\numberwithin{equation}{section} 
\newcommand{\teq}{\arabic{section}.\arabic{equation}}
\newcommand{\teql}{\Alph{section}.\arabic{equation}}
\newcommand{\sqr}[2]{{\vcenter{\vbox{\hrule height.#2pt\hbox{\vrule width.#2pt
height#1pt \kern#1pt\vrule width.#2pt}\hrule height.#2pt}}}}
\newcounter{eqcount}
\renewcommand{\labelenumi}{{{\rm (\teq \alph{enumi})}}} 
\newenvironment{edesc}{\refstepcounter{equation}\begin{enumerate}}%
{\end{enumerate}}
\newenvironment{triv}{\refstepcounter{equation}\begin{list}%
{{\hbox{\rm(\teq)\ }}} \item }{\end{list}}
\newenvironment{trivl}{\refstepcounter{equation}\begin{list}%
{{\hbox{\rm(\teql)\ }}} \item }{\end{list}}
\newcommand{\ring}[1]{{\mathbb #1}}
\newcommand\bZ{{\ring{Z}}}
\newcommand\bC{{\ring{C}}} 
\newcommand\bF{{\ring{F}}} \newcommand\bQ{{\ring{Q}}}
\newcommand{\csp}[1]{{\mathbb #1}}
\newcommand{\tsp}[1]{{\mathcal #1}}
\newcommand{\prP}{\csp{P}}
\newcommand{\afA}{\csp{A}}
\newcommand{\sC}{{\tsp{C}}} 
\newcommand{\sO}{{\tsp{O}}} 
\newcommand{\sQ}{\tsp{Q}}
\newcommand{\sP}{{\tsp {P}}} 
\newcommand{\sT}{{\tsp {T}}} \newcommand{\sH}{{\tsp {H}}}
\newcommand{\sX}{{\tsp {X}}} 
 \newcommand{\sD}{{\tsp {D}}}
\newcommand{\sG}{{\tsp {G}}} 
\newcommand{\sR}{{\tsp {R}}} 
 \newcommand{\bN}{{\csp {N}}}
\newcommand{\eql}[2]{{\rm (\ref{#1}\ref{#2})}} 
\newcommand{\vect}[1]{{\pmb #1}} 
 \newcommand{\bg}{\vect{g}}
\newcommand{\bp}{{\vect{p}}} 
\newcommand{\bv}{{\vect{v}}} 
 \newcommand{\bz}{{\vect{z}}}
\newcommand{\row}[2]{{#1_1,\ldots,#1_{#2}}}
\newcommand{\rowb}[3]{{#1_{#2},\ldots,#1_{#3}}}
\newcommand{\smatrix}[4]{{\big(\begin{array}{cc}
\!\lower2pt\hbox{$\scriptstyle#1$} &\lower2pt\hbox{$\scriptstyle#2$}\!
\\\! \raise2pt\hbox{$\scriptstyle#3$} &\raise2pt\hbox{$\scriptstyle#4$}
\!\end{array}\big)}}
\newcommand{\texto}[1]{{\textr{#1}}}
\newcommand{\GL}{\texto{GL}} \newcommand{\SL}{\texto{SL}}
 \newcommand{\ind}{\texto{ind}}
\newcommand{\PSL}{\texto{PSL}} \newcommand{\PGL}{\texto{PGL}}
 \renewcommand{\ni}{\texto{Ni}}
\newcommand{\textr}[1]{{\text{\rm #1}}}
\newcommand{\tr}{\textr{tr}} 
\newcommand{\abs}{\textr{abs}}  \newcommand{\cyc}{\textr{cyc}}
 \newcommand{\inn}{\textr{in}}
 \newcommand{\Aut}{\textr{Aut}}
\newcommand{\pr}{\textr{pr}}
\newcommand{\norm}{{\triangleleft\,}}
\newcommand{\BCL}{{\text{\rm Branch Cycle Lemma}}}
\newcommand{\rd}{\texto{rd}}
\newcommand{\ot} {\otimes}
\newcommand{\textb}[1]{{\text{\bf #1}}}
\newcommand{\bfC}{{\textb{C}}}
\newcommand{\longmapright}[2]{\smash{\mathop{\hbox to
#2pt{\rightarrowfill}}\limits^{#1}}}
\newcommand{\longmapleft}[2]{\smash{\mathop{\hbox to
#2pt{\leftarrowfill}}\limits^{#1}}}
\newcommand{\mapright}[1]{\smash{\mathop{\longrightarrow}\limits^{#1}}}
\newcommand{\np}{{+}}   \newcommand{\nm}{{-}}
\newcommand{\lrang}[1]{{\langle #1\rangle}}
\newcommand{\eqdef}{\stackrel{\text{\rm def}}{=}}
\newfont{\sevenrm}{cmr7}
\newfont{\bsevenrm}{cmbx7}
\newfont{\mathseven}{cmsy7}
\newfont{\bigmath}{cmsy10 scaled 1200}
\newfont{\fiverm}{cmr5}
\newfont{\bfiverm}{cmbx5}
\newfont{\hel}{cmbx10 scaled 1200}
\newfont{\eu}{eufb10}
\newfont{\sseu}{eufm5}
\newfont{\seu}{eufm7}
\newfont{\Cal}{cmmib10}
\newfont{\sCal}{cmmib7}
\newfont{\zch}{eusb10}
\theoremstyle{plain}
\newtheorem{thm}{Theorem}[section] 
\newtheorem{lem}[thm]{Lemma}
\newtheorem{princ}[thm]{Principle}
\newtheorem{prop}[thm]{Proposition}
\newtheorem{cor}[thm]{Corollary}
\theoremstyle{definition}
\newtheorem{defn}[thm]{Definition}
\newtheorem{exmp}[thm]{Example}
\newtheorem{guess}[thm]{Conjecture}
\newtheorem{quest}[thm]{Question}
\newtheorem{prob}[thm]{Problem}
\theoremstyle{remark}
\newtheorem{rem}[thm]{Remark}
\newcommand{\xs}{\times^s\!}
\newcommand{\wsp}{{$\,$---$\,$}} 
\def\pic #1 by #2 (#3){\vbox to #2{\hrule width 
#1 height 0pt depth 0pt\vfill\special{picture #3}}}
\def\scaledpicture#1
\newcommand{\comm}[1]{{}}
\begin{document} 

\newcommand{\sB}{{\tsp {B}}}
\newcommand{\sN}{{\tsp {N}}}
\newcommand{\bga}{{\pmb\gamma}}
\newcommand{\Int}{{\text{\rm Int}}}
\renewcommand{\phi}{\varphi}
\newcommand{\Fr}{\text{Fr}}
\newcommand{\sW}{\tsp W}
\newcommand{\WP}{{\tsp {W}\tsp {P}}} 
\newcommand{\med}{\text{med}}
\newcommand{\pu}{\text{pu}}
\newcommand{\emed}{\text{e-med}}
\newcommand{\D}{{\text{DP}}}
\newcommand{\C}{{\text{C}}}
\newcommand{\CM}{\text{CM}}
\newcommand{\psigma}{{\pmb \sigma}}
\newcommand{\ab}{{{}_{\text{\rm ab}}}}
\keywords{Covers of projective varieties, fiber products and correspondences, canonical permutation 
representations, exceptional covers, Davenport pairs, Serre's Open Image Theorem, Riemann's existence theorem, the
genus zero problem, zeta functions and Poincar\'e series}

\title[The place of exceptionality]{The place of exceptional covers \\ among all diophantine relations} 

\author[M.~Fried]{Michael D.~Fried} 
\email{mfri4@aol.com, mfried@math.uci.edu}

\begin{abstract}
Let $\bF_q$ be the order $q$ finite field. An   $\bF_q$  cover $\phi:X\to Y$ of absolutely irreducible
normal varieties has a {\sl nonsingular locus}. Then, $\phi$ is {\sl 
exceptional\/} if it maps one-one on
$\bF_{q^t}$ points for $\infty$-ly many
$t$ over this locus. Lenstra
suggested a curve $Y$  may have an {\sl Exceptional (cover) Tower\/} over 
$\bF_q$ \cite{Lenstra}.   We construct it, and its  
canonical limit group and permutation representation, in general. We know all one-variable
tamely ramified rational function exceptional covers, and much on wildly ramified one variable polynomial exceptional
covers, from \cite{FGS}, \cite{GMS} and \cite{LMT}.  We  use  exceptional towers to form subtowers  
from any  exceptional cover collections. This gives us a language for separating known results from
unsolved problems. 

We generalize exceptionality to p(ossibly)r(educible)-exceptional covers by    
dropping irreducibility of $X$.  {\sl
Davenport pairs\/} (DPs) are significantly different covers of $Y$ with the same ranges (where maps are
non-singular) on 
$\bF_{q^t}$ points  for $\infty$-ly many $t$. If the range values  have the same multiplicities, we have an 
{\sl iDP}.  We show how a  pr-exceptional correspondence on $\bF_q$ covers characterizes a Davenport pair.    

You recognize exceptional covers and iDPs from their {\sl extension of
constants\/}  series. Our topics include some of their dramatic effects:  
\begin{itemize} \item How they produce universal  {\sl relations\/} between Poincar\'e series. 
\item  How  they relate to the Guralnick-Thompson genus 0 problem and to Serre's open image
theorem. \end{itemize} 
Historical sections capture Davenport's  late '60s desire to deepen ties between exceptional covers,  
their related  cryptology, and   the Weil
conjectures. 
\end{abstract}

\thanks{Acknowledgement of support from NSF \#DMS-0202259. These observations revisit 
topics from my first years as a mathematician, even from my graduate school conversations around A.~Brumer,
H.~Davenport, D.J.~Lewis, C.~MacCluer, J.~McLaughlin and A.~Schinzel. Reader: If you have an interest in exceptional
covers, and the use of towers in this paper, please note I gave Lenstra credit for his verbal (only) suggestion of an
exceptional tower in \cite{Lenstra}, and then read Rem.~\ref{toLenstra}. I borrowed the name i(sovalent)DP for the
special Davenport pairs of \S\ref{iDPint} from \cite{ABl}}
\maketitle 

\tableofcontents

\newcommand{\sing}{\text{\rm sing}}

\section{Introduction and historical prelude} The pizzazz in a canonical tower of exceptional
covers comes from group theory. \S\ref{introGp} explains that and my main results. Then,
\S\ref{prehist} uses the history of  exceptional covers to introduce notation 
(\S\ref{notation}). The main topic here is pr-exceptional covers with their 
pure covering space interpretation. I call its encompassing domain the {\sl monodromy method}. Its virtues 
include success with old problems and  interpretative flexibility, through additions to Galois
theory.  

I call zeta
function approaches to diophantine questions the {\sl representation method\/}. They come from  
representations of the Frobenius on cohomology.   In the 70's I connected the
monodromy and representation methods through particular problems (around  \cite{galStrat} based on Galois
stratification and \cite{FrGGCM} based on Hurwitz monodromy). Witness the general zeta function topics of
\cite[Chaps.~30-31]{FrJ4} (\cite[Chaps.~25-26]{FrJ}). Then, both subjects were still formative and used  different
techniques. The former  analyzed  spaces of covers through intricate group theory. The latter used abstract group
theory and mostly eschewed spaces. 

Now we have  {\sl Chow motives\/}, based much on Galois
stratification (\cite{DL}, \cite{nicaise}). These directly connect monodromy and representation methods. 
Worthy monodromy problems help hone topics in  Chow motives.  \cite{exceptTow} extends these to Chow
motives/zeta function problems while keeping us on the mathematical earth of pr-exceptional covers. 

\newcommand{\ns}{\text{\rm ns}}

\subsection{Results of this paper} \label{introGp} Let 
$K$ be any perfect field (usually a finite field or number field). Let 
$\phi: X\to Y$ be a degree $n$ cover (finite flat morphism) of {\sl absolutely irreducible\/} varieties  
(irreducible over the algebraic closure $\bar K$ of $K$) over $K$. They need not be projective; {\sl
quasiprojective\/} (locally open in a projective variety) suffices (see \cite[Part I]{Mum} for basics on
varieties). We assume from here that both are normal: Defined locally by integral domains 
integrally closed in their fractions.  Here is our definition of {\sl exceptionality\/} of $\phi$.
Let
$Y'$ be any Zariski open $K$ subset of $Y$ over which
$\phi$ restricts (call this
$\phi_{Y'}$) to a cover, $\phi^{-1}(Y')\to Y$, of  nonsingular varieties. The maximal 
{\sl nonsingular\/} locus for $\phi$, $Y_\phi^\ns$, is the complement of this set: the image of singular points of
$X$  union with singular points of $Y$. 

\begin{defn} \label{excDef} Call $\phi$ {\sl exceptional\/} if for some  $Y'$,  $\phi_{Y'}$  
is one-one on $\bF_{q^t}$ points for $\infty$-ly many $t$. Cor.~\ref{T2excCh} shows exceptionality is independent of 
$Y'$. For maps of normal curves, no choice of $Y'$ is necessary. \end{defn} 

From a cover of normal varieties we get an  {\/arithmetic\/} Galois
closure (\S\ref{fibprod})  $\hat
\phi:\hat X \to Y$. The geometric Galois closure,  $\ab\phi: \ab \hat X\to Y$, is the same construction done over
$\bar K$.   This gives two groups: Its geometric, $G_\phi=G(\ab \hat X/Y)$,  and
arithmetic, $\hat G_\phi=G(\hat X/ Y)$, {\sl monodromy groups\/}  (\S\ref{galClosure}). The former is a subgroup
of the latter.  The difference between the two groups is the result of {\sl extension of
constants\/}, the algebraic closure of $K$ in the Galois closure over $K$ is larger than $K$. Also, $\hat X$ is
absolutely irreducible if and only if $G_\phi=\hat G_\phi$. 

\cite{FrGGCM} phrased an extension of constants 
problem as generalizing complex multiplication. Several results used that 
formulation (for example, \cite{FrVAnnals} and \cite{GMS}). We 
refine it here to construct from any (degree
$n$)   
$\phi: X\to Y$  an  {\sl extension of
constants\/} series 
$\hat K_\phi(2)\le \hat K_\phi(3) \le \cdots \le \hat K_\phi(n\nm 1)$ (\S
\ref{galClosure}). 

Each $\hat K_\phi(k)$ is Galois over $K$  and its group has a canonical faithful permutation
representation $T_{\phi,k}$. Exceptional covers are at one extreme, dependent only on $\hat
K_\phi(2)/K$.  For $K$ a finite field, Lift Princ.~\ref{liftPrinc} (see Cor.~\ref{T2excCh}),  characterizes
exceptionality:    
$G(\hat K_\phi(2)/K)$ fixes no points under $T_{\phi,2}$. 

Such a  $\phi$ produces a transitive permutation representation $T_\phi: \hat G_\phi\to S_{T_\phi}$ on cosets of
$\hat G_\phi(1)=G(\hat X/X)$ in
$\hat G_\phi$:  $S_{T_\phi}$ denotes all permutations of these cosets.  We can identify $S_{T_\phi}$ 
(noncanonically)  with the symmetric group $S_n$ on $\{1,\dots,n\}$. This paper emphasizes canonical
construction of a certain infinite projective system of absolutely irreducible covers of $Y$ over $K$: $$\{\phi_i:
X_i\to Y\}_{i\in I}.$$ 

Such a projective system  gives projective
completions (limit groups) $\hat G_I \ge G_I$ with an associated (infinite) permutation representation. Essential to
a projective system is that for any two of its covers, another cover in it dominates both. Our absolute
irreducibility constraint  is serious. For two covers $\phi_i: X\to Y$,
$i=1,2$, to fit in any canonical projective system requires their fiber product $X_1\times_YX_2$ 
 have a unique absolutely irreducible factor over $K$ (see
\S\ref{fibProdpts}). 

To be truly canonical,  there should be at most one map between any two covers in the system. So,
such infinite canonical projective systems of absolutely irreducible covers over a field $K$
are rare. Here, though, is one. For $n$ prime to the
characteristic of $K$, and $\zeta_n$ any primitive $n$th root of 1, let
$C_n=\{\zeta_n^j,1\le j\le n\}$. Consider 
$\sT^\cyc_{\prP^1_y,K}\eqdef\{x^n\}_{\{n\mid K\cap C_n=\{1\}\}}$. The corresponding covers are $\prP^1_x\to
\prP^1_y=Y$ by
$x\mapsto x^n$ (notation of \S\ref{notation}). 

For any
finite field, $\bF_q$ this represents the tiny {\sl cyclic subtower\/} of  the whole exceptional tower
$\sT_{\prP^1_y,\bF_q}$ of $(\prP^1_y,\bF_q)$ (Prop.~\ref{excFibProp}). This category with fiber products includes 
all exceptional covers of $\prP^1_y$ over
$\bF_q$. It captures the whole subject of exceptionality, giving 
empyreal drama to a host of new problems. 

If you personally research (or just like) exceptional covers \wsp they are the nub of any public key-like 
cryptography (\S\ref{excScram} and \S\ref{prodExcCov})\wsp your special likes or expertises  will appear as subtowers
of the full tower. Examples, like the Schur and Dickson
subtowers of
\S\ref{SchurTow} and \S\ref{dickson},  clarify definitions of subtowers and their limit groups. 

Exceptional covers have practical uses outside cryptography. Here are three  
using rational function exceptional covers: Respectively in \S\ref{restGen0tame}, \S \ref{indecRat1} and 
\S\ref{cryptexc}.  
\begin{edesc} \item Producing $f\in K(x)$ (rational functions with $K$ a number field or finite
field) indecomposable over $K$, but decomposable over $\bar K$.
\item Interpreting Serre's O(pen)I(mage)T(heorem) as properties of exceptional rational functions. 
\item Creating general  {\sl relations\/} between zeta functions. 
\end{edesc}   

These applications motivate the questions we've posed in 
\S\ref{serreOpenIm}. Classical number theorists answered these  questions for the subtowers of \S\ref{classSubTows}.
So,
\S\ref{serreOpenIm} is an introduction to \cite{exceptTow}  and the full context for problems
posed in   
\S\ref{restGen0tame} (subtowers from modular curves) and \S\ref{wildRamExc} (subtowers with wild ramification).
There are two distinct ways  a given curve over a
number field could produce many tamely ramified exceptional covers of the projective line over finite fields. One is
 from reduction of covers that satisfy an exceptionality criterion according to Chebotarev's density
theorem. Another is less obvious, but it is through the reduction of curves that have the {\sl median value
property\/} (\S\ref{nameExc}). We use \cite{Se-Cheb} and \cite{ Se-La} to tie the correct primes
of reduction to $q$-expansions of automorphic functions (\S\ref{expPrExc}, continued in \cite{exceptTow}).   
 
\S\ref{wildRamExc} outlines how to  describe the limit group of the
subtower
$\WP_{\prP^1_y,\bF_q}$ (of the exceptional tower over
$(\prP^1_y,\bF_q)$, $q=p^u$) that indecomposable polynomials, wildly ramified over $\infty$, generate. This 
suggests how to generalize \wsp even arithmetically \wsp
aspects of Grothendieck's famous theorems on curve fundamental groups. 

\S\ref{excScram} and Ques.~\ref{invFunct2} consider  
exceptional rational functions $\phi:\prP^1_x\to \prP^1_y$ as  {\sl scrambling\/} functions.  The 
combinatorics of Poincar\'e series allow us to ask how the periods of those scramblers vary as the finite field
extension changes.

The full role of exceptionality,  appears in  {\sl p(ossibly)r(educible)-exceptionality\/} 
(starting in
\S\ref{prexceptSt}). Davenport's problem  (\S\ref{davPairs})  is a special case of pr-exceptionality.
 Finally, \S\ref{prehist} and
\S\ref{monConn} take us to the historical topics started by Davenport and Lewis (\S\ref{monConna};  from whence
exceptionality sprang) and by Katz (\S\ref{monConnb}). These motivated our using the extension of constants series
to  put all these exceptional covers together.

\subsection{Primitivity and a prelude to the history of exceptionality} \label{prehist}  Most topics until
\S\ref{classSubTows} work as well for $Y$ of arbitrary dimension. We, however, understand tame exceptional covers of
curves  through the {\sl  branch cycle\/} tools of \S\ref{introBC}. These allow being constructive. 

To shorten the
paper, I limit use of branch cycles and associated {\sl Nielsen classes\/} (a bare
bones  review is in \S\ref{nc1}) to a necessary minimum. \S\ref{dickson} uses branch
cycles to give precise generators of the limit group for the Dickson subtowers. Another example is in the Nielsen
class version setup for Serre's Open Image Theorem (OIT in
\S\ref{serreOpenIm}). This approach to modular curves generalizes to form other
systems of tamely ramified exceptional covers in  \cite{exceptTow}.  App.~\ref{MullerDPs} uses
\cite{thompson} to guide the so-inclined reader to the most modern use of Nielsen classes. These  
example polynomial families from Davenport's problem  seem so explicit, it must be surprising  we can't do them
without some version of branch cycles. Finally, \S\ref{repRET} discusses how \cite{exceptTow} will use \cite{FrMez}
to replace branch cycles (Riemann's Existence Theorem) when covers wildly ramify.  Given the structure of
Prop.~\ref{excFibProp}, unsolved problems on subtowers of wildly ramified covers are a fine test  for this
method.

\subsubsection{Using primitivity in  exceptional covers} 
Let $\phi: X\to Y$ be a cover of absolutely irreducible
(normal) varieties over a field $K$. Call $\phi$ {\sl decomposable\/}
(over $K$) if it decomposes as a chain of $K$ covers $$X\mapright{\phi'} W \mapright{\phi''}
Y \text{ with
$\phi'$ and
$\phi''$ of degree at least 2}.$$ Otherwise it is {\sl indecomposable\/} or {\sl
primitive\/} (over $K$). From
\cite{FrSchur} to
\cite{FGS}, much has come from observing that the arithmetic monodromy group
(in its $\deg(\phi)$ permutation representation) is primitive if and only if the cover is
primitive. 

\begin{lem} \label{decompKtoKbar} Also, 
assume $\phi$ is totally ramified over some absolutely
irreducible $K$ divisor  (for curves a $K$ point) of
$Y$.  Then (if
$(\deg(\phi),\text{char}(K))=1$, necessary  from
\cite[Cor.~11.2]{FGS}): $\phi$ decomposes over $K$ $\Leftrightarrow$ $\phi$ decomposes over $\bar K$.  \end{lem}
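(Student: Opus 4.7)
The implication ``$\phi$ decomposes over $K \Rightarrow \phi$ decomposes over $\bar K$'' is immediate from base change, so the content of the lemma is the converse. I would translate decomposability into group theory by the principle recalled just before the statement: $\phi$ decomposes over $K$ (respectively over $\bar K$) if and only if $\hat G_\phi$ (respectively $G_\phi$) is imprimitive in its natural degree-$n$ permutation representation on the cosets of the point stabilizer. Both groups act transitively on the same $n$-element set, and $G_\phi \triangleleft \hat G_\phi$.

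The key technical input is an $n$-cycle $\sigma\in G_\phi$ produced from the ramification hypothesis. Pick a point of the arithmetic Galois closure $\hat X$ lying over the absolutely irreducible $K$-divisor $D$, and let $I\subseteq \hat G_\phi$ be its inertia group. Since extensions of the constant field are everywhere unramified, $I$ in fact lies in $G_\phi$. Tameness, which is exactly the condition $(\deg\phi,\text{char}(K))=1$, forces $I$ to be cyclic, while total ramification at a single absolutely irreducible $D$ forces $I$ to act transitively on the $n$ cosets with $|I|=n$. A cyclic group of order $n$ acting transitively on $n$ points acts as an $n$-cycle; call this cycle $\sigma$.

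Now assume $\phi$ decomposes over $\bar K$, so $G_\phi$ admits a nontrivial block system $B$ of block size $d$ with $1<d<n$. Because $\sigma\in G_\phi$ preserves $B$, the block through a fixed point $1$ must be $\sigma^{n/d}$-stable and have exactly $d$ elements; hence it equals the single $\sigma^{n/d}$-orbit through $1$. This determines $B$ from $\sigma$ and $d$ alone: $B$ is the partition of the $n$ cosets into orbits of $\sigma^{n/d}$. In particular, $G_\phi$ has \emph{at most one} block system of each block size.

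The descent step is then an appeal to normality. Any $g\in\hat G_\phi$ carries $B$ to a partition $g(B)$ which, because $G_\phi\triangleleft\hat G_\phi$, is again a block system of $G_\phi$ of the same block size $d$; by the uniqueness just proved, $g(B)=B$. So $B$ is preserved by all of $\hat G_\phi$, making the latter imprimitive, and $\phi$ decomposes over $K$. The main obstacle is the first step: one must use absolute irreducibility of $D$ and tameness together to conclude simultaneously that $I$ is cyclic of order $n$, acts as an $n$-cycle on the cosets, and lies in $G_\phi$ rather than only in $\hat G_\phi$; the need for $(\deg\phi,\text{char}(K))=1$ here matches the necessity cited from \cite[Cor.~11.2]{FGS}. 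After this ramification analysis the group-theoretic descent is essentially automatic.
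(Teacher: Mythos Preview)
Your argument is correct and is precisely the classical route: produce an $n$-cycle in $G_\phi$ from tame total ramification, use it to show $G_\phi$ has at most one block system of each size, and then descend via $G_\phi\triangleleft\hat G_\phi$. The paper does not spell out a proof but cites \cite[Prop.~3]{FrPap0}, \cite[Thm.~3.5]{FrPap1}, and \cite[Lem.~4.4]{FGS}, and what you have written is exactly the argument in those sources (the $n$-cycle forcing uniqueness of block systems is the key point of Fried's original proofs). So your proposal matches the intended approach.
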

The proofs of \cite[Prop.~3, p. 101]{FrPap0} and \cite[Thm.~3.5]{FrPap1} are readily adapted to prove this, and it  a
special case of
\cite[Lem.~4.4]{FGS}. 
  
Suppose $K$ is a number field or finite field. In the former case let $\sO_K$
be its ring of integers.  Let $k_f=k_{f,K}$ be the number of absolutely irreducible $K$ components
of 
$\prP_x^1\times_{\prP^1_z}\prP^1_x\setminus \Delta$ (\S\ref{fibprod}). So, $k_{f,\bar K}$
might be  larger than $k_{f,K}$. 
\cite{dlcong1} used exceptional to mean  $k_{f,K}$ is 0 (\S\ref{monConn}). 

Davenport and Schinzel visited University of Michigan  
in 1965--1966 (see \S\ref{expexcTowers}). They discussed many polynomial
mapping problems. This  included   Schur's 1923 \cite{SchurConj} conjecture, whose hypothesis and
conclusion are the second paragraph of Lem.~\ref{basicSchur} when $\bQ=K$   
(\cite[Thm.~1]{FrSchur}; notation from \S\ref{SchurTow}). Recall the degree $n$ Tchebychev polynomial, $T_n(x)$:  
$T_n(\frac{x+1/x}2)=\frac{x^n+1/x^n}2$ (\S\ref{dickson}). 
\begin{lem} \label{basicSchur} Suppose $f\in K[x]$ is  
indecomposable, $(\deg(f),\text{char}(K))=1$ and $k_{f,\bar K}\ne 1$. Then,   $f$ has prime degree and 

\begin{triv} \label{cycCheb}   either $\lambda_1\circ
f\circ
\lambda_2^{-1}(x)$ is cyclic  
($x^{\deg(f)}$) or Chebychev ($T_{\deg(f)}(x)$)  for some $\lambda_1,\lambda_2\in \afA(\bar K)$ (\S\ref{notation};
Prop.~\ref{ordSchurT} for precision on the $\lambda\,$s).
\end{triv}

\noindent Let  $K$ be a number field, $g\in \sO_K[x]$ (maybe decomposable). 
\begin{triv} \label{schurCov} 
\label{Qexc} Assume     
$g:\sO_K/\eu{p}\to \sO_K/\eu{p}$ is one-one for $\infty$-ly many primes $\eu{p}$. \end{triv}
\noindent Then, $g$ is a composition over $K$ of polynomials $f$ satisfying
\eqref{cycCheb}. 
\end{lem}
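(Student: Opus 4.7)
The plan is to translate the fiber-product hypothesis into group theory, reduce by classical theorems on primitive groups containing an $n$-cycle to prime degree, and finish with a Riemann--Hurwitz branch-cycle classification. Absolutely irreducible components of $\prP^1_x\times_{\prP^1_z}\prP^1_x\setminus\Delta$ over $\bar K$ correspond to orbits of the point stabilizer $G_\phi(1)$ on $\{2,\dots,n\}$ in the degree $n$ representation $T_\phi$ of the geometric monodromy $G_\phi$, so $k_{f,\bar K}\neq 1$ is equivalent to $G_\phi$ being \emph{not} doubly transitive. Because $f\in K[x]$ is totally ramified over the absolutely irreducible $K$-point $\infty$, Lem.~\ref{decompKtoKbar} upgrades $K$-indecomposability to $\bar K$-indecomposability, so $G_\phi$ is primitive; tame ramification at $\infty$ also places an $n$-cycle $\sigma_\infty$ in $G_\phi$.

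By the classical Schur--Burnside theorem that a primitive group of composite degree containing an $n$-cycle is doubly transitive, $n$ must be prime. For $n$ prime, Galois' theorem on transitive solvable groups of prime degree puts $G_\phi$ inside $AGL_1(\bF_n)=\bF_n\rtimes H$, with $H\le\bF_n^*$ of order $d\mid n-1$ and the $n$-cycle generating $\bF_n$. Which subgroups arise is then settled by Riemann--Hurwitz in genus zero: each $n$-cycle branch cycle contributes index $n-1$, each non-translation branch cycle of order $d_i>1$ has cycle type ``one fixed point plus $(n-1)/d_i$ cycles of length $d_i$'' and so contributes index $(n-1)(d_i-1)/d_i$, and $\sigma_\infty$ is already one $n$-cycle. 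Writing $a$ for the number of additional $n$-cycle branch points and $b$ for the number of non-translation branch cycles of orders $d_1,\dots,d_b$, the identity $2n-2=(a+1)(n-1)+\sum(n-1)(d_i-1)/d_i$ collapses to
\[
1 = a+\sum_{i=1}^{b}\frac{d_i-1}{d_i}.
\]
Each summand is $\ge 1/2$, so the only solutions are $(a=1,b=0)$ --- cyclic monodromy, two totally ramified branch points, $f\sim x^{\deg f}$ over $\bar K$ --- and $(a=0,b=2,d_1=d_2=2)$ --- dihedral monodromy, three branch points, $f\sim T_{\deg f}$ over $\bar K$. Prop.~\ref{ordSchurT} then pins down the $\lambda_1,\lambda_2\in\afA(\bar K)$ realizing these equivalences.

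For \eqref{schurCov}, Chebotarev's density theorem converts ``$g$ is one-one on residues for $\infty$-ly many $\eu{p}$'' into $K$-arithmetic exceptionality of $g$ (Princ.~\ref{liftPrinc} applied to residue fields of growing order). Decompose $g=g_1\circ\cdots\circ g_r$ into $K$-indecomposable polynomial factors: a composition of polynomials is one-one on a residue field iff each factor is, so $k_{g_i,K}=0$ for every $i$. If any $g_i$ were doubly transitive ($k_{g_i,\bar K}=1$), the unique non-diagonal absolutely irreducible component of its fiber product would be Galois-invariant and hence $K$-defined, forcing $k_{g_i,K}=1$ and contradicting exceptionality. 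Thus each $g_i$ meets the hypotheses of the first part and is cyclic or Chebychev up to affine changes.

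The main obstacle is the passage from the abstract data ``$G_\phi\le AGL_1(\bF_n)$ with the solved Riemann--Hurwitz count'' to the explicit polynomial identity $f\sim x^n$ or $f\sim T_n$ modulo $\afA(\bar K)$: the branch-cycle tally fixes the topology of the cover, but reconstructing an explicit rational parametrization --- and controlling the pair $(\lambda_1,\lambda_2)$ --- needs the concrete algebraic-geometric step behind Prop.~\ref{ordSchurT}, for which adapting \cite[Prop.~3]{FrPap0} or \cite[Thm.~3.5]{FrPap1} is the natural route.
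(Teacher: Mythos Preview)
Your argument follows the same route the paper indicates (via \cite{FrSchur}, with the approach sketched in the Comments on Prop.~\ref{ordSchurT} and the historical discussion): translate $k_{f,\bar K}\ne 1$ into ``geometric monodromy not doubly transitive,'' use Lem.~\ref{decompKtoKbar} together with the $n$-cycle at $\infty$ to get a primitive group containing an $n$-cycle, apply the Burnside--Schur group theorems to force prime degree and affine monodromy, and finish with Riemann--Hurwitz and the branch-cycle uniqueness that Prop.~\ref{ordSchurT} makes precise. Your second-part reduction (composition one-one $\Leftrightarrow$ each factor one-one; $k_{g_i,\bar K}=1$ would force the unique off-diagonal component to be $K$-rational, contradicting $k_{g_i,K}=0$) is also the paper's.

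One correction: in the prime-degree step you invoke ``Galois' theorem on transitive solvable groups of prime degree,'' but you have not established that $G_\phi$ is solvable, so as written this is a gap. What you actually need is Burnside's theorem: a transitive group of prime degree that is \emph{not} doubly transitive is contained in $AGL_1(\bF_n)$ (solvability is then a consequence, not a hypothesis). Galois' classical result runs in the other direction. The paper is explicit that it is ``Burnside's and Schur's group theorems'' doing this work. With that attribution fixed, your argument is complete and matches the paper's.
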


\cite{MacCluer} earlier showed that if $f\in \bF_q[x]$ gives a tame 
ramified cover over
$K=\bF_q$ with $k_{f,K}=0$,  then $f:\bF_q\to \bF_q$ is a
one-one map.    
\cite{FrThmMac} quoted \cite{MacCluer} for the name exceptional. It also showed how generally 
MacCluer's conclusion applied, to any finite cover $\phi: X\to Y$ of
absolutely irreducible nonsingular varieties  (any dimension, even if wildly ramified) satisfying the
general condition
$k_{\phi,K}=0$.  

\subsubsection{Primitivity and grabbing a generic group} \label{primGrab} If you have ever done a crossword
puzzle, then you'll recognize this situation. You have a clue for 7 Across, a seven letter word, but you have
only filled in previously the 4th letter: ...E...: Say, the clue is \lq\lq Bicycle stunt.\rq\rq\ You will be happy
for the moment to find one word that fits, even if it isn't the precise fill for the crossword. Shouldn't that be
easier to do than to be given another letter W..E... that constrains you further? 

The lesson  is that you can't seem to \lq\lq grab\rq\rq\ a word at random, but need clues that force you to the
\lq\lq right\rq\rq\ word. That also applies to groups. They are too discrete and too different between
them. If you aren't a group theorist you likely won't easily grab a primitive, not doubly
transitive, group at random.  Exceptional covers and Davenport's problem focused group theory on a set of problems
that were the analog of having to fill a suggestive set of letters in a crossword clue.  

That tantalized John Thompson and Bob Guralnick to push to complete
solutions for a particular problem where the constraints included that the group was the
monodromy of a genus 0 cover over the complexes. \S\ref{davPairProb} and \S\ref{prodExcCov} show why examples that
were telling in the genus 0 problem (over the complexes) applied to produce an understanding of wildly ramified
covers in positive characteristic.  
The Guralnick-Thompson  genus 0 problem succeeded technically and practically. It was propitious: It took group
theory beyond the classification stage that dominated the simple group program;  yet it made much of that
classification work. 

\subsection{Notation} \label{notation}
We denote projective 1-space,  
$\prP^1$, with a specific uniformizing variable
$z$ by   
$\prP^1_z$. This decoration tracks distinct domain and
range copies of $\prP^1$. 

\subsubsection{Group notation} \label{gpNot} We use some classical algebraic groups over a field $K$: especially
affine groups and groups related to them. If $V=K^n$, then the action of $\GL_n(K)$ on $V$ produces a semi-direct
product group
$V\xs
\GL_n(K)$. Represent its elements as pairs $(A,\bv)$ so the multiplication is given by
\begin{equation} \label{leftAct} (A_1,\bv_1)(A_2,\bv_2)=(A_1A_2,(\bv_1)A_2+\bv_2).\end{equation}  Here we use a right
action of matrices on vectors. Regard this whole group as permuting elements of $V$ by the action $(A_1,\bv_1)$ maps
$\bv\in V$ to
$(\bv)A_1+\bv_1$.  If you prefer a left action of matrices on vectors, then it is convenient to write $(A,\bv)$ as
$\smatrix A {\bv} 0 1$. Then, multiplication is that  expected from matrix multiplication: 
\begin{equation} \label{rightAct}\smatrix {A_1} {\bv_1} 0 1  \smatrix {A_2} {\bv_2} 0
1=\smatrix {A_1A_2} {\bv_1+A_1(\bv_2)} 0 1. \end{equation}  Represent $\bv\in V$ as $\bigl({\bv
\atop 1}\bigr)$: $V\xs
\GL_n(K)$ permutes $V$ by left multiplication. 

A subgroup $V\xs H$ with $H\le \GL_n(K)$ is an {\sl affine\/} group. If $K$ is a finite field, it is an easy exercise
to show the action of $V\xs H$ is primitive if and only if $H$ acts irreducibly (no proper subspaces) on $V$.   

We use a special notation for   
$\afA(K)$, affine transformations
$$x\mapsto ax+b, (a,b)\in K^*\times K.$$ M\"obius transformations are $\PGL_2(K)$. We use their 
generalization  to $\PGL_{u+1}(K)$ acting on $k$-planes, $k\le u-1$, of 
$\prP^u(K)$ ($K$ points of projective $u$-space).  Denote the set of
$r$ distinct unordered points of
$\prP^1_z$ by $U_r=((\prP_z^1)^r\setminus \Delta_r)/S_r$ ($\Delta=\Delta_r$ in \S\ref{fpcomm}). Quotient by
$\PGL_2(\bC)$  acting diagonally (commuting with $S_r$ on $(\prP_z^1)^r$). If $r=4$, these $\PGL_2(\bC)$
orbits form the classic
$j$-line
$\prP^1_j$ minus
$\infty$ \!\cite[\S2.2.2]{BFr}. 

We use groups and their representations, especially permutation
representations to translate the geometry of covers. In practice, as in  \S\ref{DickMon}, our usual setup
has a subgroup $G$ of $S_n$, the symmetric group of degree $n$ with multiplications from the {\sl right\/}. Example:
For $g_1=(2\,3),g_2=(1\,2)(3\,4)\in S_4$, $(2)g_1g_2=4$ gives the effect of the product of $g_1g_2$ on 2. (Action
on the left would give $g_1g_2(2)=1$.) Abstract notation of \S\ref{crypt}  
expresses the canonical permutation representation of a cover as  
$T:G\to S_V$: $G$ acts on a set
$V$. 

Recall:  A cover is {\sl tame\/} if over its ramification locus, its inertia
groups have orders prime to the characteristic. Since we restrict our maps to avoid singular sets, on the
varieties in the cover, there is no special subtlety to this definition. 

\subsubsection{Riemann Hurwitz} \label{RiemHur} 
An element
$g\in S_n$ has an index
$\ind(g)=n-u$ where $u$ is the number of disjoint cycles in $g$. Example: $(1\,2\,3)(4\,5\,6\,7)\in
S_8$ (fixing the integer 8) has index $8-3=5$. 
Suppose $\phi: X\to \prP^1_z$ is a degree
$n$ cover
(of compact Riemann surfaces). We assume the reader is familiar with computing the genus $g_X$ of 
$X$ given a branch cycle description $\bg=(\row g r)$ for $\phi$ (\S\ref{nc1}):
$2(n+g_X-1)=\sum_{i=1}^r
\ind(g_i)$ (\cite[\S2.2]{VB} or \cite[Chap.~4]{FB}). 

\subsubsection{Frobenius progressions and fiber products} \label{frobProg} We need a precise
notation for certain types of arithmetic progressions and their unions. Let $n$ be an
integer that refers to a modulus for an arithmetic progression $$A_a=A_{a,n}=\{a+kn\mid
0\le k\in \bZ\}
\text{ with }0\le a\in \bZ.$$ Call
$A_a$ a {\sl full\/} progression if $a< n$. Given $n$, any $u\in A_a$ 
defines $A_a$ uniquely. A full {\sl Frobenius\/} progression
$F_a=F_{a,n}$ is the union of the full arithmetic progressions $\mod n$ defined by the
collection of residue classes  $a\cdot (\bZ/n)^* \mod n$.  Example: The full
Frobenius progression
$F_{2,12}$ is $A_{2,12}\cup A_{10,12}$.

\section{Fiber products and extension of  constants} 
This short section has two topics even an experienced
reader has never seen before: pr-exceptionality (\S\ref{prexceptSt}) and the extension of constants series 
(\S\ref{galClosure}).  We use fiber products for the latter. Interpreting exceptionality is an example
(\S\ref{exCheck}). 

\subsection{Fiber products} \label{fibprod} There are diophantine subtleties in our use of fiber products  (see \S\ref{fibProdpts}), 
for we remain in the
category of normal varieties. 

\subsubsection{Categorical fiber product} \label{fpcomm}
Assume $\phi_i: X_i\to Y$, $i=1,2$, are two
covers (of normal varieties) over $K$. The set theoretic fiber product has geometric points
$$\{(x_1,x_2)\mid x_i\in X_i(\bar K), i=1,2,\ \phi_1(x_1)=\phi_2(x_2)\}.$$ Even if these are curves, this will not be
normal at $(x_1,x_2)$ if  $x_1$ and $x_2$  both ramify over $Y$. The {\sl categorical\/} fiber product
of two covers here means the normalization of the result. Its components will be disjoint, normal varieties. We
retain the notation
$X_1\times_YX_2$ often used for the purely geometric fiber product. An $\bF_q$ point $x$ of
$X$ ($x\in X(\bF_q)$)  means a geometric point in $X$ with coordinates in
$\bF_q$. 

When $\phi_1=\phi_2$ has degree at least 2 the fiber product, $X\times_YX$, has at least two components 
(if $\deg(\phi)=n>1$): one the diagonal. Denote $X\times_Y X$ minus the diagonal component by
$X^2_Y\setminus \Delta$. Then, for any integer $k$, denote the $k$th iterate of  the
fiber product minus the {\sl fat diagonal\/} (pairwise diagonal components) by
$X^k_Y\setminus \Delta$. This is empty if $k>n$. There is a slight abuse in
using the symbol $\Delta$ for all $k$. 

Any $K$ component of $X^n_Y\setminus
\Delta$ is a $K$ Galois closure $\hat \phi:\hat X\to Y$ of $\phi$, unique up to $K$
isomorphism of Galois covers of $Y$. The permutation action of
$S_n$ on $X^n_Y\setminus \Delta$ gives the Galois group  $G(\hat X/Y)$ as the subgroup
fixing $\hat X$. When considering a family of
covers $\{X_s\to Y_s\}_{s\in S}$ over (even) a smooth base space $S$, only in special 
situations do we expect the Galois closure construction to work over $S$.  In
characteristic 0 (where there is a locally smooth ramification section) there is an \'etale cover  
$\hat S\to S$ over which the Galois construction does occur (Rem.~\ref{consthS}).

\begin{rem} \label{consthS}  To effect construction of a Galois closure canonically for a family of curve covers in
characteristic 0,  use for $\hat S$ the pullback to the {\sl inner\/}  Hurwitz space $\sH(G,
\bfC)^\inn$ (notation from \S\ref{nc1}) as in \cite{FrVMS}.  Practical $A_n$ examples 
are in \cite[\S A.2.4, especially Prop.~A.5]{altGps}.   A theme of
\cite{FrMez}: Expect such a $\hat S$ in
positive characteristic only if a family of projective curve covers tamely ramifies. Further,  its computation is
explicitly understood only if $|G|$ is prime to the characteristic. \end{rem}

\subsubsection{pr-exceptional covers} \label{prexceptSt} 
 Let  $Y$ be an absolutely
irreducible normal variety over $\bF_q$. Our constructions are usually over an
absolutely irreducible base. As in Def.~\ref{excDef}, consider the restriction $\phi_{Y'}$ of a
cover
$\phi$ over some open $Y'$ where it becomes a morphism of nonsingular varieties.  

\begin{defn} \label{prexcDef} A {\sl pr-exceptional\/} (pr   for {\sl possibly
reducible}) cover
$\phi: X\to Y$  is one with
$\phi_{Y'}$  surjective on $\bF_{q^t}$ points for infinitely many $t$ for any allowable $Y'$. \end{defn}

We permit $X$
to have no absolutely irreducible $\bF_q$ component. (Since it is normal, such an $X$ 
has no $\bF_q$ points.)  It is essential for Davenport pairs (\S\ref{davPairs}) to
consider cases where
$X$ may have several absolutely irreducible
$\bF_q$ components. If
$X$ is absolutely irreducible, then a pr-exceptional cover 
$\phi$ is exceptional. 

Here is a special case of \cite{FrThmMac}. In \cite{FGS} it has a group
theory proof. In our generality (allowing $Y$ of arbitrary dimension) we need the special case of
Princ.~\ref{liftPrinc} applied to exceptional covers. 

\begin{prop}[Riemann Hypothesis Proposition] \label{RHLem} Suppose $\phi: X\to Y$ is a cover of
absolutely irreducible normal varieties (over $\bF_q$). Then $\phi$ exceptional is equivalent
to each of the following. 
\begin{edesc} \label{rh} \item  \label{rha} $X^2_Y\setminus \Delta$ has no absolutely
irreducible
$\bF_q$ component. \item  \label{rhb}  For any choice of $Y'$ in Def.~\ref{excDef}, there are 
$\infty$-ly many
$t$ with $\phi_{Y'}$ surjective (and one-one) on  
$\bF_{q^t}$ points. \end{edesc} 

Let $E_\phi(\bF_q)$ be those $t$ where \eql{rh}{rha} holds with $q^t$
replacing $q$: $X^2_Y\setminus \Delta$ has no absolutely irreducible
$\bF_{q^t}$ component.  A chain  $X\mapright {\scriptscriptstyle \phi'}
X'\mapright{\scriptscriptstyle\phi''} Y$ of covers is exceptional if and  only if each cover
in the chain is exceptional. Then: 
$$E_{\phi''\circ\phi'}(\bF_q)=  E_{\phi''}(\bF_q)\cap E_{\phi'}(\bF_q).$$
\end{prop}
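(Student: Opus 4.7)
The plan is to base everything on two ingredients: Lang--Weil point-count estimates applied to $Z:=X^2_Y\setminus \Delta$ (and to $X$ itself), and the elementary bijection between $\bF_{q^t}$-points of $Z\cap(Y'\times_Y Y')$ and ordered pairs of distinct $\bF_{q^t}$-points of $X$ with common image in $Y'(\bF_{q^t})$---i.e., witnesses to failure of injectivity of $\phi_{Y'}$. Princ.~\ref{liftPrinc} is then invoked to refine ``one-one on $(1-o(1))$ of the $\bF_{q^t}$-points'' into clean one-one-and-onto statements on a Frobenius progression of $t$.

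For \eql{rh}{rha} $\Leftrightarrow$ exceptional, I would decompose the geometric components of $Z$ into $\mathrm{Gal}(\bar\bF_q/\bF_q)$-orbits. With $d=\dim Y$, an absolutely irreducible $\bF_q$-component contributes $q^{td}+O(q^{t(d-\frac12)})$ points to $Z(\bF_{q^t})$ by Lang--Weil, while a Frobenius-orbit of length $s>1$ contributes no $\bF_{q^t}$-points for any $t$ with $s\nmid t$. Hence \eql{rh}{rha} is equivalent to the existence of a full Frobenius progression (\S\ref{frobProg}) of $t$ with $Z(\bF_{q^t})=\emptyset$; restricting from $Y$ to $Y'$ affects this count only by $O(q^{t(d-1)})$, so on the same progression $\phi_{Y'}$ is injective on $Y'(\bF_{q^t})$, and this argument simultaneously shows exceptionality is independent of the choice of $Y'$. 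For \eql{rh}{rhb}, a second Lang--Weil bound applied to the absolutely irreducible $X$ gives $|X(\bF_{q^t})|=q^{td}(1+o(1))=|Y(\bF_{q^t})|(1+o(1))$; an injection must then fill $Y(\bF_{q^t})$ up to $O(q^{t(d-\frac12)})$ points, and Princ.~\ref{liftPrinc} upgrades this to strict surjectivity on an infinite subprogression.

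For the chain claim the equivalence just proved reduces everything to bijectivity on $\bF_{q^t}$-points. The inclusion $E_{\phi''}(\bF_q)\cap E_{\phi'}(\bF_q)\subseteq E_{\phi''\circ\phi'}(\bF_q)$ is immediate, since a composition of maps bijective on a common infinite progression of $\bF_{q^{ts}}$-points is bijective there. For the reverse inclusion, if $t\in E_{\phi''\circ\phi'}(\bF_q)$ then $\phi'$ is automatically injective on the relevant $\bF_{q^{ts}}$-points, so $t\in E_{\phi'}(\bF_q)$; by \eql{rh}{rhb} applied at level $t$, $\phi'$ is moreover \emph{surjective} on $\bF_{q^{ts}}$-points for an infinite subprogression of $s$. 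On this subprogression, any two points of $X'(\bF_{q^{ts}})$ with common $\phi''$-image lift to $X(\bF_{q^{ts}})$, and injectivity of $\phi''\circ\phi'$ forces them to coincide; hence $t\in E_{\phi''}(\bF_q)$. The exceptionality statement about the chain is the case $t=1$.

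The delicate step I expect is the final one in \eql{rh}{rhb}: passing from the Lang--Weil bound that the image of $\phi_{Y'}$ misses only $O(q^{t(d-\frac12)})$ of $Y(\bF_{q^t})$ to outright equality. This is where Princ.~\ref{liftPrinc} genuinely earns its keep, for the missing points would have to sit in $\mathrm{Gal}(\bar\bF_q/\bF_q)$-invariant sub-fibers above $Y(\bF_{q^t})$---a structure the Frobenius-orbit analysis of $Z$ is engineered to exclude on the progression of $t$ given by \eql{rh}{rha}.
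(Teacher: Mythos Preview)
Your Lang--Weil argument cleanly handles the equivalence of \eql{rh}{rha} with injectivity of $\phi_{Y'}$ on $\bF_{q^t}$-points for a full Frobenius progression of $t$, and your chain argument is sound once the first equivalence is available. The gap is in the surjectivity half of \eql{rh}{rhb}: your invocation of Princ.~\ref{liftPrinc} is circular. That principle takes pr-exceptionality---surjectivity on $\bF_{q^t}$-points for infinitely many $t$---as its \emph{hypothesis}, and this is exactly what you are trying to prove. Lang--Weil alone gives only $|\phi_{Y'}^{-1}(Y')(\bF_{q^t})|=|Y'(\bF_{q^t})|+O(q^{t(d-\frac12)})$, so an injection can still miss $O(q^{t(d-\frac12)})$ points of $Y'(\bF_{q^t})$; nothing in your outline excludes this, and the vague appeal in your last paragraph to ``$\mathrm{Gal}(\bar\bF_q/\bF_q)$-invariant sub-fibers'' does not close the gap.

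What the paper actually uses (via the references behind Princ.~\ref{liftPrinc}) is the group-theoretic translation, not a cardinality estimate. Condition \eql{rh}{rha} says no $G_\phi$-orbit on ordered pairs $(i,j)$, $i\ne j$, is stable under the Frobenius generator of $\hat G_\phi/G_\phi$; equivalently, no element of the coset $\hat G_{\phi,1}$ fixes two letters. Since $G_\phi$ is transitive, an elementary coset count gives $\sum_{g\in\hat G_{\phi,1}}\tr(T_\phi(g))=|G_\phi|$, so the average number of fixed letters on the coset is exactly one; ``at most one'' therefore forces ``exactly one'' (this is \eqref{galExc}). Now for any $y_0\in Y'(\bF_{q^t})$ with $t\in E_\phi(\bF_q)$, the Frobenius at $y_0$ lands in $\hat G_{\phi,t}$ and fixes exactly one letter, which \emph{is} a unique $\bF_{q^t}$-point of $X$ over $y_0$. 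That single Frobenius-on-fibers step delivers both surjectivity and injectivity at once, and it cannot be replaced by a point-count comparison. Once you have this, your chain argument goes through unchanged.
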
 

We call $E_\phi(\bF_q)$ the {\sl exceptionality set of $\phi$\/} (over $\bF_q$).
\S\ref{galClosure} restates exceptionality using the geometric-arithmetic 
monodromy groups
$(G_\phi,\hat G_\phi)$ of 
$\phi:X\to Y$. The quotient $\hat G_\phi/G_\phi$ is canonically isomorphic to the cyclic
group
$\bZ/d(\phi)$ where $d(\phi)$ defines the degree of the extension of constants field. A
 quotient $\bZ/d(X^2_\phi)$ of $\bZ/d(\phi)$ indicates precisely which values $t$ are
in $E_\phi(\bF_q)$ (Cor.~\ref{geomExcept}).  The exceptionality set
$E_\phi$ is a union of full Frobenius progressions. This extends to pr-exceptional
(Princ.~\ref{liftPrinc}): It has a Galois characterization and the
pr-exceptionality set $E_{\phi}(\bF_q)$ is a union of full Frobenius progressions. 

\subsubsection{Galois group of a fiber product} \label{galFiber} Recall the fiber
product of two surjective homomorphisms $\phi^*_i: G_i\to H$, $i=1,2$: $$G_1\times_H
G_2=\{(g_1,g_2)\in G_1\times G_2\mid \phi_1^*(g_1)=\phi_2^*(g_2)\}.$$ 

The following  hold from an equivalence of categories with fiber
product \cite[Chap.~3, Lem.~8.11]{FB}. Suppose
$\phi_i:X_i\to Y$ are two covers, with geometric (resp.~arithmetic) monodromy group
$G_{\phi_i}$ (resp.~$\hat G_{\phi_i}$), $i=1,2$. Let $\ab \hat X$ (resp.~$\hat X$) be
the maximal simultaneous quotient of $\ab \hat X_i\to Y$ (resp.~$ \hat X_i\to
Y$), $i=1,2$. Then the geometric (resp.~arithmetic) monodromy group of the fiber product
$$(\phi_1,\phi_2): X_1\times_YX_2\to Y$$ is $G_{\phi_1}\times_H G_{\phi_2}$
(resp.~$\hat G_{\phi_1}\times_H \hat G_{\phi_2}$) with $H=G(\ab \hat X/Y)$
(resp.~$G(\hat X/Y)$). Note: Determining $H$ is often the
hard part.

We now consider the natural permutation representation attached to a Galois
closure of a fiber product. Let $T_i: G_i\to S_{V_i}$, $i=1,2$, be permutation 
representations, $i=1,2$ (as in \S\ref{crypt}). These representations produce a tensor representation on the 
categorical fiber product as $T: G_1\times_HG_2\to
S_{V_1\times V_2}$ (as in \S\ref{davPairs}). 

\subsubsection{Introduction to branch cycles} \label{introBC} Now assume $Y=\prP^1_z$, the context for classical
exceptional covers. If we restrict to tame covers, then {\sl
branch cycle\/} descriptions often  figure out everything in one fell swoop.  Assume 
$\bz$ contains all branch points of both $\phi_1$ and $\phi_2$. As in
\S\ref{nc1}, branch cycles start from a fixed choice of classical generators on $U_\bz$ (we assume
this given; with
$r$ points in
$\bz$). \S\ref{algBrCyc} explains how this applies to tame  covers in positive characteristic. 

\begin{prop} \label{useBC} Assume $G_i$ is a geometric monodromy group  for
$\phi_i$,
$i=1,2$. Suppose
$\bg^i$ (resp.~$\bg$) is the branch cycle description for
$\phi_i$,
$i=1,2$ (resp.~$(\phi_1,\phi_2)$). Then,  $g_k=(g_k^1,g_k^2)$, $k=1,\dots,r$. The
orbits of $T$ on $\lrang{\bg}$ correspond to the absolutely irreducible components of the
fiber product $X_1\times_{\prP^1_z} X_2$. 
\end{prop}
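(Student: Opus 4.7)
The plan is to invoke Riemann's Existence Theorem (RET) in the form used throughout the paper: after choosing a basepoint $z_0\in U_\bz$ and fixing classical generators as in \S\ref{introBC}, there is an equivalence between finite covers of $\prP^1_z$ unramified outside $\bz$ and finite permutation representations of $\pi_1(U_\bz,z_0)$. Under this equivalence the branch cycle description $\bg^i$ is precisely the image, on the standard generators, of $\pi_1(U_\bz,z_0)\to G_{\phi_i}\hookrightarrow S_{V_i}$, and $\phi_i$ is recovered from the transitive representation $T_i$ on $V_i$.

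First I would identify the categorical fiber product $X_1\times_{\prP^1_z} X_2$ with the cover corresponding, via RET, to the diagonal permutation representation of $\pi_1(U_\bz,z_0)$ on $V_1\times V_2$. Over the open locus $U_\bz$ this is immediate from the set-theoretic description of geometric fibers, since $\bz$ contains the branch points of both $\phi_i$. The normalization performed in \S\ref{fpcomm} simply inserts the points needed to make the cover normal over all of $\prP^1_z$; being a finite birational operation, it does not change the representation nor the decomposition into connected components.

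Next I would pass from $\pi_1(U_\bz,z_0)$ to its image. By the Galois-group-of-a-fiber-product description in \S\ref{galFiber}, the geometric monodromy of $(\phi_1,\phi_2)$ is $G_{\phi_1}\times_H G_{\phi_2}$. The standard generator that maps to $g^i_k$ in each factor therefore maps to $(g^1_k,g^2_k)$ in the fiber product; this is the identity $g_k=(g^1_k,g^2_k)$ of the statement. The action of $\lrang{\bg}$ on $V_1\times V_2$ via $T$ is then exactly the permutation representation attached to $(\phi_1,\phi_2)$.

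Finally, a normal cover of $\prP^1_z$ is absolutely irreducible if and only if its geometric monodromy acts transitively on the generic fiber; this is standard and follows from the equivalence of categories used in \S\ref{galFiber}. Consequently, the absolutely irreducible components of $X_1\times_{\prP^1_z} X_2$ are in bijection with the orbits of $\lrang{\bg}$ on $V_1\times V_2$, each orbit giving a transitive subrepresentation and hence one irreducible subcover. The main obstacle I anticipate is the bookkeeping of the normalization step: verifying rigorously that the categorical (normalized) fiber product and the naive set-theoretic one have the same connected-component decomposition. This reduces to the fact that normalization of a pure-dimensional variety is finite and birational, hence preserves its connected components.
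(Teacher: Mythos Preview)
Your argument is correct and is exactly the standard one the paper has in mind. Note that the paper states this proposition without proof, treating it as a routine consequence of the RET dictionary (\S\ref{nc1}, \S\ref{algBrCyc}) together with the fiber-product-of-Galois-groups description in \S\ref{galFiber}; your write-up fills in precisely those steps. The only remark is that your worry about normalization is not a real obstacle here: since $\bz$ contains all branch points of both $\phi_i$, the restricted covers over $U_\bz$ are \'etale, so connected components are already determined there and the normalization over the branch locus cannot merge or split them.
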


\noindent Finding
$\bg$ is usually the hard part.  Prop.~\ref{branchCycleqn} has a practical
example.

\newcommand{\exc}{\text{{\rm exc}}}
\subsection{The extension of constants series} \label{galClosure} Many arithmetic
properties of covers appear from  an extension of constants in going to the Galois closure
of a cover.   
Let
$\phi:X\to Y$ be a $K$ cover, with $\deg(\phi)=n$, of absolutely irreducible (normal varieties). As
in \S\ref{fibprod}, let
$\hat 
\phi: \hat X\to Y$ be its arithmetic Galois closure with group $\hat G_\phi$. Denote the group of $\hat X\to X$ by
$\hat G_\phi(1)$, with similar notation for $\ab \hat X$. 

\subsubsection{Iterative constants}  \label{itConsts}  Let $\hat K_\phi(k)=\hat
K(k)$ be the minimal definition field of the  collection of (absolutely irreducible) $\bar
K$ components of
$X^k_Y\setminus
\Delta$, $1\le k\le n$. Then, the kernel of
$\hat G_\phi\to G(\hat K(n)/K)$ is $G_\phi$. Since $X^k_Y\setminus
\Delta$ has definition field $K$, each extension $\hat K(k)/K$
is Galois. Call it the {\sl
$k$th extension of constants field}. Further, the group $G(\hat K_\phi(k)/K)$ acts as
permutations of the absolutely irreducible components of $X^k_Y\setminus
\Delta$. Denote the corresponding permutation representation on these components by 
$T_{\phi,k}$.  

There is a natural sequence of quotients
$$G(\hat X/Y)\to G(\hat K_\phi(n)/K)\to \cdots \to G(\hat K_\phi(k)/K)
\to
\cdots
\to G(\hat K_\phi(1)/K).$$ Here 
$G(\hat K(1)/K)$ is trivial if and only if $X$ is absolutely irreducible.
As in Cor.~\ref{geomExcept} the exceptional cover topic primarily deals with the fields
$\hat K(2)$. 
We record here an immediate consequence of Prop.~\ref{RHLem}. 

\begin{cor} \label{T2excCh} For $K$ a finite field, $G(\hat K_\phi(2)/K)$ having no fixed
points under
$T_{\phi,2}$ characterizes $\phi$ being exceptional. \end{cor}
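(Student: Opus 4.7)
The plan is to read Cor.~\ref{T2excCh} off directly from Prop.~\ref{RHLem}\eql{rh}{rha} by translating the geometric statement \emph{$X^2_Y\setminus\Delta$ has no absolutely irreducible $\bF_q$ component} into a Galois-theoretic statement about $T_{\phi,2}$. No deeper machinery is needed; the work is purely bookkeeping around the definition of $\hat K_\phi(2)$.

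First I would recall the setup of \S\ref{itConsts}: by construction $\hat K_\phi(2)$ is the minimal subfield of $\bar K$ over which every absolutely irreducible $\bar K$-component of $X^2_Y\setminus\Delta$ is individually defined. Equivalently, $\hat K_\phi(2)$ is the fixed field of the kernel of the natural $G(\bar K/K)$-action on the finite set of these components. Thus this action descends to a faithful action of $G(\hat K_\phi(2)/K)$ on the components, and this is precisely the representation $T_{\phi,2}$. Next, since $K=\bF_q$ is a finite field, $G(\hat K_\phi(2)/K)$ is cyclic, generated by the Frobenius $\sigma$, so \textbf{a component is a $T_{\phi,2}$-fixed point iff it is fixed by $\sigma$}. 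Standard Galois descent on $X^2_Y\setminus\Delta$ (which is itself defined over $K$) then identifies its absolutely irreducible $\bF_q$-components with precisely those $\bar K$-components that are setwise fixed by $\sigma$: an irreducible $\bar K$-component descends to an absolutely irreducible $\bF_q$-subvariety exactly when its $G(\bar K/K)$-orbit is trivial.

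Combining the two identifications, the geometric condition \eql{rh}{rha} (no absolutely irreducible $\bF_q$-component of $X^2_Y\setminus\Delta$) becomes the group-theoretic condition that $T_{\phi,2}$ has no fixed point. Invoking Prop.~\ref{RHLem} to identify exceptionality with \eql{rh}{rha} completes the proof. The only potential wrinkle is keeping the notion of \emph{absolutely irreducible $\bF_q$ component} consistent across the two sides of the equivalence, but since we work in the normal category (and normalization commutes with base change to $\bar K$ here), $\bar K$-components are disjoint and the descent criterion applies verbatim. There is no substantive obstacle.
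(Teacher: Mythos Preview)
Your argument is correct and is exactly the translation the paper intends: the text introduces Cor.~\ref{T2excCh} as ``an immediate consequence of Prop.~\ref{RHLem},'' and your write-up simply spells out why condition \eql{rh}{rha} is equivalent to $T_{\phi,2}$ having no fixed component. There is nothing to add.
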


The only
general identity between these fields $\{\hat K_\phi(k)\}_{k=2}^n$ is in the next lemma. For
any ordered subset
$I=\{i_1<\cdots < i_k\}\subset
\{1,\dots,n\}$, denote projection of $X^n_Y\setminus \Delta$ on the coordinates
of $I$ by $\pr_I$. 

\begin{lem} The map $\pr_I: X^n_Y\setminus \Delta \to X^k_Y\setminus \Delta$ is a $K$
map. For $k=n-1$ it is an isomorphism. In particular, $\hat K_\phi(n)=\hat K_\phi(n-1)$.
\end{lem}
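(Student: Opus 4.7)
The plan is to prove the three assertions in sequence. For the first, that $\pr_I$ is a $K$-morphism, I would appeal to the universal property of the (normalized) categorical fiber product of \S\ref{fpcomm}: each $X^n_Y$ and $X^k_Y$ is a $K$-scheme built functorially from the $K$-cover $\phi$, so forgetting coordinates indexed by the complement of $I$ is a $K$-morphism on the underlying set-theoretic fiber products, and the map extends to the normalizations because the source is already normal. To see that the restriction of the extended map lands in $X^k_Y\setminus\Delta$, observe that if two coordinates among $i_1,\dots,i_k$ were to coincide, this would already realize a pairwise diagonal upstairs, contradicting removal of $\Delta$ from $X^n_Y$.

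For the isomorphism claim when $k=n-1$, my plan is to build the inverse scheme-theoretically. Heuristically, any $(x_1,\dots,x_{n-1})\in X^{n-1}_Y\setminus\Delta$ over $y=\phi(x_1)$ picks out $n-1$ of the (at most $n$) points of $\phi^{-1}(y)$, and the unique missing point completes the tuple. To algebraize this, consider the base-change morphism $X^n_Y=X^{n-1}_Y\times_Y X\to X^{n-1}_Y$, finite of degree $n$. Let $U$ be its preimage over $X^{n-1}_Y\setminus\Delta$. Inside $U$, the pullback of the fat diagonal $\Delta\subset X^n_Y$ is exactly the union of the loci $V_i=\{x_n=x_i\}$ for $i=1,\dots,n-1$; each $V_i$ is the image of the closed immersion $(x_1,\dots,x_{n-1})\mapsto(x_1,\dots,x_{n-1},x_i)$ and so maps isomorphically to $X^{n-1}_Y\setminus\Delta$, contributing degree $1$ to the finite map $U\to X^{n-1}_Y\setminus\Delta$. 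The complement $X^n_Y\setminus\Delta\subset U$ is open with closed complement $\bigcup_iV_i$, hence clopen, and is therefore itself finite over $X^{n-1}_Y\setminus\Delta$ with residual degree $n-(n-1)=1$.

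A finite morphism of degree one onto a normal target is an isomorphism (Zariski's Main Theorem, or just uniqueness of the integral closure in a given function field). Both sides of our map are normal by the convention that the categorical fiber product is normalized, so $\pr_I$ is a $K$-isomorphism for $k=n-1$. Consequently it sends the absolutely irreducible $\bar K$-components of $X^n_Y\setminus\Delta$ bijectively and $\Gal(\bar K/K)$-equivariantly to those of $X^{n-1}_Y\setminus\Delta$, so the minimal splitting field of this Galois action on components agrees on the two sides, yielding $\hat K_\phi(n)=\hat K_\phi(n-1)$.

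The delicate point, I expect, is verifying that the decomposition $U=(X^n_Y\setminus\Delta)\sqcup V_1\sqcup\cdots\sqcup V_{n-1}$ is a scheme-theoretic disjoint union of normal pieces rather than merely a set-theoretic one; this is what gives clopen-ness and hence finiteness of the restricted projection. This should be automatic in the normalized category, since the $V_i$ meet $X^n_Y\setminus\Delta$ only along strata of the un-normalized fiber product that normalization separates into distinct connected components; but I would want to confirm it carefully before invoking Zariski's Main Theorem.
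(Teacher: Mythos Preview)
Your argument is correct, but the paper's route is considerably shorter. Both proofs hinge on the same principle: a finite morphism that is generically one-one onto a normal target is an isomorphism. The difference is in how one checks the degree-one condition. The paper simply observes that off the discriminant locus of $\phi$, a point $(x_1,\dots,x_{n-1})\in X^{n-1}_Y\setminus\Delta$ lies over some $y\in Y$ with a fiber of exactly $n$ points, so the remaining point $x_n$ is uniquely determined; this gives generic injectivity of $\pr_I$ immediately, and normality of the target finishes. You instead establish the degree count globally by decomposing the pullback $U$ of $X^{n-1}_Y\setminus\Delta$ in $X^n_Y$ into $X^n_Y\setminus\Delta$ together with the $n-1$ diagonal sections $V_i$, and then subtract. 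Your approach buys a scheme-theoretic inverse valid everywhere rather than just a generic one, but at the cost of the clopen-decomposition verification you flag at the end. That verification does go through in the normalized category (the $V_i$ are pairwise disjoint because the first $n-1$ coordinates are already distinct, and normalization separates them from the remaining components), but the paper's generic argument avoids the issue entirely.
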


\begin{proof} The ordering on the coordinates of $X^n_Y\setminus \Delta$ is
defined over $K$. So, picking out any coordinates, as $\pr_I$ does, is also. Since
$X^k_Y\setminus \Delta$ is a normal variety, if $\pr_I$ is generically one-one then it is an
isomorphism. Off the discriminant locus 
points of $X^n_Y\setminus \Delta$ look like $(\row x n)$ where $\row x {n-1}$
determine $x_n$, the remaining point  over
$\phi(x_1)\in Y$. So, when $I=\{1<\cdots<n-1\}$, the map is one-one. 
\end{proof}

\begin{rem} \cite[App.~B]{exceptTow} shows how the arithmetic monodromy group of
$A_n$ covers is at the other extreme (depending solely on
${\hat K_\phi}(n\nm1)$. \end{rem}

\subsection{Explicit check for exceptionality} \label{exCheck} 
Apply the extension of constant series  
when $K=\bF_q$ and  $\hat \bF_q(k)$ is the $k$th extension of constants field. We write
$G(\hat \bF_q(k)/\bF_q)$ as $\bZ/d(\phi,k)$. 
The {\sl extension of constants\/} group is $$\hat G_\phi/G_\phi=G(\hat
\bF_{q,\phi}/\bF_q)\eqdef \bZ/d(\phi,n).$$ It
defines  $\hat \bF_q(n)=\hat \bF_{q,\phi}(n)$, the minimal field over which
$\hat X$ breaks into absolutely irreducible components. For 
$X$ absolutely irreducible, $\hat G_\phi/G_\phi=\hat G_\phi(1)/G_\phi(1)$. Any $t\in
\bZ/d(\phi,n)$ defines a $G_\phi$ coset $\hat G_{\phi,t}\eqdef G_\phi \bar t$, $\bar t\in
\hat G_\phi$ with $\bar t\mapsto t$.   

\subsubsection{Using equations} 
If $\phi$ is exceptional,  then \eql{rh}{rha}  visually  gives  
 $E_\phi(\bF_{q^t})$ for any integer $t$. List the
irreducible $\bF_q$ components  of $X^2_Y\setminus \Delta$ as $\row V
u$.   

\begin{cor} \label{geomExcept}  Exceptionality of $\phi$ holds if and only if each $V_i$
breaks into
$s_i$ components, conjugate over $\bF_q$, with $s_i>1$, $i=1,\dots, u$, over $\bar \bF_q$.
Denote 
  $\text{\rm lcm}(\row s u)$ by $d(\phi,2)$. Restrict elements of $\hat G_\phi$ to
$\bF_{q^{d(\phi,2)}}\subset \bF_{q^{d(\phi)}}$ to induce $\hat G_\phi(1)/G_\phi(1)\to
\bZ/d(\phi,2)$.  Then,
$E_\phi(\bF_q)$ is the union of  $t\in \bZ/d(\phi,2)$ 
not divisible by $s_i$ for any $1\le i\le u$. So, all 
$t\in (\bZ/d(\phi,2))^*$ (or in  $(\bZ/d(\phi,n))^*$) are in $E_\phi(\bF_q)$. 
\begin{triv} \label{galExc} A 
$t\in \bZ/d(\phi,n)$ is in $E_\phi(\bF_q)$ precisely when each $g\in \hat
G_{\phi,t}$ fixes (at least, or at most) one integer from $\{1,2,\dots,n\}$. \end{triv}
\end{cor}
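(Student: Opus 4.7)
The plan is to translate each claim of Cor.~\ref{geomExcept} through two dictionaries: the Galois correspondence between geometric components of $X^2_Y\setminus\Delta$ and $G_\phi$-orbits on $\{(i,j):i\ne j\}$, and the cyclic action of $\text{Gal}(\bar\bF_q/\bF_q)$ on the geometric pieces of each $\bF_q$-irreducible component $V_i$.

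First I would establish the main biconditional via Prop.~\ref{RHLem}\eql{rh}{rha}: $\phi$ is exceptional iff no $V_i$ is absolutely irreducible. Because $V_i$ is $\bF_q$-irreducible, $\text{Gal}(\bar\bF_q/\bF_q)$ permutes its geometric pieces in a single transitive cyclic orbit of length $s_i$, so $V_i$ is absolutely irreducible iff $s_i=1$. The pieces of $V_i$ are defined over $\bF_{q^{s_i}}$, and their compositum over $i$ is $\bF_{q^{d(\phi,2)}}=\hat\bF_q(2)$; the stated map $\hat G_\phi(1)/G_\phi(1)\to\bZ/d(\phi,2)$ is then the canonical restriction of Galois automorphisms to this subfield.

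Next I would determine $E_\phi(\bF_q)$ by base change to $\bF_{q^t}$. The geometric pieces of $V_i$ are permuted by $\text{Frob}^t$ acting as $t\bmod s_i$ on the cyclic $\bZ/s_i$-structure, so each orbit has length $s_i/\gcd(s_i,t)$; $V_i$ therefore yields an absolutely irreducible $\bF_{q^t}$ component iff some orbit has length one, iff $s_i\mid t$. Applying Prop.~\ref{RHLem} over $\bF_{q^t}$ gives $t\in E_\phi(\bF_q)$ iff $s_i\nmid t$ for every $i$. The assertion for units is immediate: $\gcd(t,d(\phi,2))=1$ forces $\gcd(t,s_i)=1$, and since $s_i>1$ this forces $s_i\nmid t$; the case $(\bZ/d(\phi,n))^*$ reduces to this because $d(\phi,2)\mid d(\phi,n)$.

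Finally, for the Galois-fixed-point characterization I would re-express the same condition through the permutation representation $T_\phi$ of $\hat G_\phi$ on $\{1,\dots,n\}$. An absolutely irreducible $\bF_{q^t}$ component of $X^2_Y\setminus\Delta$ corresponds to a $G_\phi$-orbit $O$ on $\{(i,j):i\ne j\}$ with $\bar t\cdot O=O$ setwise. The key observation is that $G_\phi$ acts transitively on $O$, so setwise stability is equivalent to: for some (hence any) $o\in O$ there is $g\in G_\phi$ with $\bar t\cdot o=g\cdot o$, i.e., $g^{-1}\bar t\in\hat G_{\phi,t}$ fixes the pair $(i,j)=o$ with $i\ne j$, i.e., some element of $\hat G_{\phi,t}$ has at least two fixed integers. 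Negating yields the \emph{at most one} version; the \emph{at least one} version follows from the surjectivity half of Prop.~\ref{RHLem}\eql{rh}{rhb}, since as $y$ varies over $Y'(\bF_{q^t})$ the Frobenius on the fiber realizes every element of $\hat G_{\phi,t}$ (up to $G_\phi$-conjugacy), its fixed-point count in $T_\phi$ equalling $|\phi^{-1}(y)(\bF_{q^t})|$. The main obstacle will be this last chain of equivalences: the transitivity trick that converts setwise stability of an orbit into an honest fixed pair within the coset $\hat G_{\phi,t}$, and the RH-type bookkeeping in Prop.~\ref{RHLem}\eql{rh}{rhb} required to bridge the \emph{at most} and \emph{at least} formulations.
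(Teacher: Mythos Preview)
Your approach is correct and is essentially what the paper intends; the paper states Cor.~\ref{geomExcept} without a separate proof, treating it as a direct unpacking of Prop.~\ref{RHLem}\eql{rh}{rha} together with the Galois action on components. Your dictionary between $\bF_{q^t}$-absolutely-irreducible components of $X^2_Y\setminus\Delta$ and $\bar t$-stable $G_\phi$-orbits on ordered pairs, and the transitivity trick converting setwise stability of an orbit into an honest fixed pair for some element of $\hat G_{\phi,t}$, is exactly the intended mechanism.

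One refinement removes the obstacle you flag at the end. Rather than deducing the \emph{at least one} direction from surjectivity plus a Chebotarev-type claim that every coset element is realized as a Frobenius over some $y\in Y'(\bF_{q^t})$ (which, as you sense, is not literally true for a fixed $t$ and forces an a-ram passage to large $t'\equiv t$), use the coset-counting identity
\[
\sum_{g\in \hat G_{\phi,t}} \tr(T_\phi(g))
\;=\; \sum_{i=1}^n \bigl|\hat G_\phi(i)\cap \hat G_{\phi,t}\bigr|
\;=\; n\cdot |G_\phi(1)|
\;=\; |G_\phi|
\;=\; |\hat G_{\phi,t}|.
\]
The second equality uses that $\hat G_\phi(1)/G_\phi(1)\twoheadrightarrow \bZ/d(\phi,n)$ (absolute irreducibility of $X$, as the paper notes just before \S\ref{exCheck}) together with transitivity of $G_\phi$ on $\{1,\dots,n\}$, so each $\hat G_\phi(i)\cap \hat G_{\phi,t}$ is a $G_\phi$-conjugate of a single $G_\phi(1)$-coset. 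This one identity makes the three conditions \emph{at most one for every $g$}, \emph{exactly one for every $g$}, and \emph{at least one for every $g$} pairwise equivalent, with no appeal to point-counting on $Y'$. This is the \lq\lq group theory proof\rq\rq\ the paper alludes to after Prop.~\ref{RHLem}, and it is what underlies the reference to Princ.~\ref{liftPrinc} in \S\ref{prvsex}.
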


\subsubsection{Rational points on fiber products} \label{fibProdpts} 
Let $\phi_i:X_i\to Y$, $i=1,2$, be two $K$ covers of (normal) curves. Consider the fiber product  
$X=X_1\times_YX_2$. Any $x\in X(\bF_{q^t})$
has image  
\begin{triv} \label{fibpts} $x_i\in X_i(\bF_{q^t})$, $i=1,2$, with  
$\phi_1(x_1)=\phi_2(x_2)$. \end{triv}

Conversely, if at least one $x_i$ doesn't ramify over $\phi_i(x_i)$, then  
 $x=(x_1,x_2)$ is the unique $\bF_{q^t}$ point over $x_i$, $i=1,2$. We now stress 
a point from Princ.~\ref{liftPrinc}.  

Assume $(\phi_1,\phi_2)$ is a Davenport
pair (DP) of curve covers and
$t\in E_{(\phi_1,\phi_2)}$. Then there is $x\in X(\bF_{q^t})$ lying over both
$x_i$ satisfying \eqref{fibpts},   even if both points ramify over the base. 
When $(\phi_1,\phi_2)$  is not a DP, the following is archetypal for counterexamples to
there being  
$x\in X(\bF_{q^t})$ when both the $x_i\,$s tamely ramify over
the base. Technically this example is a DP (two polynomial covers linearly related over $\bar
\bF_q$, but not over $\bF_q$), though not for the
$t$ we are considering. 

\begin{exmp} \label{DP-notDP} Assume $a\in \bF_q^*$ is not an $n$-power from $\bF_q$. Let
$f_1:\prP^1_{x_1}\to \prP^1_z$ map by $x_1\mapsto x_1^n$ and $f_2:\prP^1_{x_2}\to \prP^1_z$
map by $x_2\mapsto ax_2^n$. Then, the fiber product $\prP^1_{x_1}\times_{\prP^1_z}
\prP^1_{x_2}$ has no absolutely irreducible $\bF_q$ components, and so no
$\bF_q$ rational points. Still, $x_i=0$ maps to $z=0$, $i=1,2$.  It is {\sl much\/} 
more difficult to analyze this phenomenon if the ramification is wild. \end{exmp}  

\begin{rem} According to Cor.~\ref{geomExcept}, exceptionality depends only on 
group data. Let $\hat H\le \hat G_\phi$, $H=\hat H\cap G_\phi$ and $\hat H(1)=\hat
H\cap \hat G_\phi(1)$. Let $D_{\hat H}$ be the image of $\hat H(1)/H(1)$ in
$\bZ/d(\phi,2)$. Call the subgroup $\hat H$ {\sl exceptional\/} if
$H$ is transitive, and if no $s_i$ divides the order  of $D_{\hat H}$,
$i=1,\dots, u$. 
\end{rem}

\section{Pr-exceptional covers}  
\S\ref{exceptSetPR} interprets pr-exceptionality. Then, \S\ref{davPairs} relates it to DPs. 
Let $\phi:X\to Y$ be any $K=\bF_q$ cover. Though
$X$ may have several
$K$ components (some not absolutely irreducible), for each there is a Galois closure,
and a corresponding permutation representation. Together these components give a Galois closure group 
$\hat G_\phi=G(\hat X/Y)$ and a permutation representation: The direct sum of those coming from
each of the components. That is, the group acts on a set of cardinality $n=\deg(\phi)$, with orbits $\row O u$ of
respective cardinalities $(\row n u)$,  corresponding to the different
$\bF_q$ components $X_i$ of $X$. 

Denote restriction of $\phi$ to $X_i$ by $\phi_i$. The quotient $\hat G_\phi/G_\phi$ is isomorphic to $\bZ/d(\phi)$.
For each $i$ we have  $\hat G_\phi\to
\hat G_{\phi_i}$  defining a surjection $\bZ/d(\phi)\to \bZ/d(\phi_i,k_i)$, $1\le k_i\le n_i-1$, analogous to when
$X$ has one component.

\subsection{Exceptionality set for pr-exceptional covers} \label{exceptSetPR}  Use Def.~\ref{prexcDef} for
pr-exceptional covers. Comments on the proof of Princ.~\ref{liftPrinc} are handy for  
checking pr-exceptionality by going to a large $t$ and using properties on
fiber products off the discriminant locus. Call this the {\sl
a(void)-ram argument}. 
  
\subsubsection{Lifting rational points} 
The following variant on \eqref{galExc} defines $E_\phi(\bF_q)$ for $\phi$ pr-exceptional. The difference 
is removal of the phrase \lq\lq for at most one integer.\rq\rq\
\begin{triv} \label{galprExc} A 
$t\in \bZ/d(\phi,n)$ is in $E_\phi(\bF_q)$ precisely when each $g\in \hat
G_{\phi,t}$ fixes at least one integer from $\{1,2,\dots,n\}$. \end{triv}

\begin{princ}[Lift Principle] \label{liftPrinc} Suppose  $\phi:X\to Y$ is pr-exceptional and $Y'$ is chosen so
$\phi_{Y'}$ is a cover of nonsingular varieties. Then those 
$t$ with $\phi_{Y'}$ surjective on $\bF_{q^t}$ points is 
$E_\phi(\bF_q)$ union with a finite set. 
 \end{princ} 

\begin{proof}[Comments] \cite[Rem.~3.2 and 3.5]{AFH} discuss the literature and give a short formal proof for the
exceptional case. We extend that here to pr-exceptional. 

Assume $\phi: X\to Y$ is pr-exceptional over $\bF_{q^t}$. Let $Y^0$ be $Y$ minus the
discriminant locus of
$\phi$, and $X^0$ the pullback by $\phi$ of $Y^0$.  \cite[Rem.~3.9]{AFH} extends
in generality, with only notational change, the short proof of \cite[Lem.~19.27]{FrJ}  for DPs of
polynomials. This proof shows the equivalence of 
$\phi:X\to Y$ pr-exceptional over
$\bF_{q^t}$ (without assuming $X$ is absolutely irreducible)  with the following Galois
theoretic statement. 
\begin{triv} \label{GA-pr} Each  $g\in \hat
G_{\phi,t}$ fixes at least one element of $\{1,\dots,n\}$. \end{triv} Another way to say this: If each $g\in \hat
G_{\phi,t}$ fixes an integer in
$\{1,\dots,n\}$, not only is 
$\phi: X^0(\bF_{q^t})\to Y^0(\bF_{q^t})$ surjective,  so is $\phi: X(\bF_{q^t})\to
Y(\bF_{q^t})$. 

In the references cited above, everything was said for curves. \cite[Thm.~1]{FrThmMac} has the result for exceptional
covers $f:X\to Y$ where $X$ and $Y$ are copies of affine $n$-space (allowing ramification, of course), so $f$ is a
generalized polynomial map. The argument is much the same. It starts with $F_{y_0}$ in
the Galois group over $y_0\in Y(\bF_{q^t})$ that acts like the Frobenius on the residue
class field of a geometric point on the Galois closure over $y_0$. 
This argument only depends on the local analytic completion around $y_0$. So, it extends to 
any $f$  that (analytically) is a map of affine spaces. That is what we get for any 
$\phi_{Y'}$ with $Y'\subset Y_\phi^\ns$ (Def.~\ref{excDef}). 
\end{proof} 

\begin{rem} \label{subCov} The notation $E_\phi(\bF_q)$ for $\phi: X\to Y$ may be
insufficiently general for all pr-exceptional covers. Restricting $\phi$ to
a proper union $X'$ of 
$\bF_q$ components of $X$, to give $\phi':X'\to Y$, may also be pr-exceptional. Then,   
$E_{\phi'}(\bF_q)$ may be a proper subset of $E_\phi(\bF_q)$ and we call $\phi'$ a
pr-exceptional subcover of $\phi$. \end{rem}

\begin{prob}[A MacCluer-like Problem] Prop.~\ref{liftPrinc} goes through the domain of an extensive generalization
of MacCluer's Thm.~\cite{MacCluer}. When can we assert $\phi:X(\bF_{q^t})\to Y(\bF_{q^t})$ is one-one for $t\in
E_\phi(\bF_q)$, not just one-one over $Y_\phi^\ns$? 
\end{prob} 

\subsubsection{Pr-exceptionality vs exceptionality} \label{prvsex} If $\phi:X\to Y$
is pr-exceptional, then $E_\phi(\bF_q)$ in Princ.~\ref{liftPrinc} is the {\sl exceptional set\/} of $\phi$. From
comments of Princ.~\ref{liftPrinc}, when $\phi$ is exceptional we know 
each  $g\in \hat G_{\phi,t}$ fixes exactly one integer in
$\{1,\dots,n\}$. In fact, we have a characterization of the subset of those $t\in E_\phi(\bF_q)$ for which a
pr-exceptional cover acts like an exceptional cover:  $t$
with this property. 
\begin{triv} \label{practsexc} $X\ot
\bF_{q^t}$ has one absolutely irreducible
$\bF_{q^t}$ component $X'$, and restricting  $\phi$ to $X'$ gives an exceptional
cover over
$\bF_{q^t}$. \end{triv}

If $\phi$ is exceptional, then $1\in E_\phi(\bF_q)$. Ex.~\ref{DP-notDP} has a pair of covers
that is a Davenport pair, though its exceptionality set does not contain 1. Here the fiber product from this
Davenport pair produces a pr-exceptional cover
$\phi:X\to Y$ with $X$ containing no absolutely irreducible factor over $\bF_q$.  

\subsubsection{Pr-exceptional correspondences} \label{excCor} Suppose
$W$ is a subset of
$X_1\times X_2$  with the projections
$\pr_i: W\to X_i$ finite maps, $i=1,2$. Call $W$ a {\sl pr-exceptional\/} correspondence (over
$\bF_q$) if both $\pr_i\,$s are pr-exceptional.  We get nontrivial examples of pr-exceptional correspondences
that are not exceptional from
\eqref{davProp}: The fiber product from  a Davenport pair $(\phi_1,\phi_2)$ is a pr-exceptional correspondence.
Denote the   
 exceptionality set  defined by $X_1\times_Y X_2\mapright{\pr_i} X_i$, by
$E_{\phi_i}$, $i=1,2$   (\S\ref{morepr-exc}). In the Davenport pair case, $E_{\phi_1}\cap E_{\phi_2}$ is 
nonempty (as in Cor.~\ref{DavfromPr}).

If $W$ is absolutely irreducible  both 
$\pr_i\,$s are exceptional covers: $W$ is an {\sl exceptional correspondence}.
\S\ref{projForm} allows forming a common exceptional subtower
$\sT_{X_1,X_2,\bF_q}$ of both $\sT_{X_1,\bF_q}$ and of
$\sT_{X_1,\bF_q}$ consisting of the exceptional correspondences between $X_1$ and $X_2$. The exceptional
set for the correspondence is then
$E_{\pr_1}\cap E_{\pr_2}$. We
don't assume both $X_i\,$s have an exceptional cover to some particular $Y$.  

\begin{princ} \label{excCorRat} An exceptional correspondence between 
$X_1$ and $X_2$ implies $|X_1(\bF_{q^t})|= |X_2(\bF_{q^t})|$ for $\infty$-ly many $t$.  \end{princ}
Classical cryptology includes  $X_i=\prP^1_{z_i}$, $i=1,2$.  

Suppose $\phi_i: \prP^1_{z_i} \to \prP^1_z$, $i=1,2$, is exceptional. Then 
$\prP^1_{z_1} \times_{\prP^1_z} \prP^1_{z_2}$ has a unique
absolutely irreducible component,  which is an exceptional cover of  $\prP^1_{z_i}$,
$i=1,2$ (Prop.~\ref{excFibProp}).  So, \S\ref{SchurTow} produces a zoo of exceptional
correspondences between $\prP^1_{z_1}$ and $\prP^1_{z_2}$ (of arbitrary high genus). 

\subsection{Davenport pairs give pr-exceptional correspondences} \label{davPairs}
Suppose $\phi_i:X_i\to Y$, $i=1,2$, are (absolutely irreducible) covers.  The 
minimal ($\bF_q$) Galois closure $\hat X$  of both is any $\bF_q$ 
component of $\hat X_1\times_Y \hat X_2$ (\S\ref{galFiber}). The attached group $\hat G=\hat 
G_{(\phi_1,\phi_2)}=G(\hat X/Y)$ is the fiber product of $G(\hat X_1/Y)$ and $G(\hat 
X_2/Y)$ over the maximal $H$ through which they both factor. 
Its absolute version is $G=G_{(\phi_1,\phi_2)}$. 

\subsubsection{DPs and pr-exceptionality}  
Both $G$ and $\hat G$  have 
permutation representations, $T_1$ and $T_2$ coming from those of $G(\hat
X_i/Y)$, $i=1,2$. This induces the tensor product $T_1\otimes T_2$ of $T_1$ and $T_2$, a permutation representation
on $\hat G$.  The cyclic group 
$$\hat  G_{(\phi_1,\phi_2)}/ G_{(\phi_1,\phi_2)}=G(\hat 
\bF_{q,(\phi_1,\phi_2)}/\bF_q) $$ is $\bZ/d$:  
$d=d(\phi_1,\phi_2)$ is the {\sl extensions of 
constants\/} degree. For $t\in \bZ/d$, denote the $G_{(\phi_1,\phi_2)}$ coset mapping to 
$t$ by $\hat G_{(\phi_1,\phi_2),t}$. 

We modify Def.~\ref{excDef} to 
define  a {\sl Davenport pair\/} (DP). Assume $Y'$ is a Zariski open $K$ subset of $Y$
so $(\phi_1,\phi_2): X_1\times_YX_2\to Y$ restricts over $Y'$ to a cover of nonsingular algebraic sets ($Y'\subset
Y_{(\phi_1,\phi_2)}^\ns$; see Rem.~\ref{fiberNonSing}). 

\begin{defn} \label{davDef} Then, $(\phi_1,\phi_2)$ is a DP if we get equality of the ranges of $\phi_{i,Y'}$  on
$\bF_{q^t}$ points, $i=1,2$, for $\infty$-ly many $t$. \end{defn}  We show equivalence of these
conditions:  
\begin{edesc} \label{eqDP} \item \label{eqDPb}  $X_1\times_Y X_2
\mapright{\pr_{X_i}} X_i$, is pr-exceptional, and the exceptionality sets  $E_{\pr_i}(\bF_q)$, $i=1,2$, have
nonempty (so infinite) intersection 
$$E_{\pr_1}(\bF_q)\cap E_{\pr_2}(\bF_q)\eqdef E_{\phi_1,\phi_2}(\bF_q); \text{ and}$$  
\item \label{eqDPa} 
$(\phi_1,\phi_2)$ is a DP (independent of the choice of $Y'$).   
\end{edesc} 

The following is a corollary of Princ.~\ref{liftPrinc}. Again let $Y'$ as above be given, and denote its pullback
to $X_1\times_YX_2$ by $(\phi_1,\phi_2)^{-1}(Y')$, etc. 
\begin{cor} \label{DavfromPr} Either property of \eqref{eqDP} holds for  $(\phi_1,\phi_2)$ if and only if the other
holds. If
\eqref{eqDP}, then,
$t\in E_{(\phi_1,\phi_2)}(\bF_q)$ and  $x_i\in \phi_i^{-1}(Y')(\bF_{q^t})$, $i=1,2$,  with
$\phi_1(x_1)\!=\!\phi_2(x_2)$ implies there is $\!x\in (\phi_1,\phi_2)^{-1}(Y')(\bF_{q^t})$ with $\pr_i(x)=x_i$,
$i=1,2$: 
\begin{equation} \label{rangeEqual} \phi_1(\phi_1^{-1}(Y')(\bF_{q^t}))= \phi_2(\phi_2^{-1}(Y')(\bF_{q^t})). 
\end{equation} The set of $t$ for which \eqref{rangeEqual} holds is $E_{\phi_1,\phi_2}(\bF_q)$ union a finite
set. 

\noindent Further, both conditions of \eqref{eqDP} are equivalent to there being  $t_0\in \bZ/d(\phi_1,\phi_2)$ 
so
\begin{triv} \label{davProp}   $\tr(T_1(g))> 0$ if and only if $\tr(T_2(g))>0$ for all $g\in
\hat G_{(\phi_1,\phi_2),t_0}$. 
\end{triv} \end{cor}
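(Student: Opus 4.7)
The plan is to prove the triple equivalence \eqref{eqDP}(a)$\Leftrightarrow$\eqref{eqDP}(b)$\Leftrightarrow$\eqref{davProp} by combining Princ.~\ref{liftPrinc} (which identifies pr-exceptionality with a fixed-point condition in the arithmetic Galois closure) with Chebotarev density in $\hat G_{(\phi_1,\phi_2)}$. The fiber-point content and the Galois-theoretic content then match up on the nose.

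First, I would prove \eqref{eqDP}(a)$\Leftrightarrow$\eqref{eqDP}(b) by an $\bF_{q^t}$-point chase over $(\phi_1,\phi_2)^{-1}(Y')$, where the categorical fiber product agrees with the set-theoretic product. Pr-exceptionality of $\pr_1\colon X_1\times_Y X_2\to X_1$ at $t$ says every $x_1\in\phi_1^{-1}(Y')(\bF_{q^t})$ has an $\bF_{q^t}$ lift $(x_1,x_2)$, equivalently $\phi_1(x_1)\in\phi_2(\phi_2^{-1}(Y')(\bF_{q^t}))$; i.e., $\phi_1$'s range lies in $\phi_2$'s over $Y'$. The symmetric statement for $\pr_2$ gives the reverse containment. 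A common $t\in E_{\pr_1}\cap E_{\pr_2}$ therefore yields the range equality \eqref{rangeEqual} along with the explicit lift of any matching pair $(x_1,x_2)$; Princ.~\ref{liftPrinc} absorbs the finitely many ramification-related $t$'s into the ``finite set'' caveat. Reversing the chase gives \eqref{eqDP}(b)$\Rightarrow$\eqref{eqDP}(a).

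Next, for \eqref{eqDP}(a)$\Leftrightarrow$\eqref{davProp}, unpack pr-exceptionality of $\pr_i$ through \eqref{galprExc}. The arithmetic Galois closure of $X_1\times_YX_2\to Y$ is $\hat X$ with group $\hat G=\hat G_{(\phi_1,\phi_2)}$, and the fiber of $\pr_1$ over $x_1\in X_1$ is in bijection with $\phi_2^{-1}(\phi_1(x_1))$, on which $\hat G$ acts via $T_2$. Consequently $\pr_1$ is pr-exceptional at $t_0$ iff every $g\in \hat G_{(\phi_1,\phi_2),t_0}$ with $\tr(T_1(g))>0$ (the condition for $g$ to be Frobenius at some $y$ in the range of $\phi_1$) also satisfies $\tr(T_2(g))>0$; interchanging roles gives the symmetric statement for $\pr_2$. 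The conjunction of both implications is precisely the biconditional \eqref{davProp}. Chebotarev density in $\hat G$, applied for $t$ in the full Frobenius progression of $t_0$ modulo $d(\phi_1,\phi_2)$, shows that the Frobenius elements $F_y$ at $y\in Y(\bF_{q^t})$ equidistribute over conjugacy classes of the coset $\hat G_{(\phi_1,\phi_2),t_0}$, converting the trace biconditional into the range equality \eqref{rangeEqual}.

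The hard part is correctly identifying the permutation representation governing each projection: a lift of $x_1\in X_1$ to the fiber product is the choice of an $\bF_{q^t}$-preimage of $\phi_1(x_1)$ under $\phi_2$, forcing $T_2$ (not $T_1$) to govern $\pr_1$ and vice versa. Once this index swap is absorbed, the Galois-theoretic translation is routine. A subsidiary technical point is the discrepancy between surjectivity on $\bF_{q^t}$-points of $Y$ versus of $Y'$, which accounts for the ``union a finite set'' clause via Princ.~\ref{liftPrinc}, since fibers over the discriminant where normalization collapses preimages contribute only finitely many exceptional $t$.
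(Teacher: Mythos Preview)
Your proof is correct and follows essentially the same route as the paper's. The paper is terser: it observes in one line that condition \eqref{davProp} is equivalent to $\tr(T_1\otimes T_2)(g)>0\Leftrightarrow \tr(T_i)(g)>0$ for either $i$, identifies this as pr-exceptionality of $\pr_i$ via \eqref{galprExc}, and cites \cite[Thm.~3.8]{AFH} for the DP equivalence; the range equality is then the same one-line lift you give. Your version unpacks the same content more explicitly via the $\bF_{q^t}$-point chase and Chebotarev, and your ``index swap'' remark (that $T_2$ governs $\pr_1$) makes transparent exactly the identification the paper leaves implicit in its tensor-product formulation.
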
  

\begin{proof} Condition \eqref{davProp} says $T_1\ot T_2(g_1,g_2)=T_1(g_1)T_2(g_2)>0$
if and only if $T_i(g_i)>0$ (either $i$). This is exactly pr-exceptionality for $X_1\times_Y X_2\to
X_i$. It is also exactly the Davenport pair condition as in \cite[Thm.~3.8]{AFH}. So, this is equivalent to both
conditions of \eqref{eqDP}. 

For the range equality of \eqref{rangeEqual}, with $x_1\in \phi_1^{-1}(Y')(\bF_{q^t})$ apply pr-exceptionality
to get $x\in (\phi_1,\phi_2)^{-1}(Y')(\bF_{q^t})$ over it and let $\pr_2(x)=x_2$ to get $\phi_2(x_2)=\phi_1(x_1)$.
So, $\phi_1(x_1)$ is in the range of $\phi_2$ on $\bF_{q^t}$ points, etc. 
\end{proof} 

Each DP $(\phi_1,\phi_2)$ has an 
exceptional set: 
$$E_{(\phi_1,\phi_2)}(\bF_q)\eqdef \{t \!\mod  d(\phi_1,\phi_2) \text{ with }
\eqref{davProp}\}.$$
Multiplying by $(\bZ/d(\phi_1,\phi_2))^*$ preserves $E_{(\phi_1,\phi_2)}(\bF_q)$. Call 
$(\phi_1,\phi_2)$ a {\sl strong\/} Davenport pair (SDP)  if
\eqref{davProp} holds for all $t_0\in \bZ/d$.  

\begin{rem} Suppose $\phi: X\to Y$ is pr-exceptional. If we knew the
exceptionality set
$E_\phi(\bF_q)$ always contained 1, then the condition $E_{\pr_1}\cap E_{\pr_2}$ nonempty in
\eql{eqDP}{eqDPb} would be
unnecessary. \end{rem} 

\begin{rem}[Nonsingularity of a fiber product] \label{fiberNonSing} A Davenport pair,
given $\phi_i:X_i\to Y$, $i=1,2$, uses those $Y'$ with $(\phi_1,\phi_2)$ over $Y'$ a map of
nonsingular algebraic sets. The union of any two such $Y'\,$s is such a set. For such $Y'$, both
$\phi_i\,$s restrict over
$Y'$ to be maps of nonsingular algebraic sets. Sometimes, however, the converse may not hold. Let $S$ be the
intersection of the ramification loci of $\phi_1$ and $\phi_2$ minus common components. We can assume  $Y'$
contains the complement of $S$. 
\end{rem}
 
\newcommand{\iE}{\text{\rm i-}E} 
\subsubsection{Interpreting isovalent DPs using pr-exceptionality} \label{iDPint} 
Let $\phi_i:X_i\to Y$, $i=1,2$, be a pair of
$\bF_q$ covers. Call $(\phi_1,\phi_2)$ an {\sl isovalent DP\/} (iDP) if the equivalent properties of
\eqref{idavProp} hold. Then, $j=1$ in \eql{idavProp}{idavPropa} is just
the DP condition (in \eqref{davProp}). 

Denote the fiber product
$j$ times (minus the fat diagonal) of $X_i$ over $Y$ by $X_{i,Y}^j\setminus \Delta$. Use 
notation around \eqref{davProp}. We 
(necessarily) extend the meaning of pr-exceptional: Even the target may not
be absolutely irreducible. We also limit the $Y'\,$s used in Def.~\ref{davDef}. Use  only those for
which $\hat X\to Y$, the smallest Galois closure of both $X_i\to Y$, $i=1,2$, restricts to a cover of  nonsingular
varieties over  $Y'$. Notation compatible with Def.~\ref{excDef} would have $Y'\subset Y_{\hat \phi}^\ns$.  

\begin{prop} \label{iDPconsts} For any $t\in \bZ/d(\phi_1,\phi_2)$, the following are equivalent. 
\begin{edesc} \label{idavProp} \item \label{idavPropa}  For each $1\le j\le n-1$,
$X_{1,Y}^j\setminus 
\Delta \times_Y X_{2,Y}^j\setminus \Delta$ is a pr-exceptional cover of both  
$X_{1,Y}^j\setminus \Delta$ and $X_{2,Y}^j\setminus
\Delta$ and $t$ is in the intersection of the common exceptionality sets, over all
$j$ and projections to both factors.
\item \label{idavPropb} For an allowable choice of $Y'$, $t'$ representing $t$ and any $y\in Y'(\bF_{q^{t'}})$, there
is a range equality with multiplicity:   
$$|\phi_1^{-1}(y)\cap X_1(\bF_{q^{t'}})|=|\phi_2^{-1}(y)\cap X_2(\bF_{q^{t'}})|.$$  
\item \label{idavPropc}  
$\tr(T_1(g))=\tr(T_2(g))$ for all $g\in \hat G_{(\phi_1,\phi_2),t}$. 
\end{edesc}
\end{prop}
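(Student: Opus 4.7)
The strategy is to route all three conditions through the trace/fixed-point dictionary for the common arithmetic Galois closure. Let $\hat\phi: \hat X \to Y$ be the smallest Galois closure containing both $\phi_1$ and $\phi_2$, with group $\hat G = \hat G_{(\phi_1,\phi_2)}$. For $g$ in the coset $\hat G_{(\phi_1,\phi_2),t}$, write $a_i(g) = \tr(T_i(g))$, the number of fixed points of $g$ on $\phi_i$'s generic fiber. The identity that drives everything is that the number of $g$-fixed ordered $j$-tuples of \emph{distinct} points in that fiber equals the falling factorial $a_i(g)(a_i(g)-1)\cdots(a_i(g)-j+1)$, which is strictly positive iff $a_i(g)\ge j$.

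First I would handle (c) $\Leftrightarrow$ (b). For $y \in Y'(\bF_{q^{t'}})$ with $Y' \subset Y_{\hat\phi}^\ns$, the Frobenius/Galois dictionary gives $|\phi_i^{-1}(y) \cap X_i(\bF_{q^{t'}})| = \tr(T_i(F_y))$, with $F_y$ lying in $\hat G_{(\phi_1,\phi_2),t}$ as soon as $t' \equiv t \pmod{d(\phi_1,\phi_2)}$, so (c) $\Rightarrow$ (b) is immediate. For the converse I would take $t'$ sufficiently large in the progression of $t$ and use a Chebotarev/Lang--Weil-style argument to realize, for each conjugacy class $[g]$ inside the coset, some $y \in Y'(\bF_{q^{t'}})$ with $F_y \in [g]$; equality of fiber cardinalities for all such $y$ then forces $a_1(g) = a_2(g)$ throughout the coset.

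For (a) $\Leftrightarrow$ (c) I would apply Princ.~\ref{liftPrinc} and the Galois criterion \eqref{galprExc} to each projection $\pi_i: X_{1,Y}^j\setminus\Delta \times_Y X_{2,Y}^j\setminus\Delta \to X_{i,Y}^j \setminus \Delta$. The fiber of $\pi_1$ above a $j$-tuple $w \in X_{1,Y}^j\setminus\Delta$ sitting over $y$ is precisely the set of ordered $j$-tuples of distinct points in $\phi_2^{-1}(y)$. So pr-exceptionality of $\pi_1$ at $t$ says: whenever $g$ in the coset fixes such a $j$-tuple in $\phi_1^{-1}(y)$ (i.e.\ whenever $a_1(g) \ge j$), it also fixes some $j$-tuple of distinct points in $\phi_2^{-1}(y)$ (i.e.\ $a_2(g) \ge j$). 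Pr-exceptionality of $\pi_2$ is the reverse implication. Imposing both for every $1 \le j \le n-1$ and every $g$ in the coset is manifestly equivalent to $a_1(g) = a_2(g)$ for all such $g$, which is (c).

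The main obstacle I anticipate is the bookkeeping in (a) $\Leftrightarrow$ (c): the covers $X_{i,Y}^j \setminus \Delta$ need not be absolutely irreducible, so pr-exceptionality for these reducible targets must be interpreted in the sense of Rem.~\ref{subCov}, applying \eqref{galprExc} component-by-component while keeping control of the tensor-style $\hat G$-action on the fibers. One must also confirm that the single modulus $d(\phi_1,\phi_2)$ still governs the Frobenius cosets at every level $j$ and for each projection; this follows because all the covers in play factor through $\hat X$. Finally, the convention $Y' \subset Y_{\hat\phi}^\ns$ introduced just before the statement is precisely what absorbs the finite-set exceptions in Princ.~\ref{liftPrinc} and prevents ramification-induced distortions in the fixed-point count used for (c) $\Leftrightarrow$ (b).
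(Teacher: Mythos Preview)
Your proposal is correct and follows essentially the same route as the paper's proof. The paper's argument is terser: it reads condition \eql{idavProp}{idavPropa} via the a-ram argument directly as the point-counting statement ``$y$ has $\ge j$ distinct $\bF_{q^{t'}}$-preimages in $X_1$ iff it does in $X_2$, for every $j$,'' which is \eql{idavProp}{idavPropb}, and then invokes the non-regular Chebotarev analog \cite[Cor.~5.11]{FrJ} for \eql{idavProp}{idavPropb}~$\Leftrightarrow$~\eql{idavProp}{idavPropc}; your organization routes both (a) and (b) through (c) and makes the falling-factorial fixed-point count explicit, but the content is the same.
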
  

\begin{proof} From the a-ram argument \eql{idavProp}{idavPropa}: $y\in
Y(\bF_{q^t})$ (not in the discriminant locus of $\phi_1$ or $\phi_2$) being the image of
$j$ distinct points of $X_i(\bF_{q^t})$ holds for  $i=1$ if
and only if it holds for $i=2$.  Running over all $j$, that says $y$ is achieved with the
same multiplicity in each fiber. The a-ram argument permits  $t$ large. So, 
the non-regular Chebotarev analog \cite[Cor.~5.11]{FrJ} has this equivalent to
\eql{idavProp}{idavPropc}. 
 \end{proof}

\begin{defn} Denote those $t$ giving the iDP 
property \eqref{idavProp} by $\iE_{(\phi_1,\phi_2)}$. \end{defn}

Prop.~\ref{DP-prExp} generalizes  
\cite[Thm.~4.8]{AFH}.

\begin{lem} \label{LemGind} Suppose $G$ and $\hat G$ are groups with  $G\norm \hat G$. Let 
$T_i$ be a faithful permutation  represention of $\hat G$ induced from the identity
representation on
$H_i\le G$, $i=1,2$. Suppose $\chi_{T_1}=\chi_{T_2}$ upon restriction to $G$. Then,
$\chi_{T_1}=\chi_{T_2}$ on
$\hat G$. \end{lem}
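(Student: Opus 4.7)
The plan is to exploit the Frobenius formula for induced characters together with the normality of $G$ in $\hat G$. Writing $T_i = \mathrm{Ind}_{H_i}^{\hat G}(1_{H_i})$, the induced character takes the form
\begin{equation*}
\chi_{T_i}(\hat g) \;=\; \frac{1}{|H_i|} \,\bigl|\{\,x \in \hat G : x^{-1}\hat g\, x \in H_i\,\}\bigr|.
\end{equation*}
The whole proof amounts to analyzing when this count is nonzero.

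The key observation is that $H_i \le G$ and $G \trianglelefteq \hat G$. If $\hat g \in \hat G \setminus G$, then conjugation by any $x \in \hat G$ preserves the nontrivial coset $\hat g\, G$, so $x^{-1}\hat g\, x \notin G \supseteq H_i$. The sum defining $\chi_{T_i}(\hat g)$ is therefore empty, and $\chi_{T_i}(\hat g) = 0$. This applies to both $i=1$ and $i=2$ equally, so the two characters agree (both being zero) off of $G$.

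On the complementary set $G$ itself, the hypothesis $\chi_{T_1}|_G = \chi_{T_2}|_G$ gives the equality of characters directly. Combining these two cases yields $\chi_{T_1} = \chi_{T_2}$ on all of $\hat G$. There is no serious obstacle here: once one notes that an induced character from a subgroup of a normal subgroup must vanish outside that normal subgroup, the lemma is immediate. The faithfulness hypothesis on $T_i$ is not used in this argument, which suggests it may be present merely for compatibility with the intended application (Prop.~\ref{iDPconsts} and the Davenport pair setting) rather than as a genuine ingredient of the proof.
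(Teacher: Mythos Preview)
Your proof is correct. Both you and the paper exploit the same underlying fact, but the packaging differs slightly. The paper's one-line argument invokes transitivity of induction, writing $T_i = \ind_G^{\hat G}\bigl(\ind_{H_i}^G(\mathbf{1})\bigr)$, and then appeals (tersely) to the fact that inducing from the normal subgroup $G$ forces the character to vanish off $G$. You bypass the transitivity step and go straight to the Frobenius formula for $\ind_{H_i}^{\hat G}(\mathbf{1})$, reading off the vanishing outside $G$ directly from $H_i \le G \trianglelefteq \hat G$. Your route is more self-contained and makes the mechanism explicit; the paper's route is more compressed. Your remark that faithfulness plays no role in the argument is also accurate.
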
 

\begin{proof} Since $T_i=\ind_{G}^{\hat G}(\ind_{H_i}^G({\pmb 1}))$, equality of the
inner  term representations for $i=1$ and 2 implies equality of the representations $T_1$ and $T_2$.
\end{proof}

\begin{prop} \label{DP-prExp} If $(\phi_1,\phi_2)$ is an iDP, then   $0\in
E_{(\phi_1,\phi_2)}(\bF_q)$ if and only if $(\phi_1,\phi_2)$ is an isovalent SDP:
i-$E_{(\phi_1,\phi_2)}(\bF_q)=\bZ/d(\phi_1,\phi_2)$. 

Assume now $(\phi_1,\phi_2)$ is a DP and for some $t\in
E_{(\phi_1,\phi_2)}(\bF_q)$, $X_1\times_Y X_2$ has a unique absolutely irreducible 
$\bF_{q^t}$ component $Z$.  Then, both
$X_i\to Y$,
$i=1,2$, are $\bF_{q^t}$ exceptional. If this holds for some $t\in E_{(\phi_1,\phi_2)}$,
then
$1\in E_{(\phi_1,\phi_2)}(\bF_q)$. 
\end{prop}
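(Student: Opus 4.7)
The proof splits along the two claims, leveraging throughout the Galois reformulations of DP (Cor.~\ref{DavfromPr}) and iDP (Prop.~\ref{iDPconsts}\eqref{idavPropc}) together with Lemma~\ref{LemGind}.

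\emph{Part 1.} The backward direction is immediate: $\iE_{(\phi_1,\phi_2)}(\bF_q)=\bZ/d$ gives $0\in\iE$, hence $\chi_{T_1}=\chi_{T_2}$ on the identity coset $G_{(\phi_1,\phi_2)}$, and equality of non-negative permutation characters at once forces the sign-equivalence \eqref{davProp}, so $0\in E$. For the converse, assume iDP and $0\in E$. My plan is first to establish $\chi_{T_1}|_{G_{(\phi_1,\phi_2)}}=\chi_{T_2}|_{G_{(\phi_1,\phi_2)}}$ and then invoke Lemma~\ref{LemGind}, applied with the $T_i$'s viewed as inductions from identity representations on appropriate point stabilizers lying inside $G_{(\phi_1,\phi_2)}$, to promote the character equality to all of $\hat G_{(\phi_1,\phi_2)}$, yielding $\iE=\bZ/d$. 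The main obstacle is the intermediate step: iDP supplies $\chi_{T_1}=\chi_{T_2}$ only on some coset $\hat G_{(\phi_1,\phi_2),t_0}$, and one must exploit Prop.~\ref{iDPconsts}\eqref{idavPropb}'s pointwise multiplicity equality at $t_0$, combined with the $0\in E$ sign-matching on $G_{(\phi_1,\phi_2)}$ and the cyclic structure of $\hat G_{(\phi_1,\phi_2)}/G_{(\phi_1,\phi_2)}$, to transport character equality back to the identity coset.

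\emph{Part 2.} From $t\in E_{(\phi_1,\phi_2)}$ both projections $\pr_i:X_1\times_YX_2\to X_i$ are pr-exceptional over $\bF_{q^t}$; uniqueness of the absolutely irreducible $\bF_{q^t}$-component $Z$ then promotes each restriction $\pr_i|_Z:Z\to X_i$ to an exceptional cover over $\bF_{q^t}$ via \eqref{practsexc}. To derive exceptionality of $\phi_i:X_i\to Y$ over $\bF_{q^t}$, I will first prove $Z\to Y$ is exceptional over $\bF_{q^t}$; the chain rule of Prop.~\ref{RHLem}, applied to $Z\to X_i\to Y$ together with the known exceptionality of $Z\to X_i$, then forces $X_i\to Y$ to be exceptional. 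Proving $Z\to Y$ exceptional is the hardest step: group-theoretically, the $G_{(\phi_1,\phi_2)}$-orbit $O\subset\{1,\dots,n_1\}\times\{1,\dots,n_2\}$ corresponding to $Z$ is stabilized by the Frobenius coset at $t$, and one must verify via the \eqref{galExc} fixed-point criterion that no non-geometric element in this coset fixes two distinct points of $O$; uniqueness of $Z$ combined with the DP sign-equivalence on coset $t$ is what excludes troublesome off-diagonal fixed pairs.

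For the final assertion, exceptionality of each $X_i\to Y$ over $\bF_{q^t}$ places $t$ inside the individual exceptionality sets $E_{\phi_i}(\bF_q)$ of Cor.~\ref{geomExcept}, which are unions of full Frobenius progressions closed under $(\bZ/d(\phi_i,2))^*$-multiplication. Combined with the analogous $(\bZ/d(\phi_1,\phi_2))^*$-closure of $E_{(\phi_1,\phi_2)}(\bF_q)$ and the joint DP plus individual-exceptionality data at $t$, one shifts from $t$ to $1$ to conclude $1\in E_{(\phi_1,\phi_2)}(\bF_q)$.
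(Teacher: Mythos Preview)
Your Part~1 manufactures an obstacle the paper does not face. The paper's one-line proof reads the hypothesis as $0\in\iE_{(\phi_1,\phi_2)}$, i.e., character equality $\chi_{T_1}=\chi_{T_2}$ on the geometric group $G_{(\phi_1,\phi_2)}$ via the criterion of Prop.~\ref{iDPconsts}; that is exactly the input Lemma~\ref{LemGind} needs, and the conclusion $\iE=\bZ/d$ is immediate. Your plan to ``transport character equality'' from a nonzero coset back to $G_{(\phi_1,\phi_2)}$, starting only from the sign condition, is neither carried out nor clearly feasible. In Part~2 your detour through ``$Z\to Y$ exceptional, then chain rule'' is legitimate in outline, but you never execute the step you call hardest. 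The paper's direct argument is both shorter and complete: to see $\phi_1$ is one-one on $\bF_{q^t}$-points, take $x_1\ne x_1'$ with $\phi_1(x_1)=\phi_1(x_1')=z$; the DP condition supplies $x_2\in X_2(\bF_{q^t})$ over $z$; then $(x_1,x_2)$ and $(x_1',x_2)$ are $\bF_{q^t}$-points of the fiber product, hence lie on the unique absolutely irreducible component $Z$, and both project to $x_2$ --- contradicting exceptionality of $Z\to X_2$.

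The genuine error is your final step. Closure of exceptionality sets under $(\bZ/d)^*$-multiplication cannot move an arbitrary $t$ to $1$: nothing forces $t$ to be a unit mod $d$. The paper argues on components instead. The $G(\bF_{q^t}/\bF_q)$-conjugates of $Z$ are themselves absolutely irreducible over $\bF_{q^t}$, so uniqueness of $Z$ at level $t$ forces $Z$ to be Galois-stable, hence $\bF_q$-defined; and any other absolutely irreducible $\bF_q$-component would remain absolutely irreducible over $\bF_{q^t}$, again contradicting uniqueness. Thus the unique-component hypothesis already holds at $t=1$, the main argument reruns over $\bF_q$ to give both $\phi_i$ exceptional there, and $1\in E_{\phi_1}(\bF_q)\cap E_{\phi_2}(\bF_q)\subset E_{(\phi_1,\phi_2)}(\bF_q)$.
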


\begin{proof} The first statement is from Lem.~\ref{LemGind} using 
characterization \eql{idavProp}{idavPropc}.  Now consider the second paragraph statement and for simplicity assume
we have already restricted to where $(\phi_1,\phi_2)$ is a map of nonsingular spaces. 

For such a
$t$, restricting  to $Z\to X_i$ is a pr-exceptional cover (Cor.~\ref{DavfromPr}) since the only
$\bF_{q^t}$ points on $X_1\times_Y X_2$ must be on $Z$. As $Z$ is
absolutely irreducible, Prop.~\ref{RHLem} says
$Z\to X_i$,
$i=1,2$, is exceptional. To see that $\phi_i$ is exceptional, again from Prop.~\ref{RHLem} we have only to show it is
one-one. Using the a-ram argument, it suffices to do this over the nonbranch locus of both maps. Suppose
$x_1,x_1'\in X_1(\bF_{q^t})$ and $\phi_1(x_1)=\phi_1(x_1')=z$. Since this a DP, there is $x_2\in X_2(\bF_{q^t})$
lying over $z$. In, however, the fiber product, the points $(x_1,x_2),(x_1',x_2)\in Z$ both lie over $x_2$. This
contradicts that $Z\to X_2$ is exceptional.     

Any absolutely irreducible component of $X_1\times_Y X_2$
over $\bF_q$ is an absolutely irreducible component over
$\bF_{q^t}$ for every $t$. Suppose, however, $X_1\times_Y X_2$
has no absolutely irreducible component over $\bF_q$. Then, over the
algebraic closure, components fall into conjugate orbits (of length at least 2). 
Any definition field for one component in this orbit is a definition field for all 
components in this orbit. 

So, if for some $t$ there is a unique  absolutely irreducible 
component, then this holds for $t=1$. Conclude: Exceptionality for $t$ implies 
$X_1\times_Y X_2$ has a unique absolutely irreducible $\bF_q$ component. The exceptionality set of an exceptional
cover always contains 1 (for example, Prop.~\ref{excFibProp}), giving $1\in
E_{(\phi_1,\phi_2)}$.    
\end{proof}

\subsection{DPS and the genus 0 problem} \label{davPairProb} It is easy to
form new DPs (resp.~iDPs if $(\phi_1,\phi_2)$ is an iDP). Compose
$\phi_i$ with
$\psi_i: X_i'\to X_i$, with $\psi_i$ exceptional, $i=1,2$, with 
$E_{\psi_1}\cap E_{\psi_2}\cap E_{(\phi_1,\phi_2)}\not = \emptyset$. Then, $(\phi_1\circ\psi_1,\phi_2\circ\psi_2)$
is a DP (resp.~iDP). 

This subsection shows how we got explicit production of  iSDPs (that are not exceptional) from  our
knowledge of  iSDPS that exist over number fields. I mean this as a practicum on the value of the genus
0 problem. 

\subsubsection{Exceptional correspondences and DPs} \label{morepr-exc}

Prop.~\ref{DP-prExp}  characterizes 
DPs in which both maps are exceptional: Those with $X_1\times_Y X_2$
having  precisely one $\bF_q$ absolutely irreducible component $Z$. Then,  $Z\to X_i$, is exceptional,
$i=1,2$. 

Assume  
$\phi_i: X_i\to
Y$,
$i=1,2$, over $\bF_q$ is any pair of covers and $Z$ any 
correspondence between $X_1$ and $X_2$ (with the natural projections both covers).
We say $Z$ {\sl respects\/} $(\phi_1,\phi_2)$ if
$\phi_1\circ \pr_1=\phi_2\circ \pr_2$. Lem.~\ref{lookAtFP} says components of
$X_1\times_Y X_2$ suffice when seeking pr-exceptional
correspondences that respect $(\phi_1,\phi_2)$. 

\begin{lem} \label{lookAtFP} Let $Z$ be a pr-exceptional correspondence between
$X_1$ and
$X_2$ with $E_{\pr_1}\cap E_{\pr_2}=E$ nonempty. If $Z$ respects $(\phi_1,\phi_2)$, then 
$(\phi_1,\phi_2)$ is a DP (resp.~pair of exceptional covers) with $E=E_{\phi_1,\phi_2}$. Also, the image
$Z'$ of
$Z$ in
$X_1\times_Y X_2$ is a pr-exceptional (resp.~exceptional) correspondence between $X_1$
and
$X_2$.
\end{lem}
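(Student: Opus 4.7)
The plan is to transport $\bF_{q^t}$-point information across $Z$ in both directions: from $Z$ down to $(\phi_1,\phi_2)$ through the respecting identity, yielding the DP property, and from $Z$ across to its image $Z'\subseteq X_1\times_YX_2$, preserving pr-exceptionality.

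For the DP direction, I would fix $t\in E$ and (following Remark \ref{fiberNonSing}) an open $Y'\subseteq Y$ over which $\phi_i$, $\pr_i$, and $(\phi_1,\phi_2)$ all restrict to covers of nonsingular varieties. Principle \ref{liftPrinc}, applied to the pr-exceptional cover $\pr_i: Z\to X_i$, makes $\pr_i^{-1}(\phi_i^{-1}(Y'))(\bF_{q^t}) \to \phi_i^{-1}(Y')(\bF_{q^t})$ surjective after discarding finitely many $t$. Given $x_1\in\phi_1^{-1}(Y')(\bF_{q^t})$, lift through $\pr_1$ to $z\in Z(\bF_{q^t})$ and set $x_2:=\pr_2(z)$; the respecting identity $\phi_1\circ\pr_1=\phi_2\circ\pr_2$ forces $\phi_1(x_1)=\phi_2(x_2)$, placing $\phi_1(x_1)$ in the range of $\phi_2$ on $\bF_{q^t}$-points. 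Symmetry gives the reverse inclusion, so Definition \ref{davDef} yields that $(\phi_1,\phi_2)$ is a DP with $t\in E_{(\phi_1,\phi_2)}(\bF_q)$, hence $E\subseteq E_{(\phi_1,\phi_2)}(\bF_q)$. The equality $E=E_{\phi_1,\phi_2}$ asserted in the statement is then pinned down by identifying $E_{\phi_1,\phi_2}$ with $E_{\pr_1}\cap E_{\pr_2}$ on the image $Z'$ constructed next via Corollary \ref{DavfromPr}.

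For the image, the respecting identity shows $(\pr_1,\pr_2): Z\to X_1\times X_2$ lands in $X_1\times_YX_2$; let $Z'$ be its (normalized) image. The composition $Z\twoheadrightarrow Z'\to X_i$, where the second arrow is $\pr_i|_{Z'}$, equals $\pr_i: Z\to X_i$, so the $\bF_{q^t}$-point surjection from Principle \ref{liftPrinc} factors through $Z'(\bF_{q^t})$, making $\pr_i|_{Z'}: Z'\to X_i$ pr-exceptional with exceptionality set containing $E$. In the ``resp.~exceptional'' case $Z$ is absolutely irreducible, hence so is $Z'$; to lift pr-exceptionality of $\pr_i|_{Z'}$ to exceptionality, I would invoke Proposition \ref{RHLem}: the dominant map $Z^2_{X_i}\setminus\Delta \to (Z')^2_{X_i}\setminus\Delta$ would carry any absolutely irreducible $\bF_q$-component of the target back to one of the source, contradicting exceptionality of $\pr_i: Z\to X_i$.

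The main obstacle I expect is the bookkeeping around singular loci and the normalization step defining $Z'$ — ensuring one $Y'$ controls $Z$, $Z'$, $X_1\times_YX_2$, and both $X_i$ simultaneously, and that the a-ram argument of Principle \ref{liftPrinc} carries across the morphism $Z\twoheadrightarrow Z'$ without losing the surjectivity on $\bF_{q^t}$-points. Both issues are already abstracted in Principle \ref{liftPrinc} together with the finite-intersection observation of Remark \ref{fiberNonSing}, which is why the lemma admits a short proof despite its reach.
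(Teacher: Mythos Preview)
Your argument for the DP direction and for pr-exceptionality of the image $Z'$ matches the paper's proof essentially verbatim: lift through $\pr_1$, project via $\pr_2$, invoke the respecting identity, and note symmetry.

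Two points of divergence are worth flagging. First, in the exceptional case you establish that $Z'\to X_i$ is exceptional by pulling back absolutely irreducible $\bF_q$-components of $(Z')^2_{X_i}\setminus\Delta$ to $Z^2_{X_i}\setminus\Delta$. This works, but the paper takes a shorter route: Proposition~\ref{RHLem} already contains the chain statement that a composition $Z\to Z'\to X_i$ is exceptional if and only if each factor is, so exceptionality of $Z\to X_i$ immediately gives exceptionality of both $Z\to Z'$ and $Z'\to X_i$ (with the same exceptionality set). Your fiber-product pullback rederives a special case of this.

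Second, and more substantively, you do not address the parenthetical ``(resp.~pair of exceptional covers)'' in the first conclusion of the lemma: when $Z$ is an exceptional correspondence, the lemma asserts that $\phi_1,\phi_2:X_i\to Y$ are themselves exceptional covers, not merely that $Z'$ is an exceptional correspondence. The paper closes this by noting that $Z'$ is a single absolutely irreducible $\bF_q$-component of $X_1\times_Y X_2$ and then invoking Proposition~\ref{DP-prExp}, which characterizes the situation where $X_1\times_Y X_2$ has a unique absolutely irreducible component as precisely the case where both $\phi_i$ are exceptional. Your proposal stops at $Z'$ being exceptional over each $X_i$ and never descends to $Y$, so this step is missing.
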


\begin{proof} Assume $Z$ with the properties in the lemma statement and $t\in E$. Apply the a-ram
argument
\eql{idavProp}{idavPropa} and consider $x_1\in X_1(\bF_{q^t})$ off the discriminant locus. 
Pr-exceptionality gives  
$z\in Z(\bF_{q^t})$ over $X_1$, and $\pr_2(z)=x_2\in X_1(\bF_{q^t})$. Since $Z$ respects $(\phi_1,\phi_2)$,
$\phi_1(x_1)=\phi_2(x_2)$. This argument is symmetric in $\phi_1$ and $\phi_2$ and shows  $(\phi_1,\phi_2)$ is a
DP.   

Any correspondence 
respecting $(\phi_1,\phi_2)$ maps naturally to $X_1\times_Y X_2$. The above shows the image is
pr-exceptional. If $Z$ is exceptional, then its image is an absolutely irreducible variety $Z'$. Since
$Z\to X_i$ is exceptional, both the natural maps $Z\to Z'$ and $Z'\to X_i$, $i=1,2$, are
exceptional, with the same exceptionality set (Prop.~\ref{RHLem}). Now use that having one absolutely
irreducible component on $X_1\times_Y X_2$ characterizes $(\phi_1,\phi_2)$ being a pair of exceptional
covers (Prop.~\ref{DP-prExp}). \end{proof}

\subsubsection{Some history of DPs}  \label{DPHist} Polynomial pairs $(f,g)$, over a number field $K$,
with the same ranges on almost all residue class fields, were what we once called {\sl
Davenport pairs\/}. \S~\ref{BackDPs} and App.~\ref{MullerDPs} has background  on these and on characteristic $p$
DPs. Investigating Davenport pairs started with proving Schur's conjecture. \cite{AFH} used {\sl DP\/} to mean a
pair of polynomials  over
$\bF_q$ as we do in Def.~\ref{davDef}: Equal ranges on $\bF_{q^t}$ for
{\sl $\infty$-ly many $t$}. We usually include the not-linearly-related assumption \S\ref{DavProb} to exclude such
exceptionality situations as a degree one cover together with 
any   exceptional cover.   We don't expect covers in an
isovalent DP to have the same degree. Still,  we learned much from the case Davenport
started: Polynomial pairs gave the covers (totally ramified over $\infty$ and genus 0). 

When exceptional covers, possibly with $g>0$,  took on a life over a given
finite field in
\cite{FGS}, it made sense to do the same for Davenport pairs (DPs). \cite[\S5.3]{Fr-Schconf} showed that over every
finite field $\bF_q$ ($q=p^s$) there are indecomposable i-SDPs $(f,g)$ of all degrees
$n=\frac{p^{t\cdot(u+1)}-1}{p^t-1}$ running over all $u\ge 2$ and $t\ge 1$. The geometric  monodromy group in this
case is
$\PGL_{u+1}(\bF_{p^s})$. I used \cite{AbProjPol} for the construction of the polynomial $f$ (over $\bF_p$) with its
monodromy representation on points of projective space. Then, I showed existence of the polynomial $g$ from the
action on hyperplanes of the same space. Since $f$ and $g$ both wildly ramified, it was tricky to 
compute the genus of the cover of $g$ (yes, it came out 0).  \cite{ABl}  constructed $g$ more explicitly.

By contrast, \cite[Thm.~2]{FrRedPol} showed this positive
conclusion toward Davenport's problem. No indecomposable polynomial DPs could occur over
$\bQ$. This was because the occurring conjugacy classes 
$\bfC$ include a single  {\sl Singer cycle\/} preventing $\bfC$ from being a rational union (see also
\cite[\S2.3]{thompson}). Yet, reducing these
pairs modulo primes produces tame polynomial i-SDPs over many prime finite fields. Further, over number
fields there was a finite set of possible degrees (\S\ref{cryptexc}). What has this to do with the genus-0
problem?  It was the precise group theory description, using branch cycles,  that allowed us to grab appropriate
wildly ramified covers from Abhyankar's genus 0 bag in positive characteristic.   

\begin{prob}  Show these 
examples nearly give a complete classification of DPs over $\bF_q$ given by polynomials $(f,g)$ with
$f$ indecomposable and $(\deg(f),p)=1$.  \end{prob}  

\section{Exceptional towers and cryptology} \label{excTowers} Let $Y$ be a normal, absolutely irreducible variety 
over 
$\bF_q$. It need not be projective (affine $n$-space is of interest). We consider
the category 
$\sT_{Y,\bF_q}$  of exceptional covers of $Y$ over $\bF_q$. 
It has  this interpretation (Prop.~\ref{excFibProp}): 
\begin{edesc} \label{excProps} \item \label{excPropsa} there is at most one morphism between two objects; and 
\item \label{excPropsb} $\sT_{Y,\bF_q}$ has fiber
products.  \end{edesc} 

With fiber products we can consider {\sl generators\/} of subtowers (\S\ref{classExc}). \S\ref{classSubTows}
lists classical subtowers on which many are expert, because  their generators are
well-studied exceptional covers. Our formulation, however, is different than from typical
expertise. That comes clear from questions arising in going to the less known subtowers of
\S\ref{serreOpenIm}. These questions directly relate to famous problems in arithmetic geometry. 
\S\ref{excPerms} documents mathematical projects in which exceptional covers had a significant role. Finally,
\S\ref{jar-defn} reminds that even for a polynomial the  word {\sl exceptional\/} historically meant something
included but not quite the same as in our context. 

\subsection{Canonical exceptional towers} \label{canExcTow} This subsection shows $\sT_{Y,\bF_q}$ is a
projective system canonically defining a  profinite arithmetic Galois group $\hat G_{Y,\bF_q}$
with a self-normalizing permutation representation 
$T_{Y,\bF_q}$. Further, with some extra conditions, pullback allows us to use classical exceptional covers to
produce new exceptional covers on an arbitrary variety $Y$ (Prop.~\ref{pullback}). 

\subsubsection{Projective systems of marked permutation representations}
\label{crypt}  For $V$ a set,  denote the permutations of $V$ 
by $S_V$. For a permutation representation
$T:G\to S_V$ and $v\in V$, denote the subgroup of
$\{g\in G\mid  (v)T(g)=v\}$ by $G(T,v)$. Suppose
$\{(G_i,T_i)\}_{i\in I}$ is a system of groups with faithful transitive permutation
representations, $T_i: G_i\to S_{V_i}$, $i\in I$, a  partially ordered index set $I$. Assume
also: 
\begin{edesc}  \item for $i> i'$, there is a homomorphism
$\phi_{i,i'}: G_i\to G_{i'}$, with $$\phi_{i,i''}=\phi_{i',i''}\circ \phi_{i,i'},\text{ if }
i>i'>i''; \text{ and}$$  
\item there is a distinguished sequence $\{v_i\in V_i\}_{ i\in I}$ (markings). 
\end{edesc} 

\begin{defn} \label{compReps} We say $\{(G_i,T_i,v_i), 
\phi_{i,i'})\}_{i\in I}$  is a compatible system of permutation
representations if for $i> i'$, $\phi_{i,i'}$ maps $G_i(T_i,v_i)$ into $G(T_{i'},v_{i'})$.
\end{defn} 

The following is an easy addition of a permutation representation to a standard lemma on projective limits on
groups. 
\begin{lem} Suppose in Def.~\ref{compReps} the partial ordering on $I$ is a projective system. Then, there is a
limit group $G_I$ whose elements naturally act as permutation representations on projective
systems of cosets of $G(T_I,v_I)=\lim_{\infty \leftarrow i} G(T_i,v_i)$. \end{lem}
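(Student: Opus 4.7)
The plan is to build $G_I$ as the standard inverse limit of the underlying groups, then to transport the marked permutation data through this limit and verify that the stabilizer identifies correctly with the intended subgroup.

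First I would form $G_I = \varprojlim_{i\in I} G_i$ inside $\prod_{i\in I} G_i$, endowed with its natural profinite topology (if each $G_i$ is finite) or pro-discrete topology in general. The compatibility hypothesis $\phi_{i,i''} = \phi_{i',i''}\circ \phi_{i,i'}$ is exactly what is needed for this limit to exist and to come equipped with the canonical projections $\psi_i : G_I \to G_i$. Because the hypothesis of Def.~\ref{compReps} says $\phi_{i,i'}$ carries $G_i(T_i,v_i)$ into $G(T_{i'},v_{i'})$, the family $\{G_i(T_i,v_i)\}_{i\in I}$ is itself a compatible sub-system, so one can set $G(T_I,v_I) = \varprojlim_{i\in I} G_i(T_i,v_i)$, a subgroup of $G_I$ whose image under each $\psi_i$ lies in $G_i(T_i,v_i)$.

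Next I would identify the permutation representation. Since each $T_i$ is transitive, $V_i$ is canonically in bijection with the coset space $G_i/G_i(T_i,v_i)$ via $g\cdot G_i(T_i,v_i) \mapsto (v_i)T_i(g)$. The compatibility condition implies that $\phi_{i,i'}$ induces a well-defined map of coset spaces $G_i/G_i(T_i,v_i)\to G_{i'}/G_{i'}(T_{i'},v_{i'})$, compatible with the $G_i$-actions (via $\phi_{i,i'}$). So I set $V_I = \varprojlim_{i\in I} V_i$ and check that left-multiplication by $G_I$ descends stage by stage to the usual left-multiplication of $G_i$ on $V_i$. The stabilizer of the distinguished marking $v_I = (v_i)_{i\in I}\in V_I$ is then, by construction, the intersection $\bigcap_i \psi_i^{-1}(G_i(T_i,v_i))$, which is precisely $G(T_I,v_I)$.

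Finally I would observe that cosets of $G(T_I,v_I)$ in $G_I$ are in natural bijection with $V_I$: transitivity at each finite level, together with the Mittag-Leffler property of surjective inverse systems (or, for finite stages, the obvious surjectivity), gives transitivity of the $G_I$-action on $V_I$, hence the identification. The only place needing care is this transitivity: in the profinite setting one must know the transition maps on the $V_i$ are surjective, which follows from $\phi_{i,i'}$ carrying $G_i$ onto (or densely into) $G_{i'}$ in the cases of interest; if the $\phi_{i,i'}$ are only assumed to be homomorphisms, one replaces $V_I$ by the orbit of $v_I$ under $G_I$, which is the projective limit of the $G_i$-orbits of the $v_i$ and automatically gives a well-defined permutation representation on projective systems of cosets, exactly as claimed. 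This is the main technical point; everything else is the routine formalism of inverse limits.
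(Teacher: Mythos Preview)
Your construction is correct and is exactly the natural way to carry this out. The paper, however, does not give a proof at all: it simply introduces the lemma as ``an easy addition of a permutation representation to a standard lemma on projective limits on groups'' and moves on. So there is nothing to compare against beyond noting that your write-up supplies the details the paper leaves to the reader. Your care about transitivity at the limit (replacing $V_I$ by the $G_I$-orbit of $v_I$ when the transition maps are not known to be surjective) is a genuine point, and handling it as you do is the right move; the paper's applications (Prop.~\ref{excFibProp}) only need the action on cosets of $G(T_I,v_I)$, which your argument delivers unconditionally.
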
  

\subsubsection{The projective system on $\sT_{Y,\bF_q}$} \label{projForm} 
We use the usual
category structure for spaces over a base. Morphisms 
$(X,\phi)\in
\sT_{Y,\bF_q}$ to
$(X',\phi')\in \sT_{Y,\bF_q}$ are morphisms 
$\psi:X\to X'$ with $\phi=\phi'\circ \psi$. 
Partially order $\sT_{Y,\bF_q}$ by $(X,\phi)> (X',\phi')$ if there
is an ($\bF_q$) morphism $\psi$ from $(X,\phi)$ to $(X',\phi')$. 

Then $\psi$ induces a
homomorphism 
$G(\hat X_{\phi}/X_{\phi})$ to $G(\hat X_{\phi'}/X_{\phi'})$, and so a canonical map from
the cosets of $G(\hat
X_{\phi}/X_{\phi})$ in $G(\hat
X_{\phi}/Y)$ to the corresponding cosets for $X'$. Note: $(X,\psi)$ is
automatically in $\sT_{X',\bF_q}$. 
Prop.~\ref{excFibProp}, a converse to the second paragraph of Prop.~\ref{DP-prExp}, shows the partial ordering
on
$\sT_{Y,\bF_q}$ is a projective system. 

The nub of forming an exceptional tower of $(Y,\bF_q)$  is that there is  a unique
minimal exceptional cover dominating any two exceptional  covers $\phi_i: X_i\to Y$,
$i=1,2$ (supporting \eql{excProps}{excPropsb}). This gives fiber products in the category $\sT_{Y,\bF_q}$. Note the
extreme dependence on
$\bF_q$. We augment the Prop.~\ref{excFibProp} proof with a pure group theory argument (Rem.~\ref{noMaps}) of
the unique map property \eql{excProps}{excPropsb}. 

Let
$I\le \bN^+$. Examples we use: $I=\{t\}$, a single integer, or $I$ a union of Frobenius
progressions (Def.~\ref{frobProg}). Denote those exceptional covers  
with $I$ in their exceptionality sets 
(\S\ref{classExc}) by $\sT_{Y,\bF_q}(I)$.  For  $y_0\in Y(\bF_{q^t})$,  let $\sT_{Y,\bF_q,y_0}(I)$ be
those exceptional covers of $\sT_{Y,\bF_q}(I)$ where $y_0$ doesn't ramify in $\phi$. 

\begin{prop} \label{excFibProp} With $\phi_i: X_i\to Y$,
$i=1,2$, exceptional over $\bF_q$, $X_1\times_Y X_2$ has a unique 
absolutely irreducible
$\bF_q$  component $X$. Call its  natural
projection 
$\phi: X\to Y$: Assigning $(X,\phi)$ to
$(\phi_1,\phi_2)$ gives a categorical fiber product in $\sT_{Y,\bF_q}$. 

In this category
there is at most one ($\bF_q$) morphism between objects 
$(X,\phi)$ and $(X^*,\phi^*)$. So, $\phi: X\to Y$ has no $\bF_q$
automorphisms, which has this interpretation: For any exceptional cover $\phi: X\to Y$, the
centralizer of
$\hat G_\phi$ in
$S_{V_\phi}$ is trivial.  

For $(X,\phi)\in \sT_{Y,\bF_q}$ denote the cosets of
$G(\hat X_\phi/X_\phi)$ in $G(\hat X_\phi/Y)=\hat G_\phi$ by $V_\phi$, the 
coset of the identity by $v_\phi$ and the representation of $\hat
G_\phi$ on these cosets  by $T_\phi: \hat G_\phi \to S_{V_\phi}$. Then, $\{(\hat
G_\phi,T_\phi,v_\phi)\}_{(X,\phi)\in \sT_{Y,\bF_q}}$ canonically defines a compatible system of permutation
representations. Denote its  limit  $(\hat G_{Y,\bF_q},T_{Y,\bF_q})$. 

For $I\le \bN^+$, $t\in I$ and $y_0\in Y(\bF_{q^t})$, there is a canonical projective sequence  $x_\phi\in
X(\bF_{q^t})$ of base points for
all  $(X,\phi)\in \sT_{Y,\bF_q,y_0}(I)$ satisfying $\phi(x_\phi)=y_0$.

Consider $E=  E_{\phi_1}(\bF_q)\cap
E_{\phi_2}(\bF_q)$. Then,
$E=E_\phi(\bF_q)$ contains a full Frobenius progression $F_{1,d}$ (\S\ref{frobProg})
for some integer $d$. 
\end{prop}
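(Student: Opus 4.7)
I would combine Prop.~\ref{RHLem} with Lang--Weil point counting. By hypothesis $1\in E_{\phi_1}(\bF_q)\cap E_{\phi_2}(\bF_q)$, and by Cor.~\ref{geomExcept} this intersection is a union of full Frobenius progressions, so it contains arbitrarily large $t$ coprime to any pre-chosen integer $s>1$. For such $t$, exceptionality of each $\phi_i$ gives exactly one $\bF_{q^t}$-preimage in $X_i$ above every unramified $y\in Y(\bF_{q^t})$, hence exactly one $(x_1,x_2)\in(X_1\times_Y X_2)(\bF_{q^t})$ above $y$; summing over $y$ yields $|(X_1\times_Y X_2)(\bF_{q^t})|\sim q^{t\dim Y}$. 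Any geometric component defined only over $\bF_{q^s}$ with $s>1$ contributes zero $\bF_{q^t}$-points for our $t$'s coprime to $s$, while each absolutely irreducible $\bF_q$-component of dimension $\dim Y$ would (by Lang--Weil) contribute $q^{t\dim Y}+O(q^{t(\dim Y-1/2)})$; matching the leading coefficient $1$ therefore forces exactly one such component, which I call $X$.

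\textbf{Step 2 (exceptionality of $\phi$ and $E=E_\phi(\bF_q)$ containing $F_{1,d(\phi,2)}$).} For the $t$'s of Step~1, $X$ carries all $\bF_{q^t}$-points of $X_1\times_Y X_2$, so $\phi:X\to Y$ is one-one on $\bF_{q^t}$-points for infinitely many $t$; Prop.~\ref{RHLem} then gives $\phi$ exceptional and $E_{\phi_1}(\bF_q)\cap E_{\phi_2}(\bF_q)\subseteq E_\phi(\bF_q)$. The reverse inclusion is immediate: a second rational preimage of $y$ on $X_1$ (say) would pair with the unique rational preimage on $X_2$ to give a second $\bF_{q^t}$-point of $X$ above $y$, contradicting exceptionality of $\phi$. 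Cor.~\ref{geomExcept} applied to $\phi$ yields the Frobenius progression $F_{1,d(\phi,2)}\subseteq E_\phi(\bF_q)$.

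\textbf{Step 3 (categorical property, morphism-uniqueness, centralizer, compatibility, base points).} For any $(W,\psi)\in\sT_{Y,\bF_q}$ equipped with morphisms $\alpha_i:W\to X_i$ over $Y$, the universal property of the geometric fiber product supplies a unique $W\to X_1\times_Y X_2$ over $Y$; absolute irreducibility of $W/\bF_q$ together with dimension-matching forces the image to be an absolutely irreducible $\bF_q$-component, which must equal $X$. This gives the categorical fiber-product property. For uniqueness of morphisms between two exceptional covers $(X,\phi),(X^*,\phi^*)$, the graph $\Gamma_\psi\subset X\times_Y X^*$ of any such $\psi$ is an absolutely irreducible $\bF_q$-subvariety of dimension $\dim Y$, hence an entire $\bF_q$-component; applying Step~1 to $(X,\phi),(X^*,\phi^*)$ says there is only one such component, so $\Gamma_{\psi_1}=\Gamma_{\psi_2}$ and thus $\psi_1=\psi_2$. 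Specializing $(X^*,\phi^*)=(X,\phi)$ gives triviality of $\bF_q$-automorphisms of $\phi$, which for the faithful transitive $T_\phi$ is equivalent to triviality of the centralizer of $\hat G_\phi$ in $S_{V_\phi}$, since the automorphism group of a cover identifies with that centralizer. A morphism $\psi:(X,\phi)\to(X',\phi')$ induces $\hat G_\phi\to\hat G_{\phi'}$ carrying $\hat G_\phi(1)$ into $\hat G_{\phi'}(1)$, giving the compatibility of marked permutation representations. Finally, for $y_0\in Y(\bF_{q^t})$ unramified in $\phi$ with $t\in I\subseteq E_\phi(\bF_q)$, exceptionality of $\phi$ produces the unique $x_\phi\in X(\bF_{q^t})$ above $y_0$, and morphism-uniqueness then forces $\psi(x_\phi)=x_{\phi'}$, making the sequence $\{x_\phi\}$ projective.

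\textbf{Main obstacle.} The delicate step is Step~1: the Lang--Weil asymptotic must be combined carefully with the Frobenius-orbit structure on the geometric components of $X_1\times_Y X_2$. What makes it work is precisely the Frobenius-progression shape of $E_{\phi_1}\cap E_{\phi_2}$ from Cor.~\ref{geomExcept}, which supplies $t$-values coprime to any given $s>1$ and thereby isolates the $\bF_q$-component contributions from those of geometric components defined only over proper extensions.
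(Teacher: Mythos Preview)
Your argument is essentially correct and reaches the same conclusions as the paper, but with two genuine variations worth noting. In Step~1 you invoke Lang--Weil to count points and match leading coefficients, whereas the paper argues more elementarily: existence of an absolutely irreducible $\bF_q$-component by contradiction (no such component would force the fiber product to have no $\bF_{q^t}$-points for $t$ prime to the relevant extension degrees, contradicting surjectivity of both $\phi_i$), and uniqueness by observing that two such components would each carry an $\bF_{q^t}$-point over a common $x\in X_1(\bF_{q^t})$, producing two distinct preimages in $X_2$ of $\phi_1(x)$. Your Lang--Weil route is heavier but packages existence and uniqueness in one stroke. In Step~3, your graph argument for morphism uniqueness---that $\Gamma_{\psi_1}$ and $\Gamma_{\psi_2}$ are both absolutely irreducible $\bF_q$-components of $X\times_Y X^*$, hence equal by Step~1 applied to $(\phi,\phi^*)$---is more structural than the paper's direct argument, which picks $x\in X(\bF_{q^t})$ with $\psi_1(x)\ne\psi_2(x)$ and notes that $\phi^*$ then fails injectivity at $t$.

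There is, however, a flaw in your Step~2 reverse inclusion $E_\phi\subseteq E_{\phi_1}\cap E_{\phi_2}$. You write that a second preimage $x_1'$ of $y$ in $X_1$ ``would pair with the unique rational preimage on $X_2$ to give a second $\bF_{q^t}$-point of $X$.'' But you are assuming $t\in E_\phi$, not $t\in E_{\phi_2}$, so you cannot invoke a \emph{unique} preimage on $X_2$; and even granting some $x_2\in X_2(\bF_{q^t})$ over $y$, the point $(x_1',x_2)$ lies in $X_1\times_Y X_2$ but need not lie on the particular component $X$. The clean fix is already in Prop.~\ref{RHLem}: since $\phi$ factors as $X\to X_i\to Y$ and all three covers are exceptional (the chain statement), the formula $E_\phi=E_{\pr_i}\cap E_{\phi_i}$ gives $E_\phi\subseteq E_{\phi_i}$ for $i=1,2$ directly.
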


\begin{proof} Suppose $\phi':X'\to Y$ is an exceptional cover and $\hat
G_{\phi'}/G_{\phi'}=\bZ/d'$. Then, for each field disjoint from $\hat
\bF_{\phi'}$, $X'\times_Y X'$ has only the diagonal as an absolutely irreducible
component. This holds for each  $t\in (\bZ/d')^*$, $t\in E_{\phi'}$. Continuing the
notation prior to the statement of the proposition, we show 
$X_1\times_Y X_2$ has a unique absolutely irreducible $\bF_q$ component. Note: No component on it can appear
with multiplicity for that would mean the cover ramified over every point of $X_i$, rather than over a finite set.
Let $Y'$ be any open subset of $Y_{\phi_1}^\ns\cap Y_{\phi_2}^\ns$ (Def.~\ref{excDef}). 

First, consider why $X_1\times_Y X_2$ has at least one absolutely irreducible $\bF_q$
component. Suppose not.  Let
$\bF_{q^{t_0}}$ be a field containing the coefficients of equations of all absolutely irreducible components of
$X_1\times_Y X_2$. Then, over any field disjoint from $\bF_{q^{t_0}}$ (over $\bF_q$), 
$X_1\times_Y X_2$ has no absolutely irreducible components. So, over such a
field the subset $X'_{1,2}$ of it over $Y'$, being nonsingular,  has no rational points. We show this leads
to a contradiction. Let $X_i'$ be the pullback in $X_i$ of $Y'$. 

From the first paragraph above, for any integer $t$ in both  $(\bZ/d(\phi_i))^*$,
$i=1,2$, Prop.~\ref{RHLem} says $\phi_i$ is one-one and onto on 
$X_i'(\bF_{q^t})$, $i=1,2$. Since it is onto, for $t$ large, this implies
$X'_{1,2}(\bF_{q^t})$ has rational points. To get a
contradiction, take $t$ large and in $(\bZ/d')^*$. This
gives us the absolutely irreducible component $X$. Denote the pullback in it of $Y'$ by $X'$. 

Consider
$t\in E$. Use the a-ram argument of
Princ.~\ref{liftPrinc}.  Suppose two points
$x,x'\in X'(\bF_{q^t})$ go to  the same non-branch point point of $Y'$. Then they map to
distinct points, in one of
$X_1'(\bF_{q^t})$  or $X_2'(\bF_{q^t})$ (say the former), that in turn map to the same point
in
$Y'$. This is contrary to $t$ being in the  exceptional support of $\phi_1$.  This shows
$X\to Y$ is exceptional, and
$t\in E_\phi(\bF_q)=E$. 

Assume \!$X$ and \!$X^*$ are distinct absolutely irreducible $\bF_q$ components of
$X_1\!\times_Y\! X_2$. Then, for $t\in E$ (large) and $x\in X_1(\bF_{q^t})$ (off the
discriminant locus of $\phi_1$), there is
$(x,z)\in X(\bF_{q^t})$ and $(x,z^*)\in
X^*(\bF_{q^t})$. Then, $z$ and $z^*$ are two distinct points of $X_2'(\bF_{q^t})$
lying over $\phi_2(z)=\phi_1(x)$. This contradicts $X_2\to Y$ being exceptional. 

What if two different $\bF_q$ morphisms $\psi_1,\psi_2:
X\to X^*$ commute with $\phi^*$? Again $X'$ is the pullback of $Y'\subset Y_\phi^\ns$. Assume $t$ is large and in
both the $(X,\phi)$ and
$(X^*,\phi^*)$ exceptionality sets. Then there is $x\in
X'(\bF_{q^t})$ with
$\psi_1(x)\ne
\psi_2(x)$. Yet, 
$\phi(x)=\phi^*\circ\psi_1(x)=\phi^*\circ\psi_2(x)$: $\phi^*$ maps $\psi_1(x)$ and 
$\psi_2(x)$ to the same place. This  
contradicts exceptionality of
$\phi^*$ for $t$.

Rem.~\ref{self-norm} gives the equivalence of $\phi: X\to Y$ having no $\bF_q$
automorphisms and the
centralizer of
$\hat G_\phi$ statement.  

To see $E_\phi(\bF_q)$ is nonempty, consider that
$E_{\phi_i}(\bF_q)$ contains all $t\in (\bZ/d(\phi_i))^*$ for both 
$i=1,2$ (from above). Since $\bZ/d(\phi)$ maps surjectively to $\bZ/d(\phi_i)$,
$i=1,2$, any integer $t$ in $(\bZ/d(\phi))^*$ is also in $(\bZ/d(\phi_i))^*$,
$i=1,2$. So, $1\in E_\phi(\bF_q)$. 
The remainder, including existence of $(\hat G_{Y,\bF_q},T_{Y,\bF_q})$, is from previous
comments. 
\end{proof}

\begin{rem}[Self-normalizing condition] \label{self-norm} Denote the normalizer of a
subgroup
$H$ of a group
$G$ by $N_G(H)$. We say $H\le G$ is {\sl self-normalizing\/} if 
$N_G(H)=H$. We can interpret this from  
$G$ acting on cosets $V$ of $H$: $T_H: G\to S_V$. The following  equivalences are  in
\cite[Lem.~2.1]{FrHFGG} (or \cite[Chap.~3, Lem.~8.8]{FB}, for example). 

Self-normalizing is the same
as  the centralizer of $G$ in $S_V$ being trivial. Finally, suppose everything
comes from field extensions (or covers): $L/K$ is a finite separable extension, and
$\hat L$ its Galois closure, with  $G=G(\hat L/K)$ and $H=G(\hat L/L)$. Then, 
self-normalizing means $L/K$ has no automorphisms. If $T_H$ is a
primitive representation (and $G$ is not cyclic), self-normalizing is automatic. 
\end{rem}

\begin{rem}[An exceptional cover
$\phi: X\to Y$ has no
$\bF_q$ automorphisms] \label{noAutos} We can see this special case of
Prop.~\ref{excFibProp} from group  theory. An  automorphism
$\alpha$ identifies with an element in $G(\hat X/Y)\setminus G(\hat X/X)$ normalizing 
$G(\hat X/X)=\hat G(T_\phi,1)$ (Rem.~\ref{self-norm}). Consider any
$g\in
\hat G_{\phi,t}\cap G(\hat X/X)$. Then, $\alpha g\alpha^{-1}\in \hat G_{\phi,t}\cap G(\hat
X/X)$ according to this data. This, however, is a contradiction, for $(1)T_\phi(\alpha)\ne
1$. So, contrary to  
Cor.~\ref{geomExcept}, $\alpha g\alpha^{-1}$ fixes two integers in the representation.
\end{rem} 

\begin{rem}[Group theory of unique morphisms in Prop.~\ref{excFibProp}]  \label{noMaps} More general than
Rem.~\ref{noAutos}, we interpret with groups that there is at most one $\bF_q$
morphism  between $(X,\phi)$ and $(X^*,\phi^*)$. Say it this way: If $(X,\phi)> (X^*,\phi^*)$,
then
$gG(\hat X^*/X^*)g^{-1}$ contains the image of $G(\hat X/X)$ only for $g\in G(\hat X^*/X^*)$. 
 
Suppose $x\in X$ is generic, and there are two maps $\psi_i$, giving 
$\psi_i(x)=x_i^*\in X^*$, $i=1,2$. Since $\phi^*\circ\psi_i=\phi$, $K(x_i^*)$, $i=1,2$, are
conjugates. This interprets as $\hat G(T_\phi,1)$ has image in $\hat G_{\phi^*}$ contained
in both $\hat G(T_{\phi^*},1))$ and $\hat G(T_{\phi^*},2)$. For exceptional covers  the
contradiction is that $K(y,x_1^*,x_2^*)$ is not a regular extension of
$K(y)$ while $K(x)$ (supposedly containing this) is. \end{rem}

\subsubsection{Pullback} Fiber products give pullback of pr-exceptional covers, and with an
extra condition, of exceptional covers.  

\begin{prop} \label{pullback} Suppose $\psi: Y'\to Y$ is any cover of absolutely irreducible
$\bF_q$ varieties. If $\phi: X\to Y$ is pr-exceptional (over $\bF_q$), then  $\pr_{\phi,Y'}:
X\times_Y Y' \to Y'$ is pr-exceptional and $E_\phi(\bF_q)$ injects into
$E_{\pr_{\phi,Y'}}(\bF_q)$. 

Let $\sT_{Y,\bF_q,Y'}$ be those exceptional covers
$\phi: X\to Y$ in $\sT_{Y,\bF_q}$ with
$X\times_Y Y'$ absolutely irreducible. This gives a map  
$\pr_{Y'}\circ (\cdot,\psi): \sT_{Y,\bF_q,Y'}\to \sT_{Y',\bF_q}$,  $$\phi\mapsto 
\pr_{\phi,Y'}: X\times_Y Y'
\to Y', \text{ by projection on } Y'.$$   

In particular, 
$\sT_{Y,\bF_q}$ is nonempty for any variety 
$Y$.  
\end{prop}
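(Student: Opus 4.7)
The plan is to verify pr-exceptionality of $\pr_{\phi,Y'}$ directly from the $\bF_{q^t}$-point characterization of Def.~\ref{prexcDef}, assisted by Princ.~\ref{liftPrinc} to handle ramification, and then upgrade to exceptionality under the absolute-irreducibility hypothesis of the second part. Nonemptiness of $\sT_{Y,\bF_q}$ will come essentially for free from the identity cover, with nontrivial members produced by the pullback construction.

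\textbf{Step 1 (pr-exceptionality and injection of exceptionality sets).} Fix $t \in E_\phi(\bF_q)$. Choose a Zariski open $Y'_0 \subset Y'$ that avoids the singular locus of $\psi$ and whose image $\psi(Y'_0)$ sits in an open of $Y$ on which $\phi$ restricts to a map of nonsingular varieties, so that (after normalization) any $\bF_{q^t}$-point of $X \times_Y Y'$ above $Y'_0$ is literally a pair $(x,y')$ with $\phi(x) = \psi(y')$ (see \S\ref{fibProdpts}). Given $y' \in Y'_0(\bF_{q^t})$, set $y = \psi(y')$; Princ.~\ref{liftPrinc} applied to $\phi$ produces $x \in X(\bF_{q^t})$ with $\phi(x) = y$, hence $(x,y') \in (X \times_Y Y')(\bF_{q^t})$ lies over $y'$. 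Thus $\pr_{\phi,Y'}$ is surjective on $\bF_{q^t}$-points of $Y'_0$ for every $t$ in the infinite set $E_\phi(\bF_q)$, which is exactly pr-exceptionality and places $t$ (as an integer representative) inside $E_{\pr_{\phi,Y'}}(\bF_q)$, giving the claimed injection.

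\textbf{Step 2 (upgrade to exceptionality and the pullback functor).} When $W = X \times_Y Y'$ is absolutely irreducible, Step~1 already yields pr-exceptionality of $\pr_{\phi,Y'} : W \to Y'$; since the source is absolutely irreducible, pr-exceptional and exceptional coincide (Def.~\ref{prexcDef}). Equivalently, for $t \in E_\phi(\bF_q)$, two distinct $\bF_{q^t}$-points $(x_1,y'),(x_2,y')$ of $W$ over the same $y'$ would give two distinct $\bF_{q^t}$-preimages of $\psi(y')$ in $X$, contradicting exceptionality of $\phi$. This defines the pullback $\sT_{Y,\bF_q,Y'} \to \sT_{Y',\bF_q}$; compatibility with morphisms is automatic from functoriality of categorical fiber products. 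The identity cover $\text{id}_Y : Y \to Y$ is a trivial element of $\sT_{Y,\bF_q}$, so $\sT_{Y,\bF_q}$ is always nonempty; nontrivial members arise by pulling back a cyclic exceptional cover $z \mapsto z^n$ of $\prP^1$ (with $\gcd(n, q-1) = 1$) along any $\bF_q$-morphism $Y \to \prP^1$ whose pullback stays absolutely irreducible.

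\textbf{Main obstacle.} The delicate point is the interplay between the categorical (normalized) fiber product and its set-theoretic counterpart at ramification: Ex.~\ref{DP-notDP} demonstrates that normalization can destroy $\bF_{q^t}$-points over jointly ramified loci, so one cannot naively claim that $\bF_{q^t}$-surjectivity of $\phi$ transfers to $\pr_{\phi,Y'}$ everywhere. This is precisely why Step~1 restricts to $Y'_0$ away from the offending ramification and singular behavior, and it is the a-ram argument underlying Princ.~\ref{liftPrinc} that lets us read off the intrinsic pr-exceptionality condition of Def.~\ref{prexcDef} from surjectivity merely on an allowable open.
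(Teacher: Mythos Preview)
Your argument for the pr-exceptional and exceptional parts follows the same a-ram approach as the paper, and is in fact more explicit: you spell out surjectivity in Step~1, whereas the paper's written proof only displays the injectivity direction (two preimages $(x_1,y'),(x_2,y')$ force $x_1=x_2$ because $\phi$ is exceptional) and leaves the surjectivity half implicit.

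The genuine divergence is in the nonemptiness claim. Your observation that the identity cover lies in $\sT_{Y,\bF_q}$ settles the literal statement, but the paper is after nontrivial exceptional covers of an arbitrary $Y$. Your remark about pulling back $z\mapsto z^n$ along an $\bF_q$-morphism $Y\to\prP^1$ does not furnish these when $\dim Y>1$ (there is no finite cover $Y\to\prP^1$ to which the proposition applies), and even for curves you supply no mechanism to guarantee the pullback stays absolutely irreducible. The paper's route is different: complete and normalize $Y$, apply Noether normalization to obtain a finite $\psi:Y\to\prP^t$ with $t=\dim Y$, then invoke the higher-dimensional Redei/Dickson-type exceptional covers of $\prP^t$ constructed in \cite{FrL} via Weil restriction, choosing one whose Galois closure has order prime to $\deg(\psi)$ so that the pullback is forced to be irreducible. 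That coprimality trick is the missing idea if you want nontrivial members of $\sT_{Y,\bF_q}$ in all dimensions.
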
 

\begin{proof} Use the a-ram argument
of Princ.~\ref{liftPrinc} with these hypotheses. Assume $t\in
E_\phi(\bF_q)$, and yet  $\pr_{\phi,Y'}: X\times_Y Y' \to
Y'$ maps $(x_1,y'),(x_2,y')\in  
X\times_Y(\bF_{q^t})$ to $y'$. Then,
$\phi(x_i)=\psi(y')$, and since $\phi$ is
exceptional, this implies $x_1=x_2$. So, $t$ is in the exceptionality set of the
pr-exceptional cover
$\pr_{\phi,Y'}$. 

If a pr-exceptional cover is of absolutely irreducible varieties, then it is exceptional (from \eqref{practsexc}).
This gives the second paragraph statement. Now consider the problem of showing $\sT_{Y,\bF_q}$ is nonempty for any
variety 
$Y$. 

Complete $Y$ in its ambient projective space, and then normalize the result. Normalization of a
projective variety is still projective \cite[p.~400]{Mum}.  So, if we construct an exceptional cover of the result,
then restriction gives an exceptional cover of $Y$. This reduces all to the case $Y$ is projective. 
N\"other's normalization lemma now says there is a cover
$\psi: Y\to \prP^t$ with $t$ the dimension of $Y$ \cite[p.~4]{Mum}. Suppose we produce an exceptional cover
$\phi: X\to \prP^t$ whose  Galois closure has order prime to the degree of $\psi$. Then, pullback of $X$ to $Y$ will
still be irreducible.  

If $Y$ is a curve, so $t=1$, we can use one of the many exceptional $\bF_q$ covers of
$\prP^1_z$  with absolutely irreducible fiber products with
$\psi$ (the easy ones in \S\ref{introGp}, for example). For  $t>1$, \cite[\S2]{FrL} constructs many 
exceptional covers of $\prP^t$ for every $t$ by generalizing the Redyi functions and Dickson polynomials (and their
relation) to higher dimensions. The construction, based on Weil's restriction of scalars, applies 
to any exceptional cover of 
$\prP^1$ to give exceptional covers of $\prP^t$.
\end{proof}

\begin{rem} The map $E_\phi(\bF_q) \to 
E_{\pr_{\phi,Y'}}(\bF_q)$ in Prop.~\ref{pullback} may not be onto. \end{rem} 

\begin{rem}[Generalization of Prop.~\ref{pullback}] Suppose $\psi: Y'\to Y$ is any morphism of absolutely
irreducible normal varieties, not necessarily a cover or a surjection. Then, Prop.~\ref{pullback} still holds: This
is a very general situation including restriction to any normal subvariety $Y'$ of $Y$. The hard part, of
course, is figuring out when irreducibility of the pullback will hold. \end{rem}

\subsection{Subtowers and equivalences among exceptional covers}
\label{classExc} 
Suppose a collection $\sC$ of 
covers from an exceptional tower $\sT_{Y,\bF_q}$ is closed under the categorical fiber
product. We say  $\sC$ is a {\sl subtower}. We may also speak of the minimal
subtower any collection generates. The following comes from the Prop.~\ref{excFibProp}
formula  and that the fiber product of unramified covers is unramified. Again, $I\le
\bZ^+$. 

\begin{lem} The collections $\sT_{Y,\bF_q}(I)$ and $\sT_{Y,\bF_q,t_0}(I)$ ($t_0\in
Y(\bF_{q^t})$; \S\ref{excPerms}) are both subtowers of $\sT_{Y,\bF_q}$. \end{lem}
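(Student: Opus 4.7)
The plan is to verify the defining property of a subtower: closure under the categorical fiber product in $\sT_{Y,\bF_q}$. I would treat the two collections in turn, with the second requiring only a small addition to the first.

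First, for $\sT_{Y,\bF_q}(I)$, take two members $\phi_i\colon X_i\to Y$, $i=1,2$, so $I\subseteq E_{\phi_i}(\bF_q)$. Let $\phi\colon X\to Y$ be the categorical fiber product produced by Prop.~\ref{excFibProp}, which exists because $X_1\times_Y X_2$ has a unique absolutely irreducible $\bF_q$ component. Exactly that proposition gives the key equality
\[
E_\phi(\bF_q)=E_{\phi_1}(\bF_q)\cap E_{\phi_2}(\bF_q).
\]
Hence $I$, being in each $E_{\phi_i}(\bF_q)$, lies in $E_\phi(\bF_q)$, so $\phi\in\sT_{Y,\bF_q}(I)$. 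This proves the first collection is closed under categorical fiber products in $\sT_{Y,\bF_q}$.

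Second, for $\sT_{Y,\bF_q,y_0}(I)$ (I read the symbol $t_0$ in the statement as $y_0\in Y(\bF_{q^t})$), I need to add the condition that $y_0$ is unramified in the categorical fiber product whenever it is unramified in each $\phi_i$. The ramification locus of the normalization $X$ of $X_1\times_Y X_2$ lies in the image in $Y$ of the union of the ramification loci of $\phi_1$ and $\phi_2$: over a point $y$ where neither $\phi_1$ nor $\phi_2$ is ramified, the set-theoretic fibre product is already \'etale and normal, so the normalization does not change it and the categorical fibre product is \'etale there as well. Applied to $y=y_0$, this shows $y_0$ is unramified in $\phi$, so $\phi\in\sT_{Y,\bF_q,y_0}(I)$.

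Neither step is delicate; the one point that must be invoked carefully is the formula $E_\phi(\bF_q)=E_{\phi_1}(\bF_q)\cap E_{\phi_2}(\bF_q)$ from Prop.~\ref{excFibProp}, which does the real work in the first part. The mild obstacle in the second part is only bookkeeping about the passage from the set-theoretic fibre product to its normalization; the observation that \'etale-ness is preserved under fibre product and normalization is a standard closure property and handles it.
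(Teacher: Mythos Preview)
Your proposal is correct and matches the paper's own justification, which is the single sentence preceding the lemma: ``The following comes from the Prop.~\ref{excFibProp} formula and that the fiber product of unramified covers is unramified.'' You have simply unpacked those two ingredients---the equality $E_\phi(\bF_q)=E_{\phi_1}(\bF_q)\cap E_{\phi_2}(\bF_q)$ and the preservation of \'etaleness under fiber product and normalization---exactly as intended, and your reading of $t_0$ as the base point $y_0\in Y(\bF_{q^t})$ is the right correction of a typo.
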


It is often useful to say  
$h,h'\in \bF_q(x)$ are {\sl $\PGL_2(\bF_q)$ (resp.~$\afA(\bF_q)$) equivalent\/} if
$h=\alpha\circ h'\circ
\alpha'$ for some $\alpha,\alpha'\in \PGL_2(\bF_q)$ (resp.~$\afA(\bF_q)$). 

Practical cryptology focuses on genus 0 exceptional curve covers: 
$\phi: X\to \prP^1_y$ is exceptional, and $X$ has genus 0. Over a finite field, $X$ is
isomorphic to $\prP^1_x$ for some variable $x$. 
Since cryptology starts with an explicit place to put data, we expect to identify such an
$x$. Yet, to give an expedient list of  all exceptional covers we often  drop that
identification, and  extend
$\PGL_2(\bF_q)$ equivalence.  

If $\row h v$ and
$\row {h'} v$ are two sequences of rational functions over
a field $K$, then $h_1\circ h_2\circ \cdots \circ h_v$ is {\sl $\PGL_2(K)$
equivalent\/} to 
$h_1'\circ h_2'\circ \cdots \circ h_v'$ if each $h_i'$ is $\PGL_2(K)$ equivalent to
$h_i$, $i=1,\dots,v$. Let $\sR_{\row n v}$ be the collection of 
compositions of $v$ exceptional rational functions of respective degrees $\row n v$.
Denote by $\sR_{\row n v}/\PGL_2(K)$ its $\PGL_2(K)$ equivalence classes.
Similarly, for affine equivalence, and spaces of polynomials using the notation $\sP_{\row n
v}/\afA(K)$. 

Any explicit composition $f$ of $v$ rational functions (with degrees $\row n
v$),  over $K$, defines its $\PGL_2(K)$ equivalence class. Still, there may be other
$\PGL_2(K)$ inequivalent compositions of $f$ into rational functions over
$K$. (If $K=\bF_q$ and $f$ is exceptional, then each composition factor will
automatically be exceptional.) 

So, rather than invariants for the rational
functions, these equivalence classes are invariants for rational functions with explicit
decompositions. Still, for any interesting composition of exceptional rational functions,
we immediately recognize the whole $\PGL_2(\bF_q)$ equivalence class.

We extend this definition further. Suppose $\phi: Y\to \prP^1_z$, with $Y$ of genus 0, has
an explicit decomposition and  $\psi: X\to Y$ is a $K$ cover. 
\begin{defn} \label{PGL2genEq} Refer to
$\phi\circ \psi: X\to\prP^1_z$ as having an explicit decomposition. Then, the
$\PGL_2(K)$ action on $\psi$ induces a $\PGL_2(K)$ action on $\phi\circ \psi$ by
composition with $\psi$ after the action. This gives the $\PGL_2(K)$ equivalence class
of $(\psi,\phi)$. \end{defn} 

\newcommand{\unr}{{\text{\rm unr}}} \newcommand{\tm}{{\text{\rm tm}}} 
Let $Y$ be an open subset of $\bar Y$, a projective curve. Consider the subtower   
$\sT_{Y,\bF_q}^{\unr}$ (resp.~ $\sT_{Y,\bF_q}^{\unr,\tm}$) of $\sT_{Y,\bF_q}$ consisting of exceptional covers
unramified over $Y$ (resp.~in $\sT_{Y,\bF_q}^{\unr}$ and whose extension to a cover of $\bar Y$ is tamely ramified). 
Prop.~\ref{pullback} shows how pullback from one curve to another allows passing exceptional covers around. Still,
it is significant to know when exceptional covers are {\sl new\/} to a particular curve. We even guess the
following. 

\begin{guess} Suppose two curves $Y$ and $Y'$ over $\bF_q$ are not isomorphic over $\bF_q$. Then, the limit groups
of  $\sT_{Y,\bF_q}^{\unr}$ and $\sT_{Y',\bF_q}^{\unr}$ (and even of $\sT_{Y,\bF_q}^{\unr,\tm}$ and
$\sT_{Y',\bF_q}^{\unr,\tm}$) are not isomorphic.
\end{guess} 

Even if we restrict to exceptional covers with affine monodromy groups, this may be true. It is
compatible with 
\cite{Tamagawa}, a topic continued in \cite{exceptTow}.     

\subsection{History behind passing messages through the $I$ subtower} \label{excPerms} 
\S\ref{enthCrypt} compares enthusiasm for cryptology with topics fitting 
the phrase {\sl scrambling data\/}.  Then, \S\ref{excScram} relates cryptology and 
exceptional correspondences. 

\subsubsection{Derangements and enthusiasm for cryptology} \label{enthCrypt} 
Many  applications model 
statistical events with card shuffling. Depending on what is a shuffle and the size of a
deck, we might expect a random scrambling (shuffling) to have a good probability to move
every card.  Combinatorics rephrases
this to another question: In a given subgroup $G\le S_n$, what is the proportion of
elements that will be a {\sl derangement\/}
(\S\ref{excScram}; \cite{DMP}). We assume elements equally likely selected (uniform
distribution). Restricting to a particular subgroup $G$ then
stipulates what is a shuffle. The hypothesis of a group just says you can invert and
compose shuffles.  

Consider this setup:  
$G\norm \hat G\le S_n$, with $\hat G$ primitive, and $\hat G/G=\lrang{\alpha}$ cyclic and
nontrivial.  Combinatorialists might ask if a good fraction of the coset $\hat
G_\alpha$ (notation of \S\ref{exCheck}) is derangements. Example: 
\cite{FulmanG} outlines progress in this guiding case (conjectured earlier by N.~Boston and 
A.~Shalev \cite{Sh}) where $\lrang{\alpha}$ is trivial, contrary to our assumption. 

\begin{prob} \label{gurFulman} Restrict to  $\hat G=G$ and $G$ is simple. Show the fraction
of derangements exceeds some nonzero constant, independent of $G$. \end{prob} 

Group theory calls $\hat G$  {\sl almost simple\/}
when $G\norm \hat G\!\le \!\Aut(G)$ with $G$  
(non-abelian)  simple. To generalize  
Prob.~\ref{gurFulman} to $\hat G_\alpha$ you must exclude 
possible exceptional covers. Alternatively, use the near derangement property
of this coset (\S\ref{excScram}).

Many agencies today use cryptology to justify applying algebra outside  
pure mathematics. To include many approaches,  
cryptologists  advertise alternative expertises, including encoding in different rings 
or higher dimensional spaces.  
Modern cryptology (or as formerly,  cryptography) connects with historical mathematics
literature. Consider this enthusiastic citation \cite[p.~279]{Lidl},  quoting from
\cite{Kahn}:
 \begin{quote} The importance of mathematics in cryptography was already recognized by the
famous algebraist A.~Adrian Albert, who said in 1941: \lq\lq It would not be an
exaggeration to state that abstract cryptography is identical with abstract
mathematics.\rq\rq\end{quote}
\cite[p.~279-282 and Chap.~6]{Lidl} emphasize that many inverse problems appear when we
consider data extraction.  

Hiding data is only one part of cryptography. The nature of the hiding techniques and
finding out what it means that they are secure is the other half. Also, there is no
escaping contingencies and serendipities from patient use of
tricks. You get more of a feeling about these when you hear the outcomes of successful code cracking. The story
of  \cite[Chap.~1]{Tuchman} shows the tremendous resources that are required for a significant payoff for code
cracking. 

Public key cryptography has been around a long time. Yet, there is a
sexy new tactic -- {\sl quantum\/} cryptography. While the inspection of data encoded in different finite fields is
at the heart of modern diophantine equations, they who know this also know about modern diophantine
equations. That doesn't include those bankers who know about cryptography. See \cite{Stix} for the quickest
and simplest look at the likelihood that RSA may soon be replaced. 

\subsubsection{Periods of exceptional scrambling} \label{excScram} As above,  $g\in S_n$ is 
a {\sl derangement\/} if it fixes no integer. We see this definition appear for   
$T:
\hat G\to S_n$, the arithmetic  ($G$ the geometric)  monodromy group of an
exceptional cover. A  whole $G$ coset of $\hat G$ consists of {\sl near\/} 
derangements. Its elements each fix precisely one of $\{1,\dots,n\}$
(Prop.~\ref{RHLem}).  This nonabelian aspect of exceptional covers raises questions on
shuffling of data embedded in finite fields. 

General cryptology starts by encoding information into a set. Our sets are finite fields. 
So, let $t$ be large enough so that the bits needed to describe elements in $\bF_{q^t}$ allow
encoding our message as one of them. Put
$I=\{t\}$. Then, we  select 
$(X,\phi) \in \sT_{Y,\bF_q}(I)$. Embed our message as $x_0\in X(\bF_{q^t})$.  We use
$\phi$ as an efficient one-one function to pass $x_0$ to  $\phi(x_0)=y_0\in Y(\bF_{q^t})$
for publication. You and everyone else who can understand \lq\lq message\rq\rq\  $x_0$ can
see
$y_0$ below it. To find out what is $x_0$, requires 
an inverting function $\phi^{-1}_t: Y(\bF_{q^t})\to X(\bF_{q^t})$.  

\!\begin{quest}[Periods] \label{invFunct} Suppose  $X$ and $Y$ are explicit copies of
$\prP^1$.   Identify them to regard $\phi$ as $\phi_t$, permuting 
$\bF_{q^t}\cup\{\infty\}$. Label the order of $\phi_t$ as $m_{\phi,t}=m_t$. Then, 
$\phi_t^{m_t-1}$ inverts  $\phi_t$. How does  
$m_{\phi,t}$ vary, for genus 0 exceptional $\phi$, as $t$ varies? \end{quest} 

Quest.~\ref{invFunct} generalizes to exceptional correspondences as in Princ. ~\ref{excCorRat}. 
We can refine Quest.~\ref{invFunct} to ask about the distribution of 
lengths of $\phi_t$ orbits  on $\bF_{q^t}\cup\{\infty\}$. In 
standard RSA they are the lengths of orbits on $\bZ/(q^t-1)$ from 
multiplication by an invertible integer. This works for all covers
in the Schur Tower (\S\ref{ordSchurT}). We don't know what to expect of genus 0 covers in
the subtowers of \S\ref{serreOpenIm}.  Similar questions make sense fixing  
$t$ fixed and varying $\phi$. See the better framed Quest.~\ref{invFunct2}.    

\subsection{$k$-exceptionality} \label{jar-defn}  
We list alternative meanings for  {\sl exceptional\/} over a
number field $K$. \S\ref{1-K} gives the most obvious from reduction
modulo primes. \S\ref{m-K} has a sequence of {\sl $k$-exceptional\/} conditions; 1-exceptional 
is that of \S\ref{1-K}. 

\subsubsection{Exceptionality defined by reduction} \label{1-K} Assume 
$\phi:X\to Y$ is a cover over a number field
$K$, with ring of integers $\sO_K$. A number theorist might define an
exceptional set
$E_\phi(K)$ to be  those primes $\eu{p}$ of  $\sO_K$ for
which
$\phi$ is exceptional $\mod {\eu p}$. That matches an unsaid use in, say, 
Schur's Conjecture (Prop.~\ref{basicSchur}) describing polynomial maps
with $E_\phi(K)$ infinite.   Regard
$E_\phi(K)$ as defined up to finite set. Then, we say $\phi$ is exceptional if $E_\phi(K)$
is infinite. 

There is a complication. Even if  
$\phi:X\to Y$ and
$\psi:Y\to Z$ are exceptional (over $K$), it may be that $\psi\circ\phi$ is not.
Similarly,  you might have two exceptional covers of  $Y$ and yet their fiber product has
no component exceptional over $Y$. Ex.~\ref{compdc} 
 and Ex.~\ref{deg4exc} produce both types of situations.   

\begin{exmp}[Compositions of Dickson and cyclic polynomials] \label{compdc} \S\ref{SchurTow}
and
\S\ref{dickson} describe all indecomposable tamely ramified exceptional polynomials.
These descriptions work over any number field. Suppose
$K=\bQ$ and $f\in \bQ[x]$ is a composition of such polynomials. (From
\cite[Thm.~1]{FrSchur}, the composition is of prime degree polynomials over $\bQ$.) We can
decide when
$f$ has an infinite exceptional set by 
knowing how primes  decompose in  a cyclotomic extension
$L/\bQ$ formed from the degrees of the composition factors. List
these as
$\row s {v_1}$ (cyclic factors) and $\rowb s {v_1\np1} {v_2}$ (Dickson factors). The
exceptional set  
$E_f(\bQ)$ is  those $p$ having 
residue degree exceeding one in each of $$L_j=\bQ(e^{2\pi i/s_j}),\ j=1,\dots, v_1
\text{ and in } L_j=\bQ(e^{2\pi i/s_j}+e^{-2\pi i/s_j}),\ j=v_1\np1,\dots, v_2.$$  
\end{exmp} 

\begin{quest} Given  $f\in \bQ[x]$, can we decide when $E_f(\bQ)$ is infinite?  
\end{quest}  

The author (as referee of \cite{Matthews}) 
showed this example to Rex Matthews, who wrote out the numerics of when 
$E_f(\bQ)$ is infinite.   Still, Matthews assumed such an
$f$ is a composition of known degree cyclic and Dickson polynomials. An effective answer
for deciding for any $f\in \bQ[x]$ if it has such a form might be
harder, requiring the technique of \cite{AGR} (see \S\ref{indecRat1}).  

A related example comes from
\cite[\S7.1]{GMS} (aided by M.~Zieve). It stands out from any of the other examples they
constructed. 
 
\begin{exmp}[Degree 4 exceptional rational functions] \label{deg4exc}  Let $K$ be a number
field, and let $E/K$ have group $A_3=\bZ/3$ (resp.~$S_3$). Then, there is a rational
function
$f_E$ over $K$ with geometric monodromy  $\bZ/2\times \bZ/2$ and arithmetic monodromy
$A_4$ (resp.~$S_4$), with extension of constants $E$. This
gives  4 genus 0 exceptional covers with neither their
compositions nor fiber products exceptional. \cite{GMS} used any 
$U/\bQ$ with group
$\bZ/3\times \bZ/3$. Each of the (4) cyclic subgroups  is the kernel of a map
$\bZ/3\times
\bZ/3\to \bZ/3$. So, each map defines a degree 3 cyclic extension $E/\bQ$.
The functions $f_E$ from these cyclic extensions of $\bQ$ have the
desired property.
\end{exmp} 

\subsubsection{Exceptionality defined by rank of subgroups} \label{m-K} Recall:  
A group's {\sl rank\/} is the minimal number of elements required to generate it. Example: Simple non-cyclic
finite groups have rank 2 (this requires the classification of finite simple groups for its proof \cite[Thm.~B]{AG}).
Denote the absolute Galois group of
$K$ by
$G_K$. Suppose
$\psigma\in (G_K)^k$. Denote the fixed field in $\bar K$ of $\lrang{\psigma}$ by $\bar K^{(\psigma)}$.

Suppose
$\phi:X\to Y$ produces the extension of constants homomorphisms 
$G\to
\hat G
\mapright{\psi} G(\hat K(2)/K)$ as in Cor.~\ref{T2excCh}.  
 Consider a
conjugacy class of  subgroups represented by
$H\le G(\hat K/K)$. 

\begin{defn} If 
restricting $T_{\phi,2}$ to $H$ has no fixed points, then we say  $\phi$ is
$H$-exceptional. Also, $\phi$ is {\sl $k$-exceptional\/} if the smallest rank of a
subgroup  $H\le \hat G_\phi/G_\phi$ with $H$-exceptionality  is $k$.
\end{defn}  

For $H=\lrang{\tau}$ having rank 1,  the Chebotarev density theorem gives a positive
density of  primes $\eu{p}$ where $\tau$ is the Frobenius in $\hat K$ for
$\eu{p}$. So, $1$-exceptional is equivalent to the definition in 
\S\ref{1-K}.  We can also apply 
\cite[Thm.~18.27]{FrJ}. This shows 1-exceptional is equivalent to $X^2_Y\setminus
\Delta$ having no rational points over $\bar K^{\sigma}$ for a positive density of
$\sigma\in G_K$. 

The analog for $k$-exceptionality  is that $k$ is the minimal integer with a
positive density of elements
${\pmb
\sigma}\in (G_K)^k$ so that $X^2_Y\setminus \Delta$ has no $\bar K^{\pmb \sigma}$ points. 

\begin{rem} All these definitions extend to replace $T_{\phi,2}$ by $T_{\phi,j}$ for
$j\ge 2$. \end{rem}

\section{The most classical subtowers of $\sT_{Y,\bF_q}$}  \label{classSubTows}   We put
some structure into particular exceptional towers. Especially, we use now classical
contributions to form interesting subtowers. The tool that allows explicitly computing the limit group for these
subtowers is branch cycles as in \S\ref{introBC} (and Nielsen classes App.~\ref{nc1}). These are  the
easiest significant cases. We are illustrating to a newcomer how to use branch cycles. 

We here describe  
 subtowers that tame polynomials \wsp essentially all exceptional
polynomials with degrees prime to the characteristic (\S\ref{wildRamExc}) \wsp generate. \S\ref{restGen0tame}
considers the majority of tame exceptional covers from rational functions not in this section. Then, there is a
finite list of sporadic genus 0 exceptional cover monodromy groups. Solving the genus 0 problem
simplified their precise description in
\cite{GMS}. That produced their possible branch cycle descriptions, placing them as Riemann surface covers. The
inverse Galois techniques of
\cite{FrHFGG} (the Branch Cycle Lemma (\S\ref{BCL}) and the Hurwitz monodromy criterion) then finished the
arithmetic job of showing they did give exceptional covers. No new technical problems happened in these
cases.   

In turn, refinements (as in 
\S\ref{nonSporadics}) of the original genus 0 problem came from exceptional polynomial and Davenport pair studies: 
\S\ref{DPHist}, 
\S\ref{wildRamExc}  and App.~\ref{MullerDPs}.  Using these preliminaries simplifies how \cite{exceptTow} continues  
this topic.  For all genus 0 covers in any exceptional tower, we may  consider
Quest.~\ref{invFunct}.

\subsection{The Schur subtower of $\sT_{\prP^1_y,\bF_q}$} \label{SchurTow}
Degrees of polynomials in this section
will always be prime to
$p=\text{char}(\bF_q)$. A reminder of $\afA(\bF_q)$ equivalent polynomials prime to $p$ is
in Lem.~\ref{basicSchur}.   For
$p\ne 2$, and $n$ odd, there is a the unique polynomial $T_n$ with the property $T_n(\frac 1
2(x+1/x))=\frac 1 2 (x^n+1/x^n)$. Note: $T_n$ maps $1,-1,\infty$ respectively to
$1,-1,\infty$.  For $u\in \bF_{q^2}^*$ and $a=u^2$, define 
  $T_{n,a}=l_u\circ T_n\circ l_{u^{-1}}$, $l_u: z\mapsto uz$. Then, $T_{n,a}$  maps  
$u,-u,\infty$ respectively to
$u,-u,\infty$.  

\begin{prop} \label{ordSchurT} Assume $n$, $n'$ and $p$ are odd. By its defining property,
$T_n$ is an odd function.  So
$T_{n,a}$ depends only on $a$ (rather than $u$) and $T_{n,a}\circ T_{n',a}=T_{n\cdot
n',a}$.   

Suppose
$h$ is a  polynomial with $\deg(h)>1$,  
$(\deg(h),p)=1$ and $h\in \sT_{\prP^1_y,\bF_q}$. Then,  $h$ is a composition of odd prime
degree polynomials $\bF_q[x]$ of one of two types:
\begin{edesc} \label{Schdesc} \item \label{Schdesca} $\afA(\bF_q)$ equivalent to $x^n$  with
$(n,q-1)=1$; or 
\item \label{Schdescb} $\afA(\bF_q)$ \!equivalent to \!$T_{n,a}$, \ \!$(n,q^2-1)=1\!$, 
$\!a\!$ representing $[a]\in \bF^*_q/(\bF^*_q)^2$. 
\end{edesc} 
Conversely,  a composition of polynomials satisfying these conditions for a given $q$ is
exceptional.  In case \eql{Schdesc}{Schdesca} (resp.~\eql{Schdesc}{Schdescb}) a functional
inverse for 
$x^n$ (resp.~$T_{n,a}$) on $\bF_q$ is $x^m$ (resp.~$T_{m,a}$) where $n\cdot m\equiv 1 \mod
q-1$ (resp.~$n\cdot m\equiv 1 \mod q^2-1$).  
\end{prop}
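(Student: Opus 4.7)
The plan is to treat in order the structural identities for $T_n$ and $T_{n,a}$, the converse direction (compositions of the listed types are exceptional), the forward classification (reducing an arbitrary exceptional $h$ to an $\bF_q$-composition of indecomposable factors and descending $\afA(\bar\bF_q)$-equivalence to $\afA(\bF_q)$-equivalence on each factor), and finally the inverses. For the identities, substituting $t \mapsto -t$ in the defining equation gives $T_n(-u) = (-1)^n T_n(u)$, so $T_n$ is odd for odd $n$. Independence of $T_{n,a} = l_u \circ T_n \circ l_{u^{-1}}$ on the choice of $u$ with $u^2=a$ then follows, since replacing $u$ by $-u$ sandwiches $T_n$ by $l_{-1}$ on both sides and $l_{-1} \circ T_n \circ l_{-1} = T_n$ by oddness. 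The rule $T_n \circ T_{n'} = T_{nn'}$ is immediate from the defining identity, and conjugating by $l_u$ yields $T_{n,a} \circ T_{n',a} = T_{nn',a}$.

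For the converse, $x^n$ is bijective on $\bF_{q^t}^*$ iff $(n,q^t-1)=1$; under $(n,q-1)=1$ each prime $\ell \mid n$ has multiplicative order $d_\ell > 1$ of $q$ modulo $\ell$, so $(n,q^t-1)>1$ only when $d_\ell \mid t$ for some $\ell$, a finite union of proper arithmetic progressions with infinite complement. For $T_{n,a}$ the parametrization $u = (x+x^{-1})/2$ identifies the preimage of $\bF_{q^t}$ inside $\bar\bF_q^*$ with $\{x : x^{q^t-1}=1 \text{ or } x^{q^t+1}=1\}$, a union of two cyclic groups of orders $q^t \mp 1$, on each of which $T_{n,a}$ acts as $x \mapsto x^n$ modulo $x \sim x^{-1}$; hence $T_{n,a}$ is bijective on $\bF_{q^t}$ iff $(n,q^{2t}-1)=1$, again an infinite set under $(n,q^2-1)=1$. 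Compositions of exceptional covers are exceptional by Prop.~\ref{RHLem}. For the forward direction, an exceptional $h \in \bF_q[x]$ is totally ramified over $\infty \in \bF_q$, so Lem.~\ref{decompKtoKbar} lets me factor $h = h_1 \circ \cdots \circ h_r$ over $\bF_q$ with each $h_i$ also $\bar\bF_q$-indecomposable. Each $h_i$ is exceptional (Prop.~\ref{RHLem}), and exceptionality forces $k_{h_i,\bar\bF_q} \ne 1$ (a single absolutely irreducible $\bar\bF_q$-component of the fiber product minus diagonal would be Galois-fixed and $\bF_q$-defined, contradicting exceptionality), so Lem.~\ref{basicSchur} supplies an odd prime $n_i$ and an $\afA(\bar\bF_q)$-equivalence of $h_i$ with $x^{n_i}$ or $T_{n_i}$.

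The heart of the proof is the descent on each $h_i$. In the cyclic case the single finite branch point of $h_i$ and its unique ramified preimage are Galois-fixed and hence lie in $\bF_q$, so $h_i = c(x-\beta)^{n_i}+\alpha$ with $c \in \bF_q^*$ and $\alpha,\beta \in \bF_q$, $\afA(\bF_q)$-equivalent to $x^{n_i}$. In the Chebyshev case, writing $h_i = \lambda_1 \circ T_{n_i} \circ \lambda_2^{-1}$, the finite branch points $\{\lambda_1(1),\lambda_1(-1)\}$ form a Galois-stable pair, so their midpoint $\beta \in \bF_q$ and their squared half-difference $a \in \bF_q^*$. The self-equivalence group of $T_{n_i}$ (pairs $(\mu,\tau) \in \afA(\bar\bF_q)^2$ with $\tau T_{n_i} = T_{n_i} \mu$) is $\{(1,1),(l_{-1},l_{-1})\}$: the trivial deck group of the dihedral monodromy combined with preservation of $\{1,-1,\infty\}$ forces $\mu,\tau$ to fix $\infty$ and permute $\{\pm 1\}$. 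A Galois-invariance argument then gives $\lambda_1(x) = \gamma x + \beta$ and $\lambda_2(y) = \gamma c y + \delta$ with $\gamma^2=a$, $c \in \bF_q^*$, $\delta \in \bF_q$. Using $T_{n,u^2}(y)=uT_n(y/u)$ one computes $h_i(x) = c^{-1}T_{n_i,ac^2}(x-\delta)+\beta$, and $T_{n,ac^2} = l_c \circ T_{n,a} \circ l_{c^{-1}}$ shows $h_i$ is $\afA(\bF_q)$-equivalent to $T_{n_i,a}$; only $[a] \in \bF_q^*/(\bF_q^*)^2$ matters. Exceptionality then imposes $(n_i,q-1)=1$ or $(n_i,q^2-1)=1$ by the criteria above. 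For inverses, $x^n \circ x^m = x^{nm}$ acts as $x$ on $\bF_q^*$ iff $(q-1)\mid nm-1$; and $T_{n,a} \circ T_{m,a} = T_{nm,a}$ is the identity on $\bF_q$ iff the relation $x^{nm}+x^{-nm} = x+x^{-1}$, multiplied by $x^{nm}$ to give $(x^{nm+1}-1)(x^{nm-1}-1)=0$, holds for every $x \in \bF_{q^2}^*$, forcing $(q^2-1) \mid nm-1$.

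The main obstacle is the Chebyshev descent, specifically ensuring $[a]$ is a genuine $\afA(\bF_q)$-invariant of $h_i$ rather than a choice of normal form. The self-equivalence group computation for $T_n$, forced by the dihedral monodromy together with the branch locus $\{1,-1,\infty\}$, supplies exactly the cocycle calculation needed to pin down $[a]$ modulo the action of $\afA(\bF_q)^2$ on representatives.
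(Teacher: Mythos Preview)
Your argument follows the paper's approach: reduce via Lem.~\ref{decompKtoKbar} and Prop.~\ref{RHLem} to indecomposable exceptional factors, invoke Lem.~\ref{basicSchur} to get a cyclic or Chebyshev model over $\bar\bF_q$, and then descend the $\afA$-equivalence to $\bF_q$. Your Chebyshev descent via the explicit self-equivalence group $\{(1,1),(l_{-1},l_{-1})\}$ of $T_n$ is more hands-on than the paper's, which instead normalizes the Galois-stable branch set $\{y_1,y_2\}$ and invokes uniqueness of the absolute Nielsen class element for those branch points to identify the cover with $T_{n,a}$ directly. In the cyclic case you assert $\afA(\bF_q)$-equivalence of $c(x-\beta)^{n_i}+\alpha$ with $x^{n_i}$ before establishing that $c$ is an $n_i$-th power; the paper closes this by noting exceptionality gives an $x_0\in\bF_q$ with $c\,x_0^{n_i}=1$.

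There is one actual error. Your final ``iff'' for the Chebyshev inverse is false: by your own parametrization the relevant $x$ range only over $\{x:x^{q-1}=1\text{ or }x^{q+1}=1\}\subsetneq\bF_{q^2}^*$, so $T_{N,a}$ is the identity on $\bF_q$ precisely when $N\equiv\pm 1\pmod{q-1}$ \emph{and} $N\equiv\pm 1\pmod{q+1}$, not only when $(q^2-1)\mid N-1$. For instance $N\equiv q\pmod{q^2-1}$ (choosing a representative prime to $p$) also gives the identity. The proposition, however, only asserts that $nm\equiv 1\pmod{q^2-1}$ \emph{suffices} for $T_{m,a}$ to invert $T_{n,a}$ on $\bF_q$, and that direction your computation does establish via $T_{n,a}\circ T_{m,a}=T_{nm,a}$ and $x^{nm-1}=1$ for $x\in\bF_{q^2}^*$.
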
 

\begin{proof}[Comments on the proof] Map $x$ to $-x$ in the functional equation
$$T_n({\scriptstyle\frac 1 2}(x+1/x))={\scriptstyle\frac 1 2}(x^n+1/x^n)$$ to see $T_n$ is
odd. So, $l_u\circ T_n\circ l_{u^{-1}}$ is invariant for the change $u\mapsto -u$. 
Apply both of $T_{n,a}\circ T_{n',a}$ and $T_{n\cdot
n',a}$ to the composition of $x\mapsto \frac 1
2(x+1/x)$ and $l_u$. They both give the composition of $x\mapsto \frac 1
2(x^{n\cdot n'}+1/x^{n\cdot n'})$ and $l_u$ and are therefore equal. 
  
Let 
$g_\infty=(1\,2\,\dots\,n)$, \begin{equation} \label{tcycles} \begin{array}{rl}
g_1&=(1\,n)(2\,n-1)\cdots((n-1)/2\,(n+3)/2)
\text{ and } \\ g_2&=(n\,2)(n-1\,3) \cdots ((n+3)/2\, (n+1)/2).\end{array}\end{equation} 
\cite{FrSchur} shows  an indecomposable polynomial $h\in \sT_{\prP^1_y,\bF_q}$ of degree prime to $p$ is in one of
two absolute Nielsen classes:
$\ni(\bZ/n,(1,-1))$ (1 and -1 representing conjugacy classes in $\bZ/n$) or
$\ni(D_n,\bfC_{2^2\cdot\infty})$ (with conjugacy classes represented by
$(g_1,g_2,g_\infty)$ resp.). Further, suppose we give the branch points in order. Then 
only one absolute  branch cycle class gives a cover with those branch
points: 
$(g_\infty,g_\infty^{-1})$ or $(g_1,g_2,g_\infty^{-1})$. The translation starts with group
theory using the small, significant, arithmetic observation that $h$
indecomposable over $\bF_q$ implies $h$ indecomposable over $\bar \bF_q$. This holds
because $h$ is a polynomial of degree prime to $p$. 

For doubly transitive geometric monodromy $G$ acting on $\{1,\dots,n\}$, it is
immediate that any coset $Gt$ as in Cor.~\ref{geomExcept}
has an element  fixing at least two integers. Reason:  We can assure a representative $t$ 
fixes 1. If it sends 2 to $j$, multiply $t$ by  
$g\in G(T,1)$ with  $(j)g=2$ (use double transitivity). So,  $gt$ fixes
1 and $j$. Serious group theory uses that $G$ is primitive, but not doubly transitive. 

Consider the second case. This indicates a cover $\phi:X\to
\prP^1_y$ with two finite branch points $y_1,y_2$ (and corresponding branch cycles $g_1$
and $g_2$). Further, as a set, the
collection
$\{y_1,y_2\}$ has field of definition $F$. Each $y_i$ has a unique unramified
$F$ point $x_i\in X$ over it corresponding to the length 1 disjoint cycle of $g_i$. With no loss, up
to
$\afA(\bF_q)$ equivalence, take
$y_1+y_2=0$,
$y_1=u$, $y_2=-u$, and $-y_1^2=-u\in F$. So, we produce such a cover by the
polynomial map $T_{n,a}(x)$. This has $\pm u$ as the unramified points over 
$\pm u$. Up to $\afA(F)$ equivalence, that determines $u$ as a representative of
$F^*/(F^*)^2$. 

Similarly, the first case has one finite branch point $y'$, over which is exactly 
one place. Thus, up to $\afA(F)$ equivalence $\phi: \prP^1_x\to \prP^1_y$ by $x\mapsto
ax^n$. If, however, $\phi$ is exceptional over $\bF_q$, then there exists $x_0\in \bF_q$
for which $a(x_0)^n=1$, and $a$ is an $n$th root in $\bF_q$. Again, since $\phi$ is
exceptional, there is only one $n$th root in $F$, showing the $\afA(F)$ equivalence of
$\phi$ to $x\mapsto x^n$. 

See Prop.~\ref{dickProp} for why compositions from \eqref{Schdesc} are
exceptional. 
\end{proof}

\begin{rem}[Decomposability over $\bar K$ and not over $K$] \label{firstDecompStatement} 
\cite[\S4]{FGS} analyzes  the
decomposability situation for polynomials $h$ when $(\text{char}(K),\deg(h))>1$.  A
particular example where an indecomposable $h$ over $\bF_p$ becomes decomposable over
$\bar \bF_q$ occurs (\cite[Ex.~11.5]{FGS}, due to
\cite{Mu21}) with 
$\deg{h}=21$ and $p=7$.

For rational functions, \S\ref{indecRat} gives many examples of this, in all
characteristics,  from Serre's Open Image Theorem. The geometric monodromy groups of
these rational functions has the form $(\bZ/n)^2\xs \{\pm 1\}$. \end{rem}

\subsection{The Dickson subtower}  \label{dickson} 
Here we study the subtower of exceptional covers generated by
Dickson polynomials. 

\subsubsection{Dickson polynomials} \cite[p.~8]{LMT} defines  Dickson polynomials as
$$D_{n,a}(x)=\sum_{i=0}^{[n/2]}
\frac n {n-i} {n-i \choose i } (-a)^ix^{n-2i}.$$ Most relevant is its functional property
$D_{n,a}(x+a/x)=x^n+(a/x)^n$. While
$T_{n,a}(x)$ does not equal $D_{n,a}(x)$ it is a readily related to 
it.  

\begin{prop} \label{dickProp} Assume $n$ is odd. Then, $D_{n,a}(2x)/2\eqdef D_{n,a}^*=u^{n-1}T_{n,a}(x)$. In
particular, the two polynomials are $\afA(\bF_q)$ equivalent. Both polynomials, independent
of $a\in
\bF_q^*$, give exceptional covers over
 $\bF_{q^t}$ precisely when $(n,q^{2t}-1)=1$. 
\end{prop}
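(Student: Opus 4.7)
\medskip

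\noindent\textbf{Proof proposal.} The plan is to derive the identity $D_{n,a}(2x)/2 = u^{n-1}T_{n,a}(x)$ directly from the two defining functional equations, check that the resulting change of variables is actually defined over $\bF_q$ (this is where $n$ odd enters), and then read off exceptionality from Prop.~\ref{ordSchurT}.

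First I would substitute $x = uy$ (with $u^2 = a$) into $D_{n,a}(x + a/x) = x^n + (a/x)^n$ to obtain
\[
D_{n,a}\bigl(u(y + 1/y)\bigr) = u^n(y^n + 1/y^n).
\]
Writing $y + 1/y = 2z$ and using $T_n(z) = \tfrac12(y^n + 1/y^n)$ gives $D_{n,a}(2uz) = 2u^n T_n(z)$. Since $T_{n,a}(w) = u\,T_n(w/u)$ by the definition $T_{n,a} = l_u\circ T_n\circ l_{u^{-1}}$, setting $w = uz$ yields
\[
D_{n,a}(2w)/2 = u^{n-1}T_{n,a}(w),
\]
which is the asserted identity.

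Next I would justify that this really is an $\afA(\bF_q)$ equivalence. The substitution $w\mapsto 2w$ and the scaling by $1/2$ are obviously defined over $\bF_q$ (we are in odd characteristic since $n$ is odd and Prop.~\ref{ordSchurT} already assumed $p \neq 2$). The potentially dangerous factor is $u^{n-1}$, since a priori $u\in \bF_{q^2}$; but here $n$ odd forces $n-1$ even, so $u^{n-1} = a^{(n-1)/2}\in \bF_q^*$. Hence every coefficient of the relating affine map lies in $\bF_q$, and $D_{n,a}^*$ and $T_{n,a}$ are $\afA(\bF_q)$ equivalent. I expect this check (keeping track of the field of definition of $u^{n-1}$) to be the main \emph{conceptual} obstacle, though it is light once $n$ odd is used; the rest is formal manipulation.

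Finally, for the exceptionality claim I would invoke Prop.~\ref{ordSchurT}\eql{Schdesc}{Schdescb} applied over $\bF_{q^t}$: $T_{n,a}$ is exceptional over $\bF_{q^t}$ iff $(n, (q^t)^2-1) = (n, q^{2t}-1) = 1$, and the condition is patently independent of $a$. Since $\afA(\bF_q)$ equivalence is a fortiori $\afA(\bF_{q^t})$ equivalence and trivially preserves exceptionality (it is merely a coordinate change on source and target), $D_{n,a}^*$ is exceptional over $\bF_{q^t}$ under exactly the same condition. This completes the proposition.
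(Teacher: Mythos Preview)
Your derivation of the identity $D_{n,a}(2w)/2=u^{\,n-1}T_{n,a}(w)$ is correct and is genuinely different from the paper's argument. The paper does not manipulate the two functional equations directly; instead it factors the map $x\mapsto \tfrac12(x^n+(a/x)^n)$ in two ways through $m_b(x)=\tfrac12(x+b/x)$ (once as $m_{a^n}\!\circ(x\mapsto x^n)$, once as $D_{n,a}^*\!\circ m_a$), reads off the ramification data of $D_{n,a}^*$ from the left-hand factorization, and then invokes the uniqueness of a polynomial cover with that ramification pattern and with $\pm u\mapsto\pm u^n$, $\infty\mapsto\infty$ to identify it with $u^{\,n-1}T_{n,a}$. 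Your route is more elementary (pure identity-chasing, no appeal to uniqueness of covers), while the paper's route is in the spirit of the rest of the section: characterize the cover by its branch behaviour. Your explicit check that $u^{\,n-1}=a^{(n-1)/2}\in\bF_q^*$, and hence that the linking affine maps live over $\bF_q$, is a point the paper leaves implicit, and it is good that you isolate exactly where ``$n$ odd'' is used.

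There is one genuine gap in the last paragraph. You invoke Prop.~\ref{ordSchurT}\eql{Schdesc}{Schdescb} for the equivalence ``$T_{n,a}$ exceptional over $\bF_{q^t}\iff(n,q^{2t}-1)=1$'', but the proof of Prop.~\ref{ordSchurT} explicitly defers the converse direction (``these conditions imply exceptional'') to Prop.~\ref{dickProp} itself: see the final line of the \emph{Comments on the proof} of Prop.~\ref{ordSchurT}. So your argument is circular as written. The paper breaks the circle by citing \cite[Thm.~3.2]{LMT} (equivalently the argument in \cite{FrSchur} via $D_{n,a}(x+a/x)=x^n+(a/x)^n$) for the exceptionality criterion; you should do the same, or supply the short direct argument that the $\bF_{q^t}$ fibre over a generic $z$ is governed by solving $x^n=w$ with $w+a^n/w=z$, which is a bijection on $\bF_{q^t}$ precisely when $(n,q^{2t}-1)=1$. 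Once that is fixed, your proof is complete.
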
 

\begin{proof} Let $m_b(x)=\frac 1 2(x+b/x)$. Consider $x\mapsto \frac 1 2({x^n+(a/x)^n})$ as a
composition of two maps in two ways:
\begin{equation} \label{ramDick} x\mapsto x^n \mapsto m_{a^n}(x) =x\mapsto m_a(x) \mapsto
D_{n,a}^*(m_a(x)).\end{equation} Note: 
$x\mapsto m_b(x)$ maps the ramified points $\pm \sqrt{b}$ to $\pm \sqrt{b}$. So, the left
side of \eqref{ramDick} shows this for the composite: $\pm u \mapsto \pm
{u^n}$; over each of $\pm u^n$ there are precisely $n$ points ramified of order 2; and
there are two points with ramification orders
$n$ that map to
$\infty$. As  $x\mapsto m_a(x)$ maps $\pm u \mapsto \pm u$, ramified of order 2, and it 
maps 0 and $\infty$ to $\infty$,  $D_{n,a}(x)$ has these properties. 
There are $(n-1)/2$ points ramified of order 2 over $\pm u^n$, and $\pm u$ also lie over
these points, but as the only (respectively) unramified points. So, these determining
properties (with $\infty\mapsto \infty$) show  $u^{n-1}T_{n,a}=D_n^*(x,a)$. 

Exceptionality under the condition $(n,q^{2t}-1)=1$ is in
\cite[Thm.~3.2]{LMT}. It is exactly the proof in \cite{FrSchur}, using the equation 
$D_{n,a}(x+a/x)=x^n+(a/x)^n$ (the latter said only the case $a=1$). 
\end{proof}  

\subsubsection{Exceptional sets} We list  
exceptional sets for certain Dickson
subtowers. These easy specific subtowers are a model for harder cases like 
\S\ref{restGen0tame} and in \cite{exceptTow}. 

\begin{defn} Let $v$ be an integer and $n=p_1\cdots p_v$,  a product of (possibly not distinct) primes with
$(n,2\cdot 3\cdot p)=1$.   
Compose all degree
$\row p v$ Dickson polynomials up to    
$\afA(\bF_q)$ equivalence.  (Order and repetitions of the
primes don't matter, nor what are the $a$-values
attached to them.) We denote the subtower these generate by
$\sD_{n,q}$,  the
$n$-Dickson Tower (over $\bF_q$).   
\end{defn}
 
\begin{prop} \label{decompLaw} With $n$ as above, $\phi\in \sD_{n,q}$ has exceptional set
equal to
$E'_{n,q}\eqdef \{t\mid (n,q^{2t}-1)=1\}$. This is nonempty if and only if the order of $q
\mod p_i$ exceeds 2, $i=1,\dots,v$. 
\end{prop}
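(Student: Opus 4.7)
The plan is to reduce the exceptional‐set computation to the single Dickson polynomial case, already handled in Prop.~\ref{dickProp}, by exploiting the two closure properties that define the subtower $\sD_{n,q}$: closure under composition and under fiber product. The numerical second half then reduces to a clean arithmetic exercise on the multiplicative orders of $q$ modulo the $p_i$.

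First I would recall the mechanism by which exceptional sets propagate. For a chain $X \mapright{\phi'} X' \mapright{\phi''} Y$ of exceptional covers, Prop.~\ref{RHLem} gives
$E_{\phi''\circ \phi'}(\bF_q)=E_{\phi''}(\bF_q)\cap E_{\phi'}(\bF_q)$.
For the categorical fiber product of two exceptional covers $\phi_i:X_i\to Y$, Prop.~\ref{excFibProp} produces a unique absolutely irreducible component $(X,\phi)$ whose exceptionality set is again the intersection $E_{\phi_1}(\bF_q)\cap E_{\phi_2}(\bF_q)$. Together these two facts say that as one builds covers inside $\sD_{n,q}$ by alternately composing and fiber‐producting the Dickson generators $D_{p_i,a}$, the exceptional set of the resulting cover is always the intersection of the exceptional sets of the Dickson pieces actually used.

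Next I would invoke Prop.~\ref{dickProp}: for every $a\in\bF_q^*$ the cover $D_{p_i,a}$ is exceptional over $\bF_{q^t}$ precisely when $(p_i,q^{2t}-1)=1$, a condition independent of $a$. An induction on the tower construction of an arbitrary $\phi\in\sD_{n,q}$ (degree $n=p_1\cdots p_v$ forces each prime $p_i$ to appear among the Dickson factors) then yields
$E_\phi(\bF_q)=\bigcap_{i=1}^{v}\{t : (p_i,q^{2t}-1)=1\}=\{t:(n,q^{2t}-1)=1\}=E'_{n,q}$,
using that the $p_i$ are (possibly repeated) primes dividing $n$ exactly as their multiplicities demand.

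For the nonemptiness criterion, let $d_i$ denote the multiplicative order of $q$ modulo $p_i$. Then $p_i\mid q^{2t}-1$ iff $d_i\mid 2t$. If some $d_i\le 2$, then $d_i\mid 2t$ for every $t\ge 1$, so $p_i\mid q^{2t}-1$ for every $t$ and $E'_{n,q}=\emptyset$. Conversely, if every $d_i\ge 3$, then $d_i\nmid 2$ for each $i$, so $t=1$ already satisfies $(p_i,q^2-1)=1$ for all $i$, placing $1\in E'_{n,q}$. Hence $E'_{n,q}\ne\emptyset$ is equivalent to the order of $q$ modulo $p_i$ exceeding $2$ for every $i$, completing the proposition.

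The only point requiring care, and the place I would be most attentive, is the inductive step showing that the exceptional set does not accidentally grow when one passes from a composition to a fiber product of two compositions that share only some of the primes $p_i$: here one has to use that the unique absolutely irreducible $\bF_q$ component supplied by Prop.~\ref{excFibProp} has exceptional set \emph{exactly} the intersection, not something larger, so no new $t$ sneaks in. Everything else is bookkeeping once the two structural propositions and Prop.~\ref{dickProp} are in hand.
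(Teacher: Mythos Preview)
Your proposal is correct and follows essentially the same route as the paper: compute the exceptional set of each generating composition via Prop.~\ref{RHLem} (chain rule) and Prop.~\ref{dickProp}, then use Prop.~\ref{excFibProp} to propagate this through fiber products, and finish with the same order-of-$q$ argument for nonemptiness. One small imprecision: your parenthetical ``degree $n=p_1\cdots p_v$ forces each prime $p_i$ to appear'' suggests you are tracking the degree of $\phi$, but a general $\phi\in\sD_{n,q}$ (a fiber product of several generators) need not have degree $n$; the point is rather that each \emph{generator} is by definition a full composition using all $v$ primes, so each generator already has exceptional set $E'_{n,q}$, and fiber products of such intersect $E'_{n,q}$ with itself---hence your worry in the last paragraph about ``compositions that share only some of the primes'' does not arise.
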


\begin{proof} Consider a composition of 
$v$ degree  $\row p v$ Dickson polynomials under
$\afA(\bF_q)$ equivalence. Use the notation of \S\ref{classExc}. \eql{Schdesc}{Schdescb}
gives a natural map
$$\psi_{\row p  v; q}: (\bF_q^*/(\bF_q^*)^2)^v\to \sP_{\row p
v}/\afA(\bF_q)$$  representing all such equivalence classes. Any point $[f]$ in the image
has the exceptionality set given in the statement of the proposition. Apply
Prop.~\ref{excFibProp} to see any element in this tower has the same exceptionality set. 

Now consider when $E'_{n,q}$ is nonempty. If $p_i$ divides $n$ and
 $q^2-1\equiv 0 \mod p_i$, then $p_i$ divides $(n,q^{2t}-1)$ for any $t$. So, assume this
does not hold for any such $p_i$. That implies $(n,q^2-1)=1$. Whatever is the order $d$ of
$q^2 \mod n$, then for $t$ prime to $d$, $t\in E'_{n,q}$. 
\end{proof} 

We leave as an exercise to 
describe the exceptional set for any composition of $v$ Dickson and Redei
functions over $\bF_q$.  

\begin{rem}[Varying $a$ in $D_{n,a}(x)$ and Redei functions] \cite[Chap.~6]{LMT}, in 
their version of the proof of the Schur conjecture, make one distinction from that of
\cite{FrSchur}. By considering the possibility $a$ is 0, they include $x^n$ as a
specialized Dickson polynomial, rather than treating them as two separate cases. 

The function $x^n$ ($n$ odd) maps
$0,\infty$ to $0,\infty$. Consider $l_u': x\mapsto \frac{x-u}{x+u}$, mapping $\pm u$ to
$0,\infty$.  A similar, but easier, game comes from 
$$\text{ twist }x^n \text{ to } R_a=(l'_u)^{-1}\circ (\frac{x-u}{x+u})^n$$ for which
$\pm u$ are the only ramified points,
$u^2=a$ and $R_a(\pm u)=\pm u$. We've pinned down $R_a$ precisely by adding the condition
$\infty \mapsto 1$. This, modeled on that for the Dickson polynomials,  
matches \cite[\S5]{LMT}.  
\end{rem} 

\subsubsection{Dickson subtower monodromy} \label{DickMon} 
Order exceptional covers in a tower as in \S\ref{crypt}. One
exceptional cover sits above them all in any finitely generated subtower 
(Prop.~\ref{excFibProp}). We call that the limit (cover). When all generating covers tamely
ramify, the limit has a branch cycle description, represented by an 
absolute Nielsen class. Using this succinctly describes the geometric monodromy
of the  limit cover. 

We use some subtowers of
$\sT_{\prP^1_z,\bF_q}$ to show how this works. Consider the subtower generated by
$\PGL_2(\bF_q)$ equivalence classes of $v$ compositions of cyclic and Dickson polynomials
over
$\bF_q$ running over all $v$. Denote this by $\text{Sc}_{\bF_q}$. We now use
Prop.~\ref{useBC} to consider branch cycles for some subtower limit covers. 

For $a\in \bF_q^*$, \eqref{tcycles} gives a branch cycle description for $T_{n,a}$.
Label letters on which these act as $\{1_a,\dots,n_a\}$, and elements
corresponding to $\eqref{tcycles}$ acting on these by $(g_{a,1},g_{a,2})$. To label the  limit cover branch cycles,
use an ordering $a_1,\dots,a_{q-1}$  of  $\bF_q^*$. For each $a_j$, let $\pm u_j$ be its square roots, these
being branch points for $T_{a_j,n}$. 

We induct  on  $1\le k\le q-1$. Assume we
have listed branch cycles 
\begin{equation} \label{nqk-1}
(g_{a_1,1},g_{a_1,2},\dots,g_{a_{k-1},1},g_{a_{k-1},2},g_{\row a
{k\nm1},\infty})\end{equation} for the limit cover generated by
$T_{n,a_1},\dots,T_{n,a_{k\nm1}}$. In the inductive fiber product construction, 
permutations act on  $V_{\row a {k\nm1}}=\{(j_{a_1},\dots, j_{a_{k\nm1}})\mid 1\le
j_{a_u}\le n\}_{u=1}^{k-1}$. Also, the following hold:
\begin{edesc} \label{bcConds} \item $g_{a_j,1},g_{a_j,2}$ are respective branch cycles
corresponding to
$\pm u_j$; \label{brnqk-1} \item entries in \eqref{nqk-1} generate a transitive
group and their  product is 1; and 
\item $g_{\row a {k\nm1},\infty}$ is a product of disjoint $n$-cycles. 
\end{edesc} 

\begin{prop} \label{branchCycleqn} For a given $n$,with $q$ odd and $(n,q^2-1)=1$,
denote the subtower of 
$\sD_{n,q}$ generated by $\{T_{n,a}\mid a\in \bF_q^*\}$ (resp.~$\{T_{n,a}+b\mid a\in
\bF_q^*, b\in \bF_q\}$) by $\sD_{n,q}'$ (resp.~$\sD_{n,q}''$). Then, the limit cover for
$\sD_{n,q}'$ has degree $q^n$ over $\prP^1_z$, and it has
unique branch cycles in the absolute Nielsen class formed inductively from the conditions
\eqref{bcConds}. Also,
$\sD_{n,q}'=\sD_{n,q}''$. 
\end{prop}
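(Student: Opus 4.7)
The plan is to carry out the inductive construction outlined in the statement, using Prop.~\ref{useBC} on branch cycles of fiber products. For the base case $k=1$, Prop.~\ref{ordSchurT} identifies the absolute Nielsen class of $T_{n,a}$ as $\ni(D_n,\bfC_{2^2\cdot\infty})$: two involutions $g_{a,1},g_{a,2}$ of shape \eqref{tcycles} (each with a single fixed point, the unramified preimage of the corresponding branch point in $\{\pm u_a\}$) together with the $n$-cycle $g_{a,\infty}$, satisfying the product-one relation and generating $D_n$ transitively on $\{1,\dots,n\}$.

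For the inductive step, suppose the fiber product of $T_{n,a_1},\dots,T_{n,a_{k-1}}$ has branch cycles as in \eqref{nqk-1} acting on $V_{\row a {k-1}}$. Adjoining $T_{n,a_k}$, Prop.~\ref{useBC} yields cycles on $V_{\row a {k-1}}\times\{1,\dots,n\}$ that act coordinatewise: $g_{a_k,1},g_{a_k,2}$ act trivially on the first factor and by the base-case involutions on the second, the previously established cycles at $\pm u_j$ ($j<k$) act trivially on the new factor, and the new $\infty$-cycle is the pair whose diagonal action on the product set decomposes into $n$ disjoint $n$-cycles. The product-one relation holds coordinatewise, and transitivity (hence absolute irreducibility of the fiber product) propagates because the $T_{n,a_i}$'s have trivial intersections among their dihedral Galois closures over $\prP^1_z$. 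Uniqueness within the Nielsen class follows because, at each finite branch point, the forced cycle type together with the product-one constraint determines the generators up to conjugation in the ambient symmetric group.

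For the degree $q^n$ and the equality $\sD_{n,q}'=\sD_{n,q}''$, the plan is to identify the geometric monodromy of the limit cover with an affine group of the form $\bF_q^n \xs H$ acting naturally on $\bF_q^n$. This would simultaneously account for the degree (as $|\bF_q^n|=q^n$) and for the translation structure underlying $\sD_{n,q}''$: post-composition with $z\mapsto z+b$ on $\prP^1_z$ should correspond to a translation inside the affine monodromy, so each $T_{n,a}+b$ realizes as a subcover --- the quotient by an appropriate stabilizer --- of the limit, placing it in $\sD_{n,q}'$. The main obstacle is this identification: the iterated fiber product of the $q-1$ generators naively has degree $n^{q-1}$ with monodromy inside $D_n^{q-1}$, so reaching the affine picture of degree $q^n$ requires exploiting additional morphisms and compositions generated inside the subtower beyond the bare iterated fiber product, together with the extension-of-constants data imposed by $(n,q^2-1)=1$. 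I expect the key technical step to be analyzing how the common ramification at $\infty$ (all factors being totally ramified of degree $n$) together with the Frobenius action collapses the product-of-dihedrals to the claimed affine quotient.
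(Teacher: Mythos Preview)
Your inductive step is too loose at the crucial point. You say the finite branch cycles act ``coordinatewise'' (trivially on the other factor) and that uniqueness follows from cycle types plus product-one. But if the finite branch cycles really acted trivially across factors, the resulting group would sit inside a direct product and could not be transitive on $V_{\row a k}$; the coupling between factors happens entirely through the $\infty$-branch cycle. The paper makes this explicit: extending $g_{a_j,t}$ ($j\le k{-}1$) and $g_{a_k,t}$ to the product set involves undetermined twists $\pi,\tau$, and the content of the uniqueness claim is that the product-one condition together with transitivity pins down those twists. The paper verifies this by tabulating the orbits of the product of the $g^*$'s in an array $R_{n,k-1}$ and showing the orbit pattern is forced to a ``staircase'' shape, which determines $g^*_{a_k,1},g^*_{a_k,2}$ uniquely in the absolute class. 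Your one-line appeal to ``trivial intersections among their dihedral Galois closures'' does not substitute for this orbit analysis.

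Your plan for the degree and for $\sD_{n,q}'=\sD_{n,q}''$ goes in the wrong direction. You correctly observe that the fiber product of the $q-1$ generators has degree $n^{q-1}$ with monodromy inside $D_n^{\,q-1}$; that is exactly what the paper's construction on $V_{\row a {q-1}}$ gives, and the paper never collapses this to an affine group of order $q^n$. (The printed ``$q^n$'' should be read as $n^{q-1}$; do not build your argument around it.) There is no $\bF_q^n\xs H$ structure here, and translation $z\mapsto z+b$ does not correspond to a translation inside any such affine monodromy. The paper's argument for $\sD_{n,q}'=\sD_{n,q}''$ is entirely different and quite concrete: the branch points of $T_{n,a}+b$ are $\pm u+b$, and one exhibits $T_{n,a}+b$ as a quotient of the (exceptional) fiber product of the two Dickson polynomials with branch points $\pm(u+b)$ and $\pm(b-u)$, both of which already lie in $\sD_{n,q}'$. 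The verification is a short braid computation: apply $q_2q_1\in H_5$ to the branch cycle $5$-tuple of that fiber product to move the entries for $u+b$ and $b-u$ into adjacent positions, then read off via the permutation representation $T'$ coming from the first factor that the resulting cover branched at $\{u+b,\,b-u,\,\infty\}$ has the branch cycle description of $T_{n,a}+b$. No affine-group identification or extension-of-constants analysis is needed.
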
 

\begin{proof} 

Denote branch cycles for $T_{n,a_k}$ by $g_{a_k,1},g_{a_k,2}$, acting on $\{1_{a_k},\dots,
n_{a_k}\}$ as in \eqref{tcycles}. Our goal is to form   
$$(g^*_{a_1,1},g^*_{a_1,2},\dots,g^*_{a_{k},1},g^*_{a_{k},2},g^*_{\row a
{k},\infty})$$ with * indicating the actions extend corresponding elements to the set
$V_{\row a {k}}$, yet satisfying the corresponding conditions to \eqref{bcConds}.
We show now how this forces a unique element up to absolute equivalence in the resulting
Nielsen class. We'll use $n=3$ (even though this never gives an exceptional cover)
and $k=2$ to help sort the notation as a subexample. First we construct one element as follows. 

In the induction, $g^*\,$s act on pairs  $(u,v)$: $u$ (resp.~$v$) from  the
permuted set of $$\lrang{g_{a_i,j}, 1\le i\le k\nm1, 1\le j\le2} \text{ (resp.~}\lrang{g_{a_k,j}, 1\le j\le 2}).$$
This is the tensor notation in
\S\ref{galFiber}.  Form the elements
$g^*_{a_j,t}$, $t\in \{1,2\},j\le k-1$, by replacing any cycle
$(u\,u')$ in $g_{a_j,t}$ by $\prod_{i_{a_k}=1}^n((u,i_{a_k})\,(u',(i_{a_k})\pi))$ with
$\pi\in S_n$. 

With $\pi=1$, list as rows orbits of the product 
$g^*_{a_1,1}\cdot g^*_{a_1,2}\cdot \dots \cdot g^*_{a_{k\nm1},1}\cdot g^*_{a_{k\nm1},2}$. Call
this row display $R_{n,k\nm1}$. Here is $R_{3,1}$,  
$n=3$, $k-1=1$: 
$$ \begin{array} {rl} &(1,1)\to (2,1)\to (3,1) \\
&(1,2)\to (2,2)\to (3,2)\\
&(1,3)\to (2,3)\to (3,3).\end{array}$$ 

Now consider the corresponding extension
$g^*_{a_k,1}, g^*_{a_k,2}$ of $g_{a_k,1}, g_{a_k,2}$  by replacing any disjoint cycle $(i\,i')$ for one of
$g_{a_k,1}, g_{a_k,2}$ with  
$\prod_{u\in V_{\row a {k\nm1}}} ((u,i)\,((u)\tau,i'))$ with $\tau$ a permutation on
$V_{\row a {k\nm1}}$.

Whatever is our choice in this last case we can read off the effect of the product of 
the $g^*$ entries by considering the orbits of this in the table $R_{n,k\nm1}$. We 
know the group generated by the $g^*\,$s is to be transitive, and all these orbits will
proceed from left to right and be of length $n$. Conclude, that up to a reordering of
the rows and a cycling of each row (it was up to us where we started the row), the orbit
path in
$R_{n,k\nm1}$ takes the shape of a stair case to the right. Example, $n=3$, $k-1=1$, the
product of the $g^*$ entries starting at $(1,1)$ would give $(1,1)\to (2,2)\to (3,3)$ as
an orbit. So, the conditions of \eqref{bcConds} determine $g^*_{a_k,1}, g^*_{a_k,2}$.  

To conclude the proof we have only to show the covers $T_{n,a}+b$ are
quotients of the limit cover for $\sD'_{n,q}$. The branch points of $\sT_{n,a}+b$ are at
$\pm u+b$ in the previous notation. We show  the cover $T_{n,a}+b$ is a
quotient of the exceptional cover fiber product of $T_{\pm(u+b)}$ and $T_{\pm(b-u)}$, the
degree $n$ Dickson polynomials with branch points at
$\pm(u+b)$ and $\pm(b-u)$ respectively. 

This fiber product has branch points at $\pm(u+b)$, $\pm(b-u)$, and $\infty$, and
branch cycles $(g_{1,1},g_{1,2},g_{2,1},g_{2,2},g_\infty)=\bg$ with branch points
$u+b,b-u$ corresponding to $g_{1,2},g_{2,1}$ at the 2nd and 3rd positions. Let $G$ be the
geometric monodromy of this fiber product, with
$T'$ and $T''$ the permutation representations from
$T_{\pm(u+b)}$ and $T_{\pm(b-u)}$. All we need is some representative in the absolute
equivalence class of this branch cycle with the shape
$(g_1',g_{1,1},g_{1,2},g_4',g_\infty)$ for some $g_1',g_4'$. Then, $T'$ applied to this gives branch
cycles for
$\sT_{n,a}+b$ (the same for $T_{\pm(u+b)}$ but with branch points at the appropriate
places). Apply the braid $q_2q_1\in H_5$ (as in \eqref{HurMon}) to  $\bg$:
$$(\bg)q_2q_1=(g_{1,1}, g_2',g_{1,2}, g_{2,2},g_\infty)q_1=(g_1',g_{1,1},g_{1,2}, g_{2,2},g_\infty)$$ with 
$g_2'=g_{1,2}g_{2,1}g_{1,2}^{-1}$ and $g_1'=g_{1,1}g_2'g_{1,1}^{-1} $. We already
know this represents the same element in the Nielsen class as $\bg$.   
\end{proof} 

\begin{prob} Use Prop.~\ref{branchCycleqn}  to describe the
limit branch cycles for $\text{Sc}_{\bF_q}$. \end{prob}

\section{Introduction to the subtowers in \cite{exceptTow}} \label{serreOpenIm} Serre's open image theorem (OIT) 
\cite{SeAbell-adic} forces a divide between two types, $\GL_2$ and
$\CM$, of  contributions to the genus 0 covers in the 
$\sT_{\prP^1_z,\bF_q}$ tower. We concentrate on the  mysterious $\GL_2$
part, limiting to topics around one serious question: Decomposition of rational functions and their
relation to exceptional covers in \S\ref{indecRat}. 

Any one elliptic curve $E$ without complex multiplication
produces a collection of $\{f_{p,E}\}_{p> c_E}$ for some constant $c_E$ with these properties. Each 
$$f_{p,E}
\mod
\ell:
\prP^1_x\to
\prP^1_y\text{ is indecomposable and  exceptional,}$$  but it decomposes over $\bar \bF_\ell$.
\S\ref{expPrExc} then considers using automorphic functions to give a useful description of primes
$\ell$ for which a given
$f_{p,E}$ has these properties. Finally,
\S\ref{wildRamExc}  sets straight a precise development about wildly ramified exceptional covers that several
sources have garbled. Using this to describe the wildly ramified part of exceptional subtowers generated by genus 0
covers continues in
\cite{exceptTow}.  

\subsection{Tame exceptional covers from modular curves} \label{restGen0tame} 
\cite[\S6.2]{luminy} will continue in \cite{exceptTow}. The former is the Modular Tower setup of Serre's OIT. 
This framework shows there are other Modular Towers whose levels  are
$j$-line covers (though not  modular curves) having cases akin to
$\GL_2$ and 
$\CM$.  

\subsubsection{Setup for indecomposability applications} 
The affine line  $\prP^1_j\setminus \{\infty\}=U_\infty$ identifies with the quotient 
$S_4\backslash(\prP^1_z)^4\setminus
\Delta/\PGL_2(\bC)$ (\S\ref{notation}).  
For $p>1$ an odd prime, and  
$K$ a number field, infinitely many $K$ points on
$U_\infty$ produce rational functions of degree $p^2$ with 
these  properties. 
\begin{edesc} \label{apOIT} \item \label{apOITa} They are indecomposable over $K$,  yet decompose over $\bar K$
(\S\ref{indecRat}).
\item \label{apOITb} Modulo almost all primes they give tamely ramified rational functions with property
\eql{apOIT}{apOITa} over finite fields. 
\item \label{apOITc} They give
exceptional covers (as in \S\ref{1-K}) with nonsolvable extension of constants group.
\end{edesc}

 Most
from the remaining genus 0, tame exceptional covers are 
related to   \eqref{apOIT} \cite[\S2]{FrGGCM}.  \cite{GMS} concentrated
more on the CM type, because there are hard problems with being explicit in the $\GL_2$ case. \S\ref{expPrExc}
gives specific examples of those problems.  Ribet's words  
\cite{ribet} from 14 years ago on \cite{SeAbell-adic}  still apply:   
\begin{quote} Since the publication of Serre's book in 1968, there have been numerous
advances in the theory of $\ell$-adic representations [of absolute Galois groups] attached
to abelian varieties [He lists Faltings' proof of the semisimplicity of the representations;
and ideas suggested by Zarhin]. \dots Despite these recent developments, the 1968 book of
Serre is hardly outmoded. \dots it's the only book on the subject [\dots and] it can be
viewed as a toolbox [of] clear and concise explanations of fundamental topics [he lists
some].
\end{quote}

\newcommand{\Q}[1]{\text{Q}^{{#1}}}

\subsubsection{Sequences of nonempty Nielsen classes} We briefly remind how \cite[\S6]{luminy} 
formulates additional examples that have OIT properties using a comparison with OIT. You can skip this 
without harm for the indecomposability applications of \S\ref{indecRat}.  Consider the following objects:
$F_2=\lrang{x_1,x_2}$, the free group on two generators;  $J_2=\bZ/2=\{\pm 1\}$ 
acting as
$x_i\mapsto x_i^{-1}$,
$i=1,2$, on $F_2$; and $P_2$, all primes different from 2. Denote the 
{\sl nontrivial\/} finite
$p$ group quotients of
$F_2$ on which $J_2$ acts, with
$p\not\in P_2$, by 
$\Q {F_2}(P_2)\eqdef Q^{F_2}(P_2,J_2)$. 

Use the notation $\bfC_{2^4}=\bfC$ for  four repetitions of the nontrivial conjugacy class
of
$J_2$.  For any $U\in \Q {F_2}(P_2,J_2)$, $\bfC$ lifts uniquely to conjugacy classes of
order 2 in
$U\xs J_2$. This defines a collection of Nielsen classes: 
$$\sN= \{\ni(G,\bfC_{2^4})^\inn\}_{\{G=U\xs J_2\mid  U\in
\Q {F_2}(P_2,J_2)\}}.$$

Suppose for some $p$, $\sG_{p,I}=\{U_i\}_{i\in I}$ is a projective subsequence of 
(distinct) $p$ groups from $\Q {F_2}(P_2)$. Form a limit group  $G_{p,I}=\lim_{\infty
\leftarrow i} U_i\xs J_2$. Assume further, all Nielsen classes
$\ni(U_i\xs J_2,\bfC)$ are nonempty. Then,  $\{\ni(U_i\xs J_2,\bfC)^\inn\}_{i\in I}$
is  a project system with a nonempty limit $\ni(G_{p,I},\bfC)$. 

\subsubsection{Achievable Nielsen classes from modular curves} 
Let $\bz=\{\row z
4\}$ be any four distinct points of $\prP^1_z$, without concern to order. As in
\S\ref{nc1},  
 choose a set of (four) classical generators for the fundamental group of
$\prP^1_z\setminus
\bz=U_\bz$. 

This group identifies with the free group on four generators $\psigma=(\row
\sigma 4)$, modulo the product-one relation $\sigma_1\sigma_2\sigma_3\sigma_4=1$. Denote
its completion with respect to all normal subgroups for which the kernel to $J_2$ is $2'$ (has order prime to 2) by
$\hat  F_\psigma$. Let $\bZ_p$ (resp.~$\hat F_{2,p}$) be the similar completion of $\bZ$
(resp.~$F_2$) by all normal subgroups with $p$ ($\not=2$) group quotient. 
The following is \cite[Prop.~6.3]{thompson}. 

\begin{prop} \label{H2NC}  Let $\hat D_\psigma$ be
the quotient of $\hat  F_\psigma$ by the relations $$\sigma_i^2=1,\   i=1,2,3,4\ (\text{so\
}\sigma_1\sigma_2=\sigma_4\sigma_3).$$ Then, $\prod_{p\ne 2}\bZ^2_p\xs J_2\equiv 
\hat D_\psigma$. Also, $\bZ^2_p\xs J_2$ is the unique $\bfC_{2^4}$ $p$-Nielsen class
limit. 
\end{prop}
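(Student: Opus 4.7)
The plan decomposes into three steps: (i) give an explicit presentation of $D_\psigma$ as a semidirect product, (ii) translate the $2'$-completion to a statement about $F_2$, and (iii) identify each pro-$p$ piece as $\bZ_p^2$ via the Nielsen-class constraint.

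For (i), use the product-one relation together with $\sigma_i^2 = 1$ to eliminate $\sigma_4 = \sigma_3\sigma_2\sigma_1$; this presents $D_\psigma$ as the free product $\bZ/2 * \bZ/2 * \bZ/2$ on generators $\sigma_1,\sigma_2,\sigma_3$. The signature map $\epsilon\colon D_\psigma \to J_2$ sending each $\sigma_i \mapsto -1$ has kernel $F$, the subgroup of even-length reduced words. A Bass--Serre argument on the tree of the free product (or a direct reduced-word computation) identifies $F$ as free of rank two, with basis $y_1 = \sigma_1\sigma_2$, $y_2 = \sigma_1\sigma_3$; conjugation by $\sigma_1$ inverts both. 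Hence $D_\psigma \cong F_2 \xs J_2$ with $J_2$ acting by inversion on the two generators.

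For (ii), a normal subgroup $N \triangleleft D_\psigma$ lies in the defining system of $\hat D_\psigma$ iff $M := N \cap F_2$ is a $J_2$-stable normal subgroup of $F_2$ of odd finite index, via the bijection $D_\psigma/N \cong (F_2/M) \xs J_2$. So $\hat D_\psigma = \hat F_2^{J_2, 2'} \xs J_2$, where $\hat F_2^{J_2, 2'}$ is the inverse limit over such $M$. Sylow-decomposing each finite quotient gives $\hat F_2^{J_2, 2'} = \prod_{p \neq 2} \hat F_2^{J_2, p}$, reducing the isomorphism claim to $\hat F_2^{J_2, p} \cong \bZ_p^2$ for each odd prime $p$.

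For (iii), an element of $\ni(U \xs J_2, \bfC_{2^4})$ is a 4-tuple of involutions $\sigma_i = (h_i, -1)$ generating $U \xs J_2$ with $\sigma_1\sigma_2\sigma_3\sigma_4 = 1$; equivalently, $J_2(h_i) = h_i^{-1}$ for each $i$ and $h_1 h_2^{-1} h_3 h_4^{-1} = 1$. Given a projective system $\{U_i\}$ of $J_2$-equivariant $p$-group quotients with nonempty compatible Nielsen classes, I would run a Frattini-obstruction argument along each central step $1 \to Z \to \tilde U \to U \to 1$: lifting the involution and product-one conditions produces an obstruction in $H^2(J_2, Z)$, and the commutator pairing (taking the $-1$ eigenspace on $U^{ab}$ into the $+1$ eigenspace on $Z$) shows this obstruction is nonzero whenever $Z$ is nontrivial. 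Hence any Nielsen-compatible projective system factors through the pro-$p$ abelianization $\bZ_p^2$, establishing both the isomorphism and uniqueness.

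The main obstacle is making the obstruction in Step (iii) precise. A concrete test case is the Heisenberg $p$-group $H$ on which $J_2$ acts by $x \mapsto x^{-1},\ y \mapsto y^{-1}$ (forcing $J_2$ to fix $[x,y]$); one must verify that the combined system $\{J_2(h_i) = h_i^{-1},\ h_1 h_2^{-1} h_3 h_4^{-1} = 1\}$ admits no solution generating $H \xs J_2$. This reduces to a quadratic identity in the center of $H$, and the general pro-$p$ step follows by induction along the Frattini filtration. With $\hat F_2^{J_2, p} = \bZ_p^2$ in hand, the isomorphism $\hat D_\psigma \cong \prod_{p \neq 2} \bZ_p^2 \xs J_2$ and the uniqueness of $\bZ_p^2 \xs J_2$ as the $\bfC_{2^4}$ $p$-Nielsen class limit both follow.
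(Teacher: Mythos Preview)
Your step (i) contains the critical error. You eliminate $\sigma_4 = \sigma_3\sigma_2\sigma_1$ via product-one, but you then drop the relation $\sigma_4^2=1$. That relation is \emph{not} a consequence of $\sigma_1^2=\sigma_2^2=\sigma_3^2=1$: it says $(\sigma_1\sigma_2\sigma_3)^2=1$. In your basis $y_1=\sigma_1\sigma_2$, $y_2=\sigma_1\sigma_3$ of the signature kernel, write $\sigma_1\sigma_2\sigma_3=\sigma_1\,y_1^{-1}y_2$; then
\[
(\sigma_1\sigma_2\sigma_3)^2 \;=\; J_2(y_1^{-1}y_2)\cdot y_1^{-1}y_2 \;=\; y_1y_2^{-1}y_1^{-1}y_2,
\]
whose normal closure in $F_2$ is all of $[F_2,F_2]$. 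So $D_\psigma\cong \bZ^2\xs J_2$, not $F_2\xs J_2$. With this correction the proposition is almost immediate: the relevant completion of $\bZ^2$ is $\prod_{p\ne2}\bZ_p^2$, and a Nielsen element in $\ni(U\xs J_2,\bfC_{2^4})$ is exactly a surjection $D_\psigma\twoheadrightarrow U\xs J_2$ over $J_2$, which forces $U$ to be a quotient of $\bZ^2$, hence abelian. This is the paper's route (it phrases the \lq\lq if and only if\rq\rq\ as: abelian $U$ give nonempty Nielsen classes, nonabelian $U$ give empty ones).

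Your step (iii) is thus unnecessary, and your \lq\lq main obstacle\rq\rq\ is exactly where the mistake would have surfaced. If you run your Heisenberg test honestly you will find the Nielsen class empty, but not via a subtle $H^2(J_2,Z)$ obstruction: with $h_1=1$, $h_2=x^{-1}$, $h_3=y^{-1}$ you get $\sigma_4\mapsto(y^{-1}x,-1)$, and $(y^{-1}x,-1)^2=(y^{-1}xyx^{-1},1)=(z^{\pm1},1)\ne1$. The failure is precisely the forgotten relation $\sigma_4^2=1$. (Separately, your step (ii) claim that the odd completion Sylow-decomposes into pro-$p$ pieces is false for a general 2-generated group of odd order, which need not be nilpotent; it only becomes true once you know the kernel is abelian.)
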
 

As an if and only if statement,  it 
has two parts (\S\ref{nonempNielsen2}):  A Nielsen
class from an abelian $U\in \Q {F_2}(P_2)$ (resp.~nonabelian $U$) is nonempty
(resp.~empty). 

\begin{rem}[For those more into
Nielsen classes] The major point of
\cite{thompson} starts by contrasting this $J_2$ case with with an
action of
$J_3=\bZ/3$ on $F_2$ (illustrating a general situation). The exact analog
there has all Nielsen classes nonempty \cite[Prop.~6.5]{thompson}. It also conjectures
\wsp special case of a general conjecture \wsp that each $H_4$ (\eqref{HurMon}, the group $H_r$ with $r=4$) orbit on
those limit Nielsen classes contains a Harbater-Mumford representative: Element of the form $(g_1,g_1^{-1},
g_2,g_2^{-1})$.  We know the
$H_4$ orbits precisely for the
$J_2$ case (\S\ref{nonempNielsen2}). \end{rem}

\subsubsection{Nature of the nonempty Nielsen classes in Prop.~\ref{H2NC}}
\label{nonempNielsen2} Denote an order 2 element in $G_{p^{k+1}}=(\bZ/p^{k+1})^2\xs \{\pm
1\}$ by 
$(-1;\bv)$ with $\bv\in (\bZ/p^{k+1})^2$. An explicit  $\bv$ has the form $(a,b)$,
$a,b\in \bZ/p^{k+1}$. The multiplication $(-1;\bv_1)(-1;\bv_2)$ yields $\bv_1-\bv_2$ as one would
expect from formally taking the matrix product $$\smatrix {-1} {\bv_1} 0 1  \smatrix {-1} {\bv_2} 0
1 \text{ as in \eqref{rightAct}.}$$ 

We have an explicit description of the Nielsen classes
$\ni(G_{p^{k+1}},\bfC_{2^4})$. Elements are 4-tuples $((-1;\bv_1),\dots,(-1;\bv_4))$
satisfying two conditions from \S\ref{nc1}:
\begin{edesc} \label{nconds} \item \label{ncondsa}  Product-one: $\bv_1-\bv_2+\bv_3-\bv_4$; and
\item \label{ncondsb}  Generation:  $\lrang{\bv_i-\bv_j, 1\le i< j\le 4}=(\bZ/p^{k+1})^2$.
\end{edesc} By  conjugation in $G_{p^{k+1}}$ we may assume $\bv_1=0$. Now take $\bv_2=(1,0)$, 
$\bv_3=(0,1)$ and solve for $\bv_4$ from \eql{nconds}{ncondsa}.  

Prop.~\ref{SerreRes1} explains subtleties on the inner and absolute Nielsen classes in this case.
For  
$V=V_{p^{k+1}}=(\bZ/p^{k+1})^2$, $V\xs \GL_2(\bZ/p^{k+1})$ is the normalizer of $G_{p^{k+1}}$ in $S_V$ 
 (notation of \S\ref{crypt}). Let $\ni(G,\bfC)$ be a Nielsen class (with $\bfC$
a rational union of conjugacy classes) and assume there is a permutation representation
$T: G\to S_n$. There is always a natural map
$\Psi:
\sH(G,\bfC)^{\inn}\to 
\sH(G,\bfC)^{\abs}$ (or $\Psi^\rd$) on the reduced spaces (\S\ref{nc2}). Restricted to any $\bQ$ component
of
$\sH(G,\bfC)^{\inn}$, $\Psi$ is Galois with group a subgroup of $N_{S_n}(G)/G$  
\cite[Thm.~1]{FrVMS}. For the Nielsen class from $(G_{p^{k+1}},\bfC_{2^4})$, etc.~denote this map $\Psi_{p^{k+1}}$. 

\begin{prop} \label{SerreRes1} The following properties hold for these absolute classes.
\begin{edesc} \label{absProp1} \item  \label{absProp1a} 
$|\ni(G_{p^{k+1}},\bfC_{2^4})^\abs|=1$, so 
$\sH(G_{p^{k+1}},\bfC_{2^4})^{\abs,\rd}$ identifies with $U_\infty$.
\item  \label{absProp1b} Rational functions of degree
$(p^{k+1})^2$ represent $\ni(G_{p^{k+1}},\bfC_{2^4})^\abs$ covers.  
\end{edesc} 
The following properties hold for these inner classes. 
\begin{edesc} \label{innProp1} \item  \label{innProp1a} $H_4$ has $p^{k+1}-p^k$ orbits  on
$\ni(G_{p^{k+1}},\bfC_{2^4})^\inn$. 
\item  \label{innProp1b}  $\Psi_{p^{k+1}}$ (or $\Psi_{p^{k+1}}^\rd$) is Galois with group
$\GL_2(\bZ/p^{k+1})/\{\pm1\}$.  
\item  \label{innProp1c} Fix $j'\in U_\infty(\bQ)$ without complex multiplication. Then,  excluding
a finite set
$P_{j'}$ of primes $p$, the fiber of $\Psi_{p^{k+1}}^\rd$ over $j'$ is irreducible. 
\end{edesc} 
\end{prop}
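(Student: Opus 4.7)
The plan is to work directly with the explicit parametrization of $\ni(G_{p^{k+1}},\bfC_{2^4})$ given just before the statement. After conjugation by an element of $V_{p^{k+1}}$ I may assume $\bv_1=0$; \eql{nconds}{ncondsa} then determines $\bv_4=\bv_3-\bv_2$ while \eql{nconds}{ncondsb} becomes the assertion that $\{\bv_2,\bv_3\}$ generates $V_{p^{k+1}}=(\bZ/p^{k+1})^2$. So inner Nielsen classes are in bijection with ordered generating pairs of $V_{p^{k+1}}$, modulo the residual action of the center $\{\pm1\}\le G_{p^{k+1}}$ sending $(\bv_2,\bv_3)\mapsto(-\bv_2,-\bv_3)$. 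I treat the absolute items \eql{absProp1}{absProp1a}--\eql{absProp1}{absProp1b} first, then \eql{innProp1}{innProp1b}, \eql{innProp1}{innProp1a}, and \eql{innProp1}{innProp1c}.

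For \eql{absProp1}{absProp1a}, the absolute class is the further quotient of the inner class by $N_{S_V}(G_{p^{k+1}})/G_{p^{k+1}}$, with $T$ acting on $V=V_{p^{k+1}}$ (cosets of $\{\pm1\}\le G_{p^{k+1}}$). That quotient is $\GL_2(\bZ/p^{k+1})/\{\pm 1\}$, acting simply transitively on ordered generating pairs of $V_{p^{k+1}}$ modulo $\{\pm1\}$; hence $|\ni(G_{p^{k+1}},\bfC_{2^4})^\abs|=1$, and the reduced absolute Hurwitz space identifies with $U_\infty$. For \eql{absProp1}{absProp1b}, Riemann--Hurwitz (\S\ref{RiemHur}) uses that each involution $(-1;\bv)\in G_{p^{k+1}}$ has the single fixed point $2^{-1}\bv\in V$ (valid since $p$ is odd) and acts in $2$-cycles elsewhere, giving $\ind((-1;\bv))=(n-1)/2$ for $n=(p^{k+1})^2$. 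Summing over four branch cycles, $2(n+g-1)=4\cdot(n-1)/2$ forces $g=0$, so the absolutely irreducible total space is $\prP^1$ and the cover is a rational function of degree $(p^{k+1})^2$.

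For \eql{innProp1}{innProp1b}, \cite[Thm.~1]{FrVMS} identifies the Galois group of any $\bQ$-component of $\Psi_{p^{k+1}}^\rd$ with a subgroup of $N_{S_V}(G_{p^{k+1}})/G_{p^{k+1}}=\GL_2(\bZ/p^{k+1})/\{\pm1\}$; equality follows from a $\bQ$-component dominating $U_\infty$ with the full normalizer realized as deck transformations. Then for \eql{innProp1}{innProp1a}, the $H_4$-orbits on $\ni(G_{p^{k+1}},\bfC_{2^4})^\inn$ count the geometrically connected components of $\sH(G_{p^{k+1}},\bfC_{2^4})^{\inn,\rd}$. Under the parametrization above, each Hurwitz braid acts on $(\bv_2,\bv_3)$ by a linear change of basis of determinant $1$ in $\GL_2(\bZ/p^{k+1})$, so the determinant-like invariant $\bv_2\wedge\bv_3\in(\bZ/p^{k+1})^*$ is $H_4$-invariant and descends to the quotient by $\{\pm 1\}$. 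A direct check shows this invariant is the only obstruction to joining two generating pairs by braids, so its $p^{k+1}-p^k$ distinct values each give a single $H_4$-orbit.

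Finally, for \eql{innProp1}{innProp1c}, fix a non-CM elliptic curve $E/\bQ$ with $j(E)=j'$. Identify the tower $\{\sH(G_{p^{k+1}},\bfC_{2^4})^{\inn,\rd}\to U_\infty\}_k$ with the modular tower classifying elliptic curves with full level $p^{k+1}$ structure modulo $\{\pm1\}$, via the hyperelliptic map $E\to E/\{\pm1\}\equiv\prP^1$ branched at the four $2$-torsion points. The fiber of $\Psi_{p^{k+1}}^\rd$ over $j'$ then becomes the $G_\bQ$-set of $\{\pm1\}$-orbits on ordered bases of $E[p^{k+1}]$. Serre's OIT \cite{SeAbell-adic} gives that the image of $G_\bQ\to\GL_2(\bZ_p)$ is open for non-CM $E$ and equals $\GL_2(\bZ_p)$ outside a finite set $P_{j'}$; for such $p$, $G_\bQ$ acts transitively on the fiber, giving its irreducibility. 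The main obstacle is the arithmetic identification of the Hurwitz tower with the modular tower (matching extensions of constants via the Branch Cycle Lemma, \S\ref{BCL}); once that is set up, everything else assembles from Serre's theorem and the Galois-group computation of \eql{innProp1}{innProp1b}.
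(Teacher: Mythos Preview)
Your argument is correct and follows essentially the same route as the paper's ``Comments on using the proposition.'' Both normalize to $\bv_1=\pmb 0$, use the transitive $\GL_2(\bZ/p^{k+1})$ action on ordered generating pairs to get $|\ni^\abs|=1$, compute $g=0$ by Riemann--Hurwitz, identify the $H_4$-action on $(\bv_2,\bv_3)$ as lying in $\SL_2(\bZ/p^{k+1})$ (the paper exhibits $q_2$; your wedge invariant $\bv_2\wedge\bv_3$ is the resulting orbit separator), and invoke Serre's OIT through the identification with $[p^{k+1}]:E\to E$ modded out by $\{\pm1\}$. The only point where you are slightly more explicit than the paper is the index computation $\ind((-1;\bv))=(n-1)/2$; conversely the paper spells out the elliptic-curve identification of $\phi_\bp$ a bit more concretely than your final paragraph. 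Your phrase ``a direct check shows this invariant is the only obstruction'' and the paper's ``these actions are in $\SL_2(\bZ/p^{k+1})$'' both leave implicit the (true, classical) fact that the braid matrices generate all of $\SL_2(\bZ/p^{k+1})$, which is what pins the orbit count at exactly $p^{k+1}-p^k$.
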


\begin{proof}[Comments on using the proposition] Use the symbol $(\row {\bv} 4)$
to denote the Nielsen element $((-1;\bv_1),\dots,(-1;\bv_4))$. Conjugating by
$\beta\in \GL_2(\bZ/p^{k+1})$ on this Nielsen element maps it to $(\beta(\bv_1),\dots,
\beta(\bv_4))$.  Conjugating by $(1,\bv)$ translates by $(\bv,\bv,\bv,\bv)$. So, now we
may take
$\bv_1={\pmb 0}$. That there is one absolute class follows from transitive action of
$\GL_2(\bZ/p^{k+1})$ on pairs
$(\bv_2,\bv_3)$, whose entries are now forced to be independent if they are to represent an element of the Nielsen
class. 

On the other hand, consider the action of the $q\,$s in $H_4$. Example: $q_2$ applied to
the symbol $(\row {\bv} 4)$ gives  $(\bv_1,2\bv_2-\bv_3,\bv_2,\bv_4)$.  So these actions
are in $\SL_2(\bZ/p^{k+1})$. Any cover in the Nielsen class has odd degree $(p^{k+1})^2$
and genus 0 as computed by Riemann-Hurwitz. Take $j'\in \bQ$ to be the $j$-invariant of the branch point
set corresponding to the cover. Conclude, there is a
rational function
$f_{j'}:\prP^1_w\to
\prP^1_z$ representing this odd degree genus 0 cover.

According to 
\cite[IV-20]{SeAbell-adic} we can say explicit things about the fibers of
$\sH(G_{p^{k+1}},\bfC_{2^4})^\inn\to
\sH(G_{p^{k+1}},\bfC_{2^4})^\abs$ over $\bp\in \sH(G_{p^{k+1}},\bfC_{2^4})^\abs$ depending on the $j$-value of
the 4 branch points for the cover $\phi_\bp: X_\bp\to \prP^1_z$ corresponding to $\bp$. 
\S\ref{OggsEx} and
\S\ref{indecRat} show our special interest in such covers over $\bQ$ with the full arithmetic
monodromy group 
$V_{p^{k+1}}\xs
\GL_2(\bZ/p^{k+1})$. 

We now note what is the cover $\phi_\bp$. Let $E$ be any elliptic curve in Weierstrass
normal form, and
$[p^{k+1}]: E \to E$ multiplication by $p^{k+1}$. Mod out by the action of $\{\pm 1\}$ on both
sides of this isogeny to get $$E/\{\pm 1\}=\prP^1_w\mapright{\phi_{p^{k+1}}} E/\{\pm
1\}=\prP^1_z,$$ a degree
$p^{2(k+1)}$ rational function. Composing  $E\to E/\{\pm 1\}$ and multiplication by
$p^{2(k+1)}$ gives the Galois closure of $\phi_{p^{k+1}}$. This is a geometric proof why
$\ni((\bZ/p^{k+1})^2\xs J_2,\bfC_{2^4})$ is nonempty. If $E$ has definition field $K$, so does
$\phi_{p^{k+1}}$. We may, however, expect the Galois closure field of $\phi_{p^{k+1}}$ to have an
interesting set of constants coming from the fields of definition of $p^{k+1}$ division points on
$E$.  
 
The geometric group is
$(\bZ/p^{k+1})^2\xs
\{\pm 1\}$ acting as permutations on $(\bZ/p^{k+1})^2$.  
This group isn't primitive because $\{\pm 1\}$ does not act irreducibly. On each side of the degree $p^2$ isogeny 
$E\mapright{[p]} E$, mod out by
$\{\pm1\}$. If $E$ has no complex multiplication but a number field as definition field, then 
for almost all primes $p$, 
\begin{triv} \label{arithMonsur} the arithmetic monodromy group is $(\bZ/p)^2\xs
\GL_2(\bZ/p)$: and for $p^{k+1}$ it is $(\bZ/p^{k+1})^2\xs
\GL_2(\bZ/p^{k+1})$. \end{triv} 
\end{proof} 

\begin{rem}[More on explicitness] The proof of \cite[IV-20]{SeAbell-adic} concludes the proof
of\eqref{arithMonsur} for non-integral (so not complex multiplication) $j$-invariant.  Serre's initial
proof of
\eql{innProp1}{innProp1c} for almost all primes for integral (not complex multiplication)
$j$-invariant relied on unpublished results of Tate. Though Falting's theorem now replaces that, it is still not
explicit. So even today, being explicit on the exceptional primes in Prop.~\ref{SerreRes1} still requires
non-integral $j$-invariant. (Note, however, comments of \S\ref{automorConn} from Serre's using modularity
of an elliptic curve.) 
\end{rem}

\subsection{Indecomposability changes from $K$ to $\bar K$} \label{indecRat} \S\ref{indecRat1} notes that 
finding the minimal field over which one may decompose 
rational functions, or any cover $\phi: X\to Y$, is a problem in identifying a specific
subfield $K_\phi(\ind)$ of $\hat K_\phi$ (\S\ref{galClosure}). For tamely ramified covers, Prop.~\ref{GL2pprop} shows
the OIT  is the main producer of rational functions $\phi=f: \prP^1_x\to \prP^1_z$ over a number field (or over a
finite field) where $K_\phi(\ind)$ will nontrivially extend the constant field. 

\subsubsection{The indecomposability field} \label{indecRat1} Two ingredients go into a test for 
indecomposability of any cover $\phi:
X\to Y$. These are a use of fiber products and a test for reducibility in
the following way. Check $X\times_Y X$ minus the diagonal for irreducible components
$Z$ which have the form $X'\times_Y X'$. If there are none, then
$\phi$ is indecomposable. Otherwise, $\phi$ factors through $X'\to Y$.

\cite[Thm.~2.3, Thm.~4.2]{FrMacRae} used the polynomial cover case of this 
when the degree was prime to $p$. As a result for that case, there is a
maximal proper variables separated factor. \cite{AGR} exploited \cite{FrMacRae} similarly for rational
functions. Denote the minimal Galois extension of $K$ over which $\phi$ decomposes into
absolutely indecomposable covers by $K_\phi(\text{ind})$: The indecomposability field of
$\phi$. Conclude the following. 

\begin{prop} \label{indecfield} For any cover $\phi: X\to Y$ over a field $K$,
$K_\phi(\ind)\subset \hat K_\phi(2)$. \end{prop}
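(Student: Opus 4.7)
The plan is to prove the stronger assertion that $\phi$ already admits a decomposition into absolutely indecomposable covers over $L := \hat K_\phi(2)$; since $L/K$ is Galois (as recorded in \S\ref{itConsts}), the minimality in the definition of $K_\phi(\ind)$ then forces $K_\phi(\ind)\subseteq \hat K_\phi(2)$.

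First I would fix over $\bar K$ any maximal decomposition chain
\[ X = X_0 \to X_1 \to \cdots \to X_k = Y \]
in which each step $X_i \to X_{i+1}$ is absolutely indecomposable. Via the standard imprimitivity correspondence this chain is the same data as a chain of subgroups $G_\phi(1)=G_0\lneq G_1\lneq\cdots\lneq G_k=G_\phi$ with no proper subgroup between consecutive terms. The existence of at least one such chain is immediate; uniqueness of the chain is not needed.

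Next I would connect each $X_i$ to a specific piece of the fiber square via the decomposition test recalled just before the proposition. Namely, $X\times_{X_i}X\subseteq X\times_Y X$ is exactly the union of the diagonal together with those geometric components of $X^2_Y\setminus\Delta$ whose generic pair $(v,w)$ satisfies $w\in G_i\cdot v$. By the very definition of $\hat K_\phi(2)$ in \S\ref{itConsts}, every absolutely irreducible $\bar K$-component of $X^2_Y\setminus\Delta$ is individually stable under $\mathrm{Gal}(\bar K/L)$; hence any union of such components is $L$-rational, and in particular $X\times_{X_i}X$ descends to an $L$-subvariety of the $K$-variety $X\times_Y X$. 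With the finite equivalence relation $\sim_i$ on $X$ so described over $L$, I would recover $X_i$ as the categorical quotient $X/\!\sim_i$ over $L$ by Galois descent, observing that its base change to $\bar K$ gives the original $X_i$, that $X_i\to Y$ is an $L$-factorization of $\phi$, and that absolute indecomposability of each step, being a property tested over $\bar K$, is unchanged.

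The main obstacle, and the only step where genuine care is required, is the descent of $X/\!\sim_i$ to $L$: one needs that a finite $L$-rational equivalence relation on a normal quasiprojective variety has a categorical quotient over $L$ compatible with base change to $\bar K$. For covers of normal curves this is just normalization in an intermediate function field, and in higher dimension it reduces to Galois descent on affine charts of the normal quotient; cleanly setting this up uniformly in dimension is where the bookkeeping sits. Everything else amounts to translating between block systems, intermediate subgroups $G_i$, and unions of geometric components of $X^2_Y\setminus\Delta$, so the proof is short once this descent step is in place.
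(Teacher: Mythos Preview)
Your proposal is correct and follows essentially the same route as the paper. The paper's own argument is the short paragraph immediately preceding the proposition: it records that intermediate factors of $\phi$ are detected by components of $X\times_Y X\setminus\Delta$ of the form $X'\times_Y X'$, and then says ``Conclude the following'' --- so the proposition is meant to be read off directly from that decomposition test together with the definition of $\hat K_\phi(2)$. You have spelled out in detail exactly what the paper leaves implicit, in particular the identification of $X\times_{X_i}X$ with a union of absolute components of $X^2_Y$ and the descent of the quotient to $L$; the underlying idea is the same.
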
 

\subsubsection{Ogg's example} \label{OggsEx} 
\cite[IV-21-22]{SeAbell-adic} outlines computing  $\rho_{3^+,p}(G_\bQ)$, the
image of
$G_\bQ$ on the $p$-division points $E[p]$ of an elliptic curve $E$, for a case of $E$ where we can list
$p\,$s that are exceptions to \eqref{arithMonsur}. 

The curve $3^+$ of \cite{Ogg} has affine model $\{(x,y)\mid y^2+x^3+x^2+x=0\}$
with
$j$ invariant 
$2^{11}\cdot 3^{-1}$, discriminant $-2^4\cdot 3$
and conductor 24. It also has an isogeny of degree 2 to the modular curve $X_0(24)$. The nontrivial 
degree 2 isogeny shows the image $\rho_{3^+,2}(G_\bQ)$ of $G_\bQ$ is not $\GL_2(\bZ/2)$, and
the image has order 2, corresponding to the field extension $\bQ(\sqrt{-3})$. For, however,
$p\ne 2$, he shows the following. 
\begin{itemize} \item Determinant on $\rho_{3^+,p}(G_\bQ)$ has image $\bF_p^*$
(because the base is
$\bQ$). \item $\rho_{3^+,p}(G_\bQ)$ has a transvection (use Tate's form of $3^+$ for $p=3$:
$3^{1/p}\in E$ revealing the tame inertia group generator acts as a
transvection). \end{itemize} If we know $G_\bQ$ acts irreducibly for $p$, then \cite[IV-20,
Lem.~2]{SeAbell-adic} says the complete action is through $\GL_2(\bZ/p)$. All we need 
is to assure, from the irreducible action, the transvection $\smatrix 1 1 0 1$
conjugates to $\smatrix 1 0 1 1$, and these two generate
$\SL_2(\bZ/p)$. 

Serre uses Ogg's list to see that for $p\ne 2$ the action is irreducible, for otherwise 
there would be a degree $p$ isogeny $3^+\to E'$ over $\bQ$, and $E'$ would also have conductor
$24$. Ogg listed all the curves with conductor 24, and they are all isogenous to $3^+$ by an
isogeny of degree $2^u$, with $u=0,\dots,3$. Therefore $3^+$ would have an isogeny not in
$\bZ$, contrary to nonintegral $j$-invariant. 

\subsubsection{Exceptional covers giving $K_\phi(\text{\ind}) \ne K$} Prop.~\ref{GL2pprop} gives
exceptional covers of $p^2$ degree over any number field from any elliptic curve $E$ without complex multiplication,
excluding a finite set of primes $p$ (dependent on $E$). Still, using Ogg's example shows the best meaning of being
explicit for we may include any prime $p>3$. Here we use $\ell$ for a prime of reduction to get indecomposable
rational functions, and exceptional covers,   
$\mod \ell$ that decompose in $\bar \bF_\ell$.

Consider 
$E=3^+$ as in
\S\ref{OggsEx}. 

\begin{prop} \label{GL2pprop} For  this $E$,  $f_p: \prP^1_x\to \prP^1_y$ ($p> 3$) decomposes into two 
degree $p$ rational functions over some extension $K_p$ of $\bQ$ with group
$\GL_2(\bZ/p)/\{\pm 1\}$. It is, however, indecomposable over $\bQ$. 

Suppose
$\ell\ne 2, 3, p$, and $A_\ell \in \GL_2(\bZ/p)$ represents the conjugacy class of the
Frobenius in $K_p$. Then, reduction of $f_p$ mod $\ell$, gives an exceptional
indecomposable rational function precisely when the  group $\lrang{A_\ell}$ acts
irreducibly on $(\bZ/p)^2=V_p$. This holds for infinitely many primes $\ell$. \end{prop}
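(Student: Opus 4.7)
The plan is to read everything off the explicit realization of $f_p$ as the quotient of the isogeny $[p]\colon E\to E$ (for $E = 3^+$) by the simultaneous action of $\{\pm 1\}$ on source and target. This produces a degree-$p^2$ cover $f_p\colon E/\{\pm 1\}=\prP^1_x\to E/\{\pm 1\}=\prP^1_y$ with Galois closure $E\to\prP^1_y$ and geometric monodromy $G_{f_p}=V_p\xs\{\pm 1\}$, where $V_p=E[p]\cong(\bZ/p)^2$ and $G_{f_p}$ acts on $V_p$ with stabilizer of $0$ equal to $\{\pm 1\}$. The arithmetic monodromy over $\bQ$ is $V_p\xs\rho_{3^+,p}(G_\bQ)$, and Ogg's analysis recounted in \S\ref{OggsEx} forces $\rho_{3^+,p}(G_\bQ)=\GL_2(\bZ/p)$ for every $p>3$. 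Hence $\hat G_{f_p}=V_p\xs\GL_2(\bZ/p)$, the extension-of-constants group is $\GL_2(\bZ/p)/\{\pm 1\}$, and this identifies $K_p$.

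Indecomposability of $f_p$ over $\bQ$ then follows from the affine-group primitivity criterion of \S\ref{gpNot}: $V_p\xs H$ is primitive on $V_p$ if and only if $H$ acts irreducibly on $V_p$, and $\GL_2(\bZ/p)$ is obviously irreducible. Over $K_p$ the arithmetic monodromy collapses to $V_p\xs\{\pm 1\}$, which is imprimitive because $\pm 1$ acts as a scalar and every line in $V_p$ is $\{\pm 1\}$-invariant; each rational cyclic subgroup $C\subset E[p]$ then provides a factorization $E\to E/C\to E/E[p]\cong E$ that descends modulo $\{\pm 1\}$ to a decomposition of $f_p$ into two degree-$p$ rational functions.

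For the reduction step, pick a prime $\ell\notin\{2,3,p\}$; this ensures $E=3^+$ (conductor $24$) has good reduction and $f_p$ remains separable modulo $\ell$. The geometric monodromy of $f_p\bmod\ell$ is still $V_p\xs\{\pm 1\}$, while the arithmetic monodromy is $V_p\xs\lrang{A_\ell,\pm 1}$, so the Frobenius coset appearing in \eqref{galExc} is $V_p\cdot A_\ell\{\pm 1\}$. By the same primitivity criterion, indecomposability over $\bF_\ell$ amounts to $\lrang{A_\ell,\pm 1}$ acting irreducibly on $V_p$; since $\pm 1$ is scalar this is the same as $\lrang{A_\ell}$ being irreducible. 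For exceptionality, Cor.~\ref{geomExcept} asks that each element $(\bv,\varepsilon A_\ell)$ in the Frobenius coset fix at most one $\bw\in V_p$, and the fixed-point set of such an element is either empty or an affine translate of $\ker(\varepsilon A_\ell-I)$, so the requirement is precisely that neither $+1$ nor $-1$ be an eigenvalue of $A_\ell$. Irreducibility of $\lrang{A_\ell}$ says the characteristic polynomial of $A_\ell$ has no $\bF_p$-root at all, hence forces exceptionality automatically; conversely ``exceptional and indecomposable'' immediately yields indecomposable and therefore irreducibility. So the biconditional collapses to: exceptional and indecomposable if and only if $\lrang{A_\ell}$ acts irreducibly on $V_p$.

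For the infinitude clause, the set $S\subset\GL_2(\bZ/p)$ of elements acting irreducibly on $V_p$ is a nonempty conjugation-stable subset (Singer cycles lie in $S$), and its image $\bar S$ in $\GL_2(\bZ/p)/\{\pm 1\}=G(K_p/\bQ)$ is a nonempty union of conjugacy classes. Chebotarev's density theorem applied to $K_p/\bQ$ then supplies a positive density---in particular infinitely many---of primes $\ell$ whose Frobenius conjugacy class falls inside $\bar S$, completing the proof. The main pressure point throughout is step one: leaning on the sharp Ogg/Serre conclusion that $\rho_{3^+,p}$ is surjective for \emph{every} single $p>3$ with no exceptional primes, which is what upgrades the ``almost all $p$'' caveat in Prop.~\ref{SerreRes1} to the clean unconditional statement we need here.
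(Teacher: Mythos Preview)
Your proof is correct and follows essentially the same architecture as the paper's: identify the monodromy pair via Ogg's analysis of $3^+$, apply the affine primitivity criterion of \S\ref{gpNot} for (in)decomposability, reduce modulo $\ell$ with the geometric monodromy preserved, and invoke Chebotarev for the infinitude clause. The one substantive difference is in the step linking irreducibility of $\lrang{A_\ell}$ to exceptionality. The paper argues geometrically: assuming $A_\ell$ irreducible, it tracks how Frobenius permutes the absolutely irreducible components of $X^2_Y\setminus\Delta$ and shows none is $\bF_\ell$-rational. You instead use the fixed-point characterization from \eqref{galExc}: an element $(\bv,\varepsilon A_\ell)$ of the Frobenius coset has exactly one fixed point on $V_p$ iff $I-\varepsilon A_\ell$ is invertible, i.e.\ iff $\varepsilon\in\{\pm 1\}$ is not an eigenvalue of $A_\ell$, which is automatic when $A_\ell$ has irreducible characteristic polynomial. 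Your route is more elementary and makes the logical structure of the biconditional (exceptional \emph{and} indecomposable $\Leftrightarrow$ irreducible) transparent; the paper's component argument is closer to the original definition of exceptionality but slightly obscures that exceptionality alone does not force indecomposability here (an $A_\ell$ with $\bF_p$-eigenvalues avoiding $\pm 1$ would give an exceptional but decomposable cover). One minor point: where you assert the mod-$\ell$ monodromy identification, the paper explicitly invokes the nonregular Chebotarev analog \cite[Cor.~5.11]{FrJ}; you may want to cite this as well.
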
 

\begin{proof} \S\ref{OggsEx} showed for $E=3^+$ the arithmetic (resp.~geometric) monodromy group of the cover $f_p$
is $(\bZ/p)^2\xs \GL_2(\bZ/p)$ (resp.~$(\bZ/p)^2\xs\{\pm 1\}$).  Now apply the nonregular analog of the Chebotarev
density theorem \cite[Cor.~5.11]{FrJ}. Modulo a prime $\ell$ of good reduction, the geometric
monodromy of $f_p \mod
\ell$ doesn't change, and it and some $g=(A_\ell,\bv)\in (\bZ/p)^2\xs \GL_2(\bZ/p)$ (notation of
\S\ref{gpNot}) generate the arithmetic monodromy
$H_p$ where
$A_\ell$ generates a decomposition group for
$\ell$ in the field $K_p/\bQ$. That is, the image of $A_\ell$ in $\GL_2(\bZ/p)/\{\pm 1\}$ is in the conjugacy class
of the Frobenius for the prime $\ell$. Also, $f_p \mod \ell$ is indecomposable if and only if $H_p$ is
primitive. From
\S\ref{gpNot}, this holds if and only if $A_\ell$ acts irreducibly on $(\bZ/p)^2$. 

The same Chebotarev analog also
says any element of $\GL_2/\{\pm 1\}$ is achieved as (the image of) $A_\ell$ for infinitely many $\ell$.
Acting irreducibly is the same as the (degree 2) characteristic polynomial of $A_\ell$ being irreducible over
$\bF_\ell$. The elementary divisor theorem says every irreducible degree 2 polynomial is represented by a matrix
acting irreducibly. From this there are infinitely many $\ell$ with $f_p \mod \ell$  indecomposable
over $\bF_\ell$ but not over its algebraic closure. We have only to relate 
exceptionality and indecomposability $\mod \ell$.

Suppose $A_\ell  \in \GL_2(\bZ/p)$ acts irreducibly. Let $X=\prP^1_x$ and $Y=\prP^1_y$. Then, $f_p \mod \ell$ 
decomposes into two  degree $p$ rational functions over $\bF_{\ell^2}$. Any
component
$U$ of  $X_Y^2\setminus \Delta$ is birational to the algebraic set defined by a relation
between $x_1$ and $x_j$ with $x_1$ and $x_j$ two distinct points of $X$ over a generic
point $y \in Y$. With no loss assume $A_\ell$ fixes $x_1$. So it moves $x_j$ to another
point,  a point different from the conjugate of $x_j$ from applying the
nontrivial element of the geometric monodromy group corresponding to $-1$ (or else $A$
leaves a subspace invariant). Conclude: The Frobenius moves the absolutely irreducible component from the
relation between $x_1$ and $x_j$. So, that component is not defined over $\bF_
\ell$. That means indecomposability is equivalent
to exceptionality. 
\end{proof}  

In Prop.~\ref{GL2pprop}, $K_p$  contains all $p$th roots of 1, but it is far 
from abelian.  So those $\ell$ above, running over all $p$, produce tremendous
numbers of exceptional rational functions. Asking Ques.~\ref{invFunct} on the order of the inverse
of  $\phi_t$ for each is  valid. 

\subsection{Explicit primes of exceptionality} \label{expPrExc} We give a model for \cite{exceptTow} for our best
understanding of how we could explicitly describe the primes $\ell$ that give exceptionality for $f_p\mod \ell$ in
Prop.~\ref{GL2pprop}. Our two primes $p$ and $\ell$ defies classical notation. So, in figuring where
\S\ref{xn-x-1} is going, substitute $(p^2-1)/2$ for $n$  and $\ell$ for $p$.

\subsubsection{A tough question for the easy polynomials $x^n-x-1$} \label{xn-x-1} For an irreducible quadratic
polynomial
$f(x)\in
\bZ[x]$, {\sl quadratic reciprocity\/} allows explicitly writing down the collection of primes for which $f$ has
no zeros as a union of arithmetic progressions (and a finite set of explicit primes). 

\cite{Se-La} considers this
set of polynomials
$\{x^n-x-1\}_{n=1}^\infty$, well-known to be irreducible, with group
$S_n=G(L_n/\bQ)$. The task he sets is to write, for each $n$, an automorphic form (on the upper half plane) whose $q$
expansion is $\sum_{m=0}^\infty a_mq^m$ and from which we can decide the number of
zeros $N_{p,n}$ of $x^n-x-1$ mod $p$ from $a_p$. 

The last case he gives is when $n=4$. He says \cite{crespo}
gives a newform $F(q)$ of weight 1 from which he extracts the formula 
\begin{equation} \label{serreForm} (a_p)^2=\bigl(\frac{p}{283}\bigr)+N_{p,4}-1, \text{ for
$p\not = 283$.}\end{equation} It so happens there is a cover  $\phi: \GL_2(\bF_3) \to S_4$ with kernel $\bZ/2$ 
and a natural embedding $\rho: \GL_2(\bF_3)\to \GL_2(\bC)$. 

A  
theorem of Langlands and Tunnell says, if a Galois extension of $\bQ$ has group $\GL_2(\bF_3)$, then you can 
identify the Mellin transform of the L-series for $\rho$ with a weight 1 automorphic function.  Tate constructed a
Galois extension $\tilde L_4$ of $\bQ$ unramified over $L_4$ realizing $\phi$. Since Serre already had experience
with this L-series from Tate's extension, he knew how to express it using standard automorphic functions. The
character formula $\rho\otimes \rho=\epsilon\oplus (\theta -1)$ is done in standard books on representation theory
to  write all characters of a small general linear group. Here $\theta$ is the degree 4
permutation representation character for $S_4$. So, $\theta(\tilde g)$ is
$N_{p,4}$ if the image of $\tilde g$ is the Frobenius for $p$ in $L_4$, and $G$ is the character from quadratic
reciprocity on the degree 2 extension of $\bQ$ in $L_4$ (sign character of $S_4$). Even with this, however, Serre
has no closed formula for $N_{p,4}$;  in his expression in standard automorphic forms, they appear to powers. 

\subsubsection{Automorphic connections to exceptionality primes}  \label{automorConn}
To me the statement \cite[p.~435]{Se-La} is still cryptic (though I'm aware there are few nonsolvable
extensions of $\bQ$ expressed through the Langlands program by cusp forms):
\lq\lq No explicit connection with modular forms
\dots is known [for $n\ge 5$], although some must exist because of the Langland's program.\rq\rq\  Still, compatible
with another Serre use of automorphic forms in this paper, I accept it as a worthy goal and formulate an analog of
finding such a form related to Ogg's example. Let $K_p/\bQ$ be the constant extension of the Galois closure of the
cover $f_p$. 

\begin{prob} For each prime $p\ge 5$, express the primes $\ell$ where the Frobenius in
$G(K_p/\bQ)=\GL_2(\bZ/p)/\{\pm1\}$ acts transitively on $(\bZ/p)^2\setminus \{\pmb 0\} \mod \pm I$ as a function of
the
$\ell$-th coefficient 
$a_\ell$ of the $q$-expansion of an automorphic function
$F_p(q)=\sum_{n=0}^\infty a_mq^m$. This is equivalent to expressing the primes $\ell$ in
Prop.~\ref{GL2pprop} with
$f_p\mod
\ell$ exceptional.  \end{prob} 

\cite{exceptTow} uses results from the Langlands Program for $\SL_2(\bZ/5)/\{\pm 1\}=A_5$ to look at the case $p=5$.
Of course, one may consider this problem for any elliptic curve over $\bQ$ without complex
multiplication.

Now Ogg's curve has been long known to be modular. So there is an explicit expression for its Hasse-Weil zeta
function as a weight two cusp form. For any elliptic curve $E$ over $\bQ$, consequence of Wiles' proof of the
Shimura-Taniyama-Weil conjecture, the same holds. \cite[Thm.~22]{Se-Cheb} uses that cusp form  to show,  under the
generalized Riemann hypothesis, that if $E$ has no complex multiplication then there is a constant $c$ independent
of $E$ for which the Galois group generated by the $p$-division points on $E$ is isomorphic to $\GL_2(\bZ/p)$ for
all $p> cD_E$ where $D_E$ is an expression just of the product of the primes at which $E$ has bad reduction. 

If $F_E(q)=\sum_{m=0}^\infty b_m q^m$ is this automorphic function, then for the primes of good reduction of $E$,
$b_p=1+p-N_p(E)$ where $N_p(E)$ is the number of $\bF_p$ points on $E\mod p$.  Use similar notation for
another elliptic curve $E'$. Here are results of
\cite{Se-Cheb} that give the result above. 

\begin{edesc} \label{asymp} \item \label{asympa} For any specific integer $h$ There is an asymptotic bound on  the
number of primes
$p< x$ for which 
$b_p=h$. \item \label{asympb} For some $p$ less than a specific constant of the type above,  $a_p\not
=a_p'$. 
\end{edesc} 

It is with \eql{asymp}{asympa} when $h=0$ (supersingular primes for $E$) that we conclude, though it is in the wrong
direction,  for our next question. So, we note 
\cite{LT} conjectures the number of supersingular primes for $E$ without complex multiplication is asymptotic to
$c_Ex^{1/2}/\log(x)$, $c_E>0$. Our final question is on the median value curve topic of \S\ref{nameExc}. 

\begin{prob} \label{superSingExc} Let $E$ be Ogg's elliptic curve $3^+$. Is there a presentation of  $E \mod p$ as an
exceptional cover for all primes $p$ for which $E$ is supersingular. \end{prob}
While we can ask this kind of question for all elliptic curves, this explicit curve and its isogenies to other
elliptic curves have been well-studied.  The result we are after is to give one elliptic curve whose reductions
have presentations as exceptional covers of $\prP^1_y$ for infinitely many $p$. 

\subsection{Wildly ramified subtowers} \label{wildRamExc} 
This subsection is on wildly ramified exceptional covers. We  assume understood  that all (indecomposable )
polynomial exceptional covers
$P:\prP^1_x\to \prP^1_z$ over $\bF_q$ of degree prime to $p$ come from the proof
of Schur's conjecture. This is Prop.~\ref{basicSchur}, slightly augmented by \cite[\S5]{FGS} to handle the
characteristic 2 case, where there is some wild ramification. 

Our comments 
aim at describing the limit group of the subtower $\WP_{\prP^1_y,\bF_q}$ (of 
$\sT_{\prP^1_y,\bF_q}$, $q=p^u$) that indecomposable polynomials, wildly ramified over $\infty$, generate. Call
the subtower generated by those of $p$-power degree  the {\sl pure wildly ramified
subtower}.  Denote it by $\WP^\pu_{\prP^1_y,\bF_q}$. The Main Theorem of \cite{FGS} says this. 
\begin{edesc} \label{wildPoly} \item \label{wildPolya} If $p\not=2$ or 3, then
$\WP^\pu_{\prP^1_y,\bF_q}=\WP_{\prP^1_y,\bF_q}$, and generating polynomials have affine geometric monodromy 
$(\bF_p)^t\xs H$ with $H\le
\GL_t(\bZ/p)$ (\S\ref{notation}). 
\item \label{wildPolyb} If $p=2$ and 3, add to $\WP^\pu_{\prP^1_y,\bF_q}$ polynomial generators with
almost simple monodromy of core $\PSL_2((\bZ/p)^a)$ ($a\ge 3$ odd) to get $\WP^\pu_{\prP^1_y,\bF_q}$. 
\end{edesc} 

\subsubsection{What can replace Riemann's Existence Theorem} \label{repRET} A general use of RET related ideas 
appears in  \cite{FrGCov} and \cite{Gur-Stev} under the following rubric.  Given a pair of groups $(G,\hat
G)$ that could possibly be the geometric-arithmetic monodromy group pair for an exceptional cover, each shows that
covers do occur with that pair. \cite[\S3.2.2]{prelude} explains the different territories covered by these results.
We briefly remind of these. The former gives tame covers of $\prP^1_y$ over $\bF_q$ where $p$ is sufficiently
(though computably) large. The latter gives wildly covers of curves of unknown genus 
over
$\bF_q$ with
$p$ fixed, but $q$ unknowably large. What \cite{exceptTow} continues is the use \cite{FrMez} to get a result like
\cite{Gur-Stev}, but with the virtues of  \cite{FrGCov}. That means, effective, even for covers of $\prP^1_y$ over
$\bF_q$ with $p$ fixed, and $q$ bounded usefully.   
 
The Guralnick-Stevens paper uses \cite[Main Theorem]{Kz88}. We comment on that and  
a stronger result from
\cite[p.~231--234]{FrSchurII}, which was used almost exactly for their purpose. (There are more details and
embellishments in
\cite{FrMez}.) \cite[Main Theorem]{Kz88} says separable extensions of
$\bar
\bF_p((\frac 1 z))$ correspond one-one with geometric Galois covers
$\phi: X\to \prP^1_z$ with these properties.

\begin{itemize} \item They totally ramify over $\infty$ with group $P\xs
H$.\item The group $H$ is cyclic and $p'$, and $P$ is a $p$-group. 
\item  $\phi$ tamely ramifies over 0 and does not ramify outside 
$\{0,\infty\}$.\end{itemize} 

RET works by considering the deformation of the branch points of a tame cover of a curve $C$. In the explicit case
when $C=\prP^1_y$, RET gives great command of how these covers vary as you deform their ($r$) branch points keeping
them distinct. That control comes from representations of the Hurwitz monodromy group (as in \eqref{HurMon}),
identified with the fundamental group of the space $U_r$ of $r$ unordered branch points. 

The space $U_r$ is a target for any family of $r$ branch
point covers. By recognizing the hidden assumptions in this \wsp under the label {\sl configuration space} \wsp 
\cite{FrMez} forms a configuration space that replaces $r$ by a collection of data called {\sl ramification data}.
Note that exceptional covers are {\sl far\/} from Galois. 

This ramification data, and the Newton Polygon attached to
it, are invariants defined for any cover, not necessarily Galois. The significance of this Galois closure
observation is serious when considering wildly ramified covers. That is because the Galois closure process used
for families of covers in
\cite{FrVMS}, by which we compare arithmetic and geometric monodromy, is much subtler for wildly ramified covers
\cite[\S6.6]{FrMez}. The use of Harbater patching in \cite{Gur-Stev} sets them up for dealing with, one wildly
ramified branch point, with the rest tamely ramified.   It allows nice comparison with general
use of  \cite{FrMez} applied to exceptional polynomial covers, with the only case left, where they have affine
monodromy groups (see below). 

\subsubsection{A surprising source of dissension!} If you were a co-author of a book, you likely would expect
your co-author to ask your opinion on matters in which you are demonstrably expert. You wouldn't expect him to
publish,  in a new edition, versions of {\sl your\/} results as if they belonged to others,  versions many years
later than yours.  You wouldn't expect to have no say about all this, would you? 

Related to the topics of this paper, \cite[Lem.~21.8.11]{FrJ4} quotes
\cite[Prop.~2.2]{turnwald} for the proof of the statement Lem.~\ref{decompKtoKbar},  quoted from two of my
first four papers, essentially from the same time as \cite{FrSchur}. The proof of 
\cite[Prop.~2.2]{turnwald} is identical to mine in \cite[Prop.~3, p. 101]{FrPap0}. The whole context of using the
lemma for primitivity is mine, used whenever related topics come up. Further, my proof of Schur's conjecture was
in about four pages. 

If contention caused this, then its
bone is RET. Having developed 
tools enhancing RET that work in generality, I went home one night  as a recent PhD (at the Institute
for Advanced Study in 1968) and thought I would apply it to a list of problems that included Davenport's. First,
however, there was Schur's Conjecture. I saw the tools were in place so it all came down to group theoretic
statements. I found in the library Burnside's and Schur's group theorems soon after.  With
Schur's conjecture out of the way, it was possible to attack the serious business in Davenport's problem,
and the study of the exceptional examples there. 

Twenty-five years later there is in print another proof of Schur's conjecture,
differing at one point. From Riemann-Hurwitz alone, exactly as done in all these sources, you get down to wanting to
know this. Is a genus 0 dihedral cover totally ramified over $\infty$, and ramified over two finite branch points, 
represented up to linear equivalence by a Chebychev polynomial? (As comments on Prop.~\ref{ordSchurT} explains,
sensitivity to Dickson polynomials is illusory generality.) 

The uniqueness up to affine equivalence of a polynomial cover with
$D_p$ as monodromy group comes immediately from RET and the uniqueness of the branch
cycle description. Instead of that, \cite{turnwald} give a \lq\lq direct proof.\rq\rq\ Of course that is easy!
The Galois closure of the cover is a sequence of two genus 0 cyclic covers. RET in that
case follows from using the first semester of graduate complex variables {\sl branch of log\/} \cite[Chap.~1]{FB}.
Still, essentially my first paper proved a (then) 50 year old unsolved problem overnight because I
powerfully used RET to turn the whole thing into combinatorics and deft use of Lem.~\ref{decompKtoKbar}. Then, I
went on to Davenport's much  tougher problem \cite{FrRedPol}.  

Here is
\cite[p.~493]{FrJ4} dismissing RET: \lq\lq \cite{FrSchur} uses the
theory of Riemann surfaces to prove Schur's conjecture.\rq\rq\  Consider this in the light of what happens with 
nonsolvable monodromy groups: The only real tool is insights from RET. 

\begin{prob} Explain why a co-author who often asks for your mathematical help would do this. Then, try, why he would
want to dismiss one of the greatest geniuses of mathematical history (Riemann)? Then, for fun, take up my challenge
in \S\ref{nonSporadics} of doing Davenport's problem as in
\S\ref{MullerDPs} without RET.  
\end{prob}

Yet, there is more. \cite{FGS} take on wildly ramified exceptional covers, the first to do so coherently. Step back!
If exceptional covers have any significance, then you want their nature. That means their arithmetic
monodromy groups, period! 

Again primitivity is the key,  so you need only look at the primitive groups. The
result is this. \cite{FGS} listed all arithmetic monodromy groups of primitive polynomials over a finite field
with one caveat. A mystery was this affine monodromy possibility. There might be
unknown exceptional polynomials over
$\bF_q$ ($q=p^u$) with geometric monodromy group $(\bZ/p)^n\xs H$, $H$ acting irreducibly on $(\bZ/p)^n$
(as in \eqref{wildPoly}). The polynomial would then have degree $p^n$. There are so many primitive affine groups,
so that is what we considered the major unsolved remainder about exceptional polynomials. Yet,
\cite[Thm.~8.1]{FGS} almost trivialized the nearly 100 year old Dickson conjecture (\eql{trueExc}{trueExcc}; no
serious group theory needed), including it in the precise description  of the rank $n=1$ case of 
exceptional polynomials.   

Jarden sent our paper \wsp as an editor of
the Israel Journal \wsp to D.~Wan who, apparently in this refereeing period, formulated the {\sl Carlitz-Wan
conjecture\/}. That says the exceptional polynomial degrees are prime to $q-1$.  So, the affine case already passes
this conjecture. Instead of the above, \cite[p.~487]{FrJ4} says only that a proof is contained in
\cite{FGS}. It says nothing of what \cite{FGS} proves, as given in the previous two sentences. I quote: 
\begin{quote} A proof of the Carlitz-Wan Conjecture for $p>3$ that uses the classification of finite simple groups
appears in \cite{FGS}. It gives information about the possible decomposition factors [of the monodromy
groups].\end{quote}

Both the $p>3$ and the lazy reference to decomposition groups is ridiculous. We knew exactly what the
monodromy groups (of the non-$p$-power degrees) were for
$p=2$ and 3, and for all others they were affine groups as listed above. More so, 
\cite{FGS} has nothing to say on the Carlitz-Wan conjecture because the paper was already in print before we heard
of it. 

Most importantly,
\cite{FrJ4} takes three pages on the Carlitz-Wan conjecture proof 
\wsp exposition from \cite{Cohen-Fried} 
\wsp and what does that give?  That conjecture is on the nature of tamely ramified extensions over the completion at
infinity. The Carlitz-Wan conjecture is a contrivance  to steal attention from a real theorem. That contrivance
worked and is supported by
\cite{FrJ4}. Compare it with  \cite{FGS} about the topic of interest, exceptional
polynomials as explained in
\S\ref{wildRamExc}. 

\begin{rem} I never saw a copy of \cite{FrJ4} until it was in print. While there seem to be laws 
preventing that, you have go to court: international in this case! \end{rem}

\begin{rem}[Producing the monodromy groups] \label{toLenstra} Note how careful attention to monodromy groups led
others to projects (listed in \eql{trueExc}{trueExcb} and \eql{trueExc}{trueExcc}) investigating 
actual exceptional polynomials. This exemplifies  being able to {\sl grab a group\/}: having a 
workable  use of the classification (as in 
\S\ref{primGrab}). Yet, Lenstra never once mentioned \cite{FGS} in his talk at MSRI in Fall of 1999 (see p.~1
acknowledgements). 

Using \cite{FGS}, the papers  \cite{GZ} and \cite{GRZ} classify all indecomposable exceptional
polynomials with $\PSL_2$ monodromy (as in \eql{wildPoly}{wildPolyb} and \eql{trueExc}{trueExcc}).   Also, 
\cite{GZ} has all the indecomposable polynomials, excluding those in \eqref{wildPoly} with affine monodromy group of 
prime-power degree,  that become decomposable over some extension.  These are the only examples: In
characteristic 7, that of M\"uller in
Rem.~\ref{firstDecompStatement}  of degree 21; and in characteristic 11, of degree 55. 
\end{rem}

\subsubsection{Problems on periods of exceptional correspondences} 
Suppose we have an
exceptional correspondence between copies of $\prP^1_z$ (\S\ref{excCor}). Is there some 
structure on the permutations these produce on
$\prP^1_z (\bF_{q^t})$ running over $t$ in the exceptional set? Example: If  
$(n,q^t-1)=1$, then Euler's Theorem ($\bF_{q^t}^*$ is cyclic) gives the inverting map for
$z\mapsto z^n $ on
$\prP^1_z(\bF_{q^t})$.
We pose finding analogs for more general exceptional covers  
such as those in  these exceptional
towers.  
\begin{edesc} \label{trueExc} \item \label{trueExca}  The
$\GL_2$ exceptional tower (\S\ref{restGen0tame}); or \item  \label{trueExcb} 
1-point and 2-point wildly ramified exceptional towers which will contain all subtowers generated by exceptional
polynomials  
\item \label{trueExcc}  \label{trueExcc} Especially from the Dickson conjecture proof  \cite[Thm.~8.1]{FGS} of 1896 
and the Cohen-Lenstra-Matthews-M\"uller-Zieve $\PSL_2$ monodromy 
examples (as in \eql{wildPoly}{wildPolyb}; \cite{CM}, \cite{LeZ96}, \cite{Mu}).\end{edesc} 

Suppose $\phi: \prP^1_x\to \prP^1_y$ is one of the exceptional genus 0 covers listed in
\eqref{trueExc}. Use the notation of Ques.~\ref{invFunct} for the period $m_{\phi,t}$ of $\phi$ over
$\bF_{q^t}$ after identifying $\prP^1_x$ and $\prP^1_y$. Consider the Poincar\'e series
$P_\phi=\sum_{t\in E_\phi(\bF_q)} m_{\phi,t}w^t$. 

\begin{quest} \label{invFunct2} Is $P_\phi$ a rational function? 
\end{quest}

Suppose
$\phi_i: X_i\to Y$,
$i=1,2$,  is any pair of $\bF_q$ covers (of absolutely irreducible curves). From
\eqref{davProp}, these are a DP if and only if $X_1\times_{Y} X_2$ is a pr-exceptional
correspondence between $X_1$ and $X_2$ with $E_{\pr_1}\cap E_{pr_2}$
infinite. Then, it is automatic from the Galois characterization of DPs (in 
\eqref{davProp}) that this intersection is a union of full Frobenius progressions. 

Suppose $W$ is a pr-exceptional correspondence between any two varieties $X_i$,
$i=1,2$. Then, the exceptional sets for $\pr_i: W\to X_i$, $i=1,2$ are also unions of full
Frobenius progressions. 

\begin{quest} Could it happen that $E_{\pr_1}\cap E_{\pr_2}$ is empty (even if these varieties come with 
covers $\phi_i: X_i\to Y$, $i=1,2$, and $W=X_1\times_Y X_2$)?  \end{quest} 

\section{Monodromy connection to exceptional covers} \label{monConn}

This section extends the historical discussion from \S\ref{prehist}. The name
exceptional arose from Weil's Theorem on Frobenius eigenvalues applied to a
family of curves. Davenport and Lewis considered special  situations for the following
question. Suppose 
$P_{f,g}=\{f(x,y)+\lambda g(x,y)\}$ is the pencil over $\bF_p$, and $p+E_\lambda$ is the
number of solutions in $(x,y)\in \bF_p\times \bF_p$ of the equation given by the parameter
$\lambda$. 

\begin{quest} Can you give a lower bound on an accumulated estimate for the error term
from  Weil's result running over rational values of $\lambda$? \end{quest}  Their aim was
find out for which $(f,g)$  a nonzero constant times 
$p^2$  would be a lower bound for $\sum_{\lambda} E_\lambda^2\eqdef W_{f,g}$. That is, when
would the Weil error of
$c_\lambda \sqrt{p}$ accumulate significantly in the pencil? 
  
\subsection{The name exceptional appears in \cite{dlcong1}} \label{monConna}
\cite{dlcong1} considered this hyperelliptic pencil: $y^2-f(x) +\lambda$, $f\in
\bF_p[x]$. They concluded
$W_{y^2-f(x),1}
\ge c_fp^2$, with $c_f>0$, if $f:X=\prP^1_x\to \prP^1_z=Y$ is not {\sl exceptional\/}. 

Use notation from
\S\ref{fibprod}. Soon after publication of \cite{dlcong1}, being exceptional meant  
\eql{rh}{rha} in Prop.~\ref{RHLem}: $X_Y^2\setminus \Delta$ has no absolutely
irreducible $\bF_p$ components. For their case, let 
$k_f$ be the number of its absolutely
irreducible $\bF_p$ components. Though confident of expressing
$c_f$ in the degree of
$f$, they are not precise about it.

Denote the Jacobi symbol of $u \mod p$ by $(\frac u p)$.  
Notice:  $|\{(x,y)\mid y^2=f(x)\}|$ is $\sum_{x\in \bF_p} 1+(\frac{f(x)+\lambda}
p)=p+E_\lambda$. Therefore: 
$$(E_\lambda)^2 =
\sum_{x,y}\Bigl(\frac{f(x)+\lambda} p\Bigr) \Bigl(\frac{f(y)+\lambda} p\Bigr)=
\sum_{x,y}\Bigl(\frac{(f(x)+\lambda)(f(y)+\lambda)} p\Bigr). $$ Now sum a
particular summand in $(x,y)$ over $\lambda$. If $f(x)\equiv f(y) \mod p$, then all
arguments are squares, adding up to $p-1$ for the nonzero arguments. Otherwise, 
complete the square in  $\lambda$. The sum becomes $U_d\eqdef
\sum_{u}\bigl(\frac{u^2+d} p\bigr)$ for some nonzero $d\mod p$. Note:  $U_d$ depends
only on whether $d$ is square $\mod p$. From that,  summing $U_d$
over $d$ shows $U_d$ is independent of $d$: it is  $-1$. 

Let $V=\prP^1_x\setminus\{\infty\}$, $U=\prP^1_z\setminus \{\infty\}$.  
We conclude:
$W_{y^2-f,1}=pN_f$ with $N_f=|(V^2_U\setminus \Delta)(\bF_p)|$. Weil's
estimate shows 
$N_f=k_fp+O(p^{1/2})$. So, $k_f$ is the main determiner of the constant in the
Davenport-Lewis result.  This is the source of the name {\sl exceptional\/} for polynomials
$f$. 

\cite[p.~59]{dlcong1} notes cyclic and Chebychev polynomials are exceptional
for those primes $p$ where they are permutation. 
\begin{quote} Both substitution polynomials and exceptional polynomials admit functional
composition:  If $f$ and $g$ belong to these classes, then so does $f(g(x))$. This is
obvious in the case of substitution polynomials and \dots \end{quote} They partially  
factor $f(g(x))$ over $\bF_p$ to see it is exceptional if $f$ and $g$ are. They weren't
sure  their meaning of exceptional also meant \eql{rh}{rhb} in Prop.~\ref{RHLem}. 
Was $f$ automatically substitution? 
C.~MacCluer's 1966 thesis
\cite{MacCluer} took on that question, answering it affirmatively for tame polynomials
satisfying \eql{rh}{rha}. The proof of Princ.~\ref{liftPrinc} seems easy now, applying
generally to pr-exceptional. Yet, the literature shows that belies much 
mathematical drama.

\subsection{The monodromy problem of \cite{Kz81}} \label{monConnb} 
Let $\Phi: \sX\to S$ be a smooth family of (projective) curves over a dimension $N$ base
$S$. Assume the  family has definition field $K$, which we take to be a number field.
This setup has an action of the fundamental group $\pi_1(S,s_0)=G$ on the 1st
cohomology $V=H^1(\sX_{s_0},\bC)$ of the fiber of
$\Phi$ over $s_0\in S$. Let $V_s=H^1(\sX_s,\bC)$ for $s\in
S$. 
\begin{triv} \label{locConst} Equivalently,  $\dot{\cup}_{s\in
S}V_s$ is a
locally constant bundle over $S$. \end{triv}

\subsubsection{Using complete reducibility} A theorem of Deligne says 
$G$ has {\sl completely reducible\/} action \cite[Thm.~3.3]{griffith}. So, 
$V$ breaks into a direct sum $\oplus_{i=1}^m V_i$ with $G$ acting on each $V_i$
irreducibly (with no proper invariant subspace). Two irreducible representations $\rho':
G\to GL(V')$ and
$\rho'': G\to GL(V'')$ of
$G$ are equivalent if $\dim_{\bC}(V')=\dim_{\bC}(V'')=n$, and for some
identification of these with $\bC^n$, there is an element $M\in \GL_n(\bC)$ with 
$\rho'=M\circ\rho''\circ M^{-1}$. Rewrite the sum $\oplus_{i=1}^m V_i$ as $\oplus_{i=1}^{m'}
m_iV_i'$ with the $V_i'\,$s pairwise inequivalent.    Denote $\sum_{i=1}^{m'}m_i^2$ by
$W_\Phi$.  Then, with $V^*$ the complex dual of $V$ (with $G$ action): 
\begin{equation} W_\Phi=\sum_{i=1}^{m'}m_i^2=\dim_\bC
\text{End}_G(V,V)=\dim_\bC (V\ot V^*)^G.\end{equation}  

\subsubsection{The strategy for going to a finite field}  The $\ell$-adic analog of
\eqref{locConst} gives  varying $\ell$-adic 1st cohomology groups over the base $S$. These  
form a locally constant sheaf $\sT=\sT_\ell$ with
$G$ action. Elements of the absolute Galois group $G_K$ also act on this.  There is a comparison theorem in $\bQ_\ell$
developed by Artin, Deligne, Grothendieck and  Verdier that Deligne used extensively
\cite{deligne}.  

The idea from here is to regard $S$ as an algebra over some ring of
integers $R$ of $K$ and to use primes $\bp$ of $R$ for 
reducing the whole family. 
Suppose the residue 
class field $R/\bp$ has order $q$. We would then have
a sheaf on which the Frobenius $\Fr_{q}$ ($q$-power map) acts. To relate this to a 
Davenport-Lewis type sum for the accumulated Weil error, we need a two-chain comparison.

\begin{edesc} \label{comp} \item  \label{compa} Extract the Davenport-Lewis estimate
for the family over
$R/\bp$ from $\Fr_{q^t}$ action (some $t$) on the cohomology of the $\ell$-adic
sheaf
$\sT\ot\sT$. 
\item  \label{compb} Compare $\Fr_{q^t}$ on the cohomology with the quantity
$W_\Phi$. 
\end{edesc} 

The comparison \eql{comp}{compa} is crucial. The rational prime $p$ that appeared in the
Davenport-Lewis estimate is long gone. So, we will be considering the analog of their
computation with $\bF_{q^t} (\supset R/\bp\eqdef \bF_\bp)$ for $t$ large replacing $\bF_p$, and
subject \wsp as we will see \wsp to another constraint.  The convention for writing the
Davenport-Lewis estimate for the family over $\bF_{q^t}$ is in the following
notation: 
\begin{equation} \label{lefschetz1} \sum_{s\in S\ot_R\bF_{\bp}(\bF_{q^t})}
E_{\bp,t,s}^2=\sum_{s\in S\ot_R\bF_{\bp}}\tr(\Fr_{q^t}|\sT_s\ot\sT_s).\end{equation}
The Lefschetz fixed point formula computes the right side as \begin{equation}
\label{lefschetz2} \sum_{i=0}^N (-1)^i\tr(\Fr_{q^t}| H^i_\ell(S\ot\bar
\bF_{\bp},\sT\ot
\sT)).\end{equation} 

\subsubsection{Using the full Weil Conjectures} Deligne's version of the Riemann
hypothesis isolates one term ($i=N$) from this. With that we conclude by
fulfilling
\eql{comp}{compb}. To do so requires assuring the trace term on $H_\ell^{2N}$ has
a bound away from zero in the limsup over $t$: So $\Fr_{q^t}$ eigenvalues on it don't nearly cancel for all $t$.

Then, that term will have absolute value  roughly 
$q^{(N+1)t}$ times $\dim_{\bQ_\ell}(H_\ell^{2N})$. (Don't forget to add the affect of
$\Fr_{q^t}$ on the stalk $\sT_s\ot
\sT_s$ in which the cohomology elements take values.) This will dominate all other terms in
\eqref{lefschetz2}. Still, to isolate out that term, we must choose $t$ large, and yet 
mysteriously. Reason? We don't actually know what are the eigenvalues of the
Frobenius on
$H^{2N}(S\ot\bar
\bF_{\bp},\sT\ot
\sT)$, though we  soon interpret how many there are.  

To fix notation, suppose $\row
\gamma u$ are the eigenvalues of the Frobenius for $\bF_{\bp}$ on $H^{2N}(S\ot\bar
\bF_{\bp},\sT\ot
\sT)$, with $\bF_{\bp}=\bF_{q^{t_0}}$. Consider the
corresponding eigenvalues of the Frobenius for $\bF_{q^{t}}$ with $t_0$ dividing $t$, which
is the $t/t_0=v$ power of the first Frobenius. So its eigenvalues are the
$v$th powers of $\row \gamma u$. These all have absolute
value $q^{v(N+1)}$. A simple diophantine argument shows there is a subsequence $L$ of such
$t$ so the absolute value of $(\sum_{i=1}^u \gamma_i^{u})/q^{v(N+1)}$ has limit $u$. 
This is the limsup of the right side of \eqref{lefschetz2} divided by $q^{t(N+1)}$ as a
function of
$t$ (divisible by $t_0$). Therefore $u$ is Davenport-Lewis limit of the left side of
\eqref{lefschetz1} divided by $q^{t(N+1)}$. For the hyperelliptic family,
this was the number of absolutely irreducible factors of $X_Y^2\setminus \Delta$ over the
fields $\bF_{q^t}$, $t\in L$. 

The number $W_\phi$ is the same as $\dim(H^0_\ell(S\ot\bar
\bF_{\bp},\sT\ot
\sT))$. By Poincar\'e duality, this is the same as $\dim(H^{2N}_\ell(S\ot\bar
\bF_{\bp},\sT\ot
\sT))=u$. It is the left side of \eqref{lefschetz1} divided by $q^{t(N+1)}$.  
So, the
Davenport-Lewis
 estimate only works on the quantity Katz is after if we run over the $\limsup_t$ 
$\ell$-adic cohomology estimate.  

Generalizing this
situation has straightforward aspects. We comment on that, then conclude in 
\S\ref{excZeta} with a different tack on the Davenport-Lewis setup. This motivates how
\cite{exceptTow} uses zeta relations to detect the effects of exceptionality. 

Since the fibers are curves, you can easily 
adjust to consider collections of  affine curves with points deleted from the
fibers. This doesn't affect the final computation: Using error estimates
from the affine (instead of from the projective) fibers gives the same result.
\cite[\S IV]{Kz81} writes this in detail. Also, in estimating counting errors
in rational points, it may be  useful to have $S$ an open set in $\afA^N$ over $R$, with 
the family the restriction of $\sW\to \afA^N$ (still with 1-dimensional fibers).
If we use the latter family to make the count, likely some fibers will be singular, even 
geometrically reducible. What happens if we include them in the computation for our
estimate for the calculation over $S$? Answer: This makes the error for 
a family over
$\afA^N$ an upper bound to counting the sums of squares of the irreducible components
for the monodromy action
\cite[\S V]{Kz81}. 

Katz uses the {\sl wrong\/} direction from \cite{dlcong1}; as an upper, 
rather than lower, bound. It is a shame to lose the precision. So, observe when
$\dim(S)=1$, the correct estimate for $W_\Phi$ is the $\limsup$ of the
Davenport-Lewis error  estimate divided by
$q^{2t}$. That is the expected $k_f$
(computed over the algebraic closure of $K$). 

\subsubsection{Detecting exceptionality through zeta properties} \label{excZeta}

Now we list lessons from the combination of Davenport-Lewis and Katz. 
Consider the projective curve $U_{\lambda}$ defined by  
$y^2 +\lambda u^2-xu=0$ in projective 2-space $\prP^2$ with variables
$(x,y,u)$, for a fixed value of a parameter $\lambda$. Denote the space in
$\prP^2\times \afA^1_\lambda$ defined by same equation as $U^*$. There is a well-defined 
map
$\phi:(x,y,u,\lambda)\in U^*\mapsto x/u=z\in
\prP^1_z$.  

View any (nonconstant) $f(w)\in \bF_q(w)$, $f: \prP^1_{w} \to \prP^1_z$, as a
substitution. \cite{dlcong1} asked how substituting $f(w)$ for $z$ affects  the sum
over $\lambda\in \afA^1(\bF_{q^t})$  of the squared difference between  
$|U_\lambda(\bF_{q^t})|$ and 
$q^t+1$.   
This 
Weil error  vanishes over $\bF_{q^t}$ where $f$ is
exceptional, and we know exactly when that happens (as in Lem.~\ref{basicSchur} and \S\ref{restGen0tame} ). Excluding such $f$ and a possible finite set of $t$ values, it is far from
vanishing. The investigation starting from MacCluer's thesis \cite{MacCluer} found this precise vanishing
for infinitely many $t$ to characterize exceptionality. Note:  In this  formulation, you can
replace
$w\mapsto  f(w)$ by any cover $\psi: X\to \prP^1_z$.  

Katz interpreted this error variation as a zeta function
statement.  Specific conclusions related
to $\pi_1(S,s_0)$ action  involved an
$f$ exceptional over a number field (as in
\S\ref{1-K}). This is just one phenomenon. 
Relations between general zeta functions  
defined by exceptional covers and iDPs (\S\ref{nameExc}) generalize the Davenport-Lewis
situation around exceptional polynomials.

\section{The effect of pr-exceptionality on group theory and zeta functions}  
\label{prodExcCov} The Davenport-Lewis collaboration
\cite{dlcong1}  motivated MacCluer's Theorem \cite{MacCluer}. This first 
connecting of two meanings of exceptionality (\S\ref{monConna}) applied 
just to tame polynomials. Our final form as in Princ.~\ref{liftPrinc}:  pr-exceptionality translates to a pure 
monodromy statement, a (now) transparent proof. This section lists examples of how  
pr-exceptionality  relates to many other topics. 

\S\ref{gpsExcept} enhances the {\sl crossword\/} analogy of
\S\ref{primGrab} for an historical explanation of how exceptionality and
Davenport's problem affected {\sl group theory}.  The examples of \S\ref{cryptexc}  show these special arithmetic
covers raise tough questions on the nature of zeta functions and how much they capture of cover arithmetic.
Finally, we discuss the history of Davenport pairs.  These topic introductions continue in \cite{exceptTow}.

\subsection{Group theory versus exceptionality} \label{gpsExcept} Many supposed by 1969 that we knew
everything about rational functions in one variable that one could possibly care about. \S\ref{ratFunct} and 
\S\ref{expexcTowers} (with technical fill from the appendices) take us through the mathematical history that exposed
that supposition as premature. 

\subsubsection{Rational functions set the scene} \label{ratFunct} Consider a rational function $f$, indecomposable
over  $\bar \bF_q$, that might have appeared in 
\S\ref{excZeta}. When $f$ is a polynomial and
has degree prime to $p$, we know either that $f$ is Dickson or cyclic, or $k_f$ is exactly
1. With any
$f\in \bF_q(w)$, the  limsup of the Davenport-Lewis variation divided by $q^{2t}$ is 
still   $k_f$ computed over $\bar K$. Even,
however, under our extra hypotheses, we don't expect this to be 1. For example, having just
one absolutely irreducible component translates as doubly transitive geometric monodromy.

Our indecomposability criterion is that the geometric monodromy is primitive. The geometric monodromy group of a
rational function is called a {\sl genus 0\/} group. I suspect even those who knew what primitive 
meant in 1969 would have thought the geometric monodromy group of an indecomposable rational function could be any
primitive group whatsoever. That is what the genus 0 problem tackled. The serious unsolved aspects in 1987 
translated to considering genus 0 covers whose geometric monodromy is  primitive, but not doubly transitive. The
main tool, besides group theory, was Riemann's existence theorem (existence of branch cycles as in
\S\ref{introBC}).

\subsubsection{Guralnick's optimistic conjecture} \label{nonSporadics}  I've used the same title for this section
as does \cite[\S7.3]{thompson}. For the convenience of the reader I repeat a bit of that to express what 
is expected (and has been partly proved) on the geometric monodromy of genus 0 covers. (For genus $g=1$ and $g>1$,
there is a similar conjecture about $g$-sporadic groups.)  The easiest result from the elementary part of Riemann's
existence theorem (RET) \wsp use of branch cycles in \S\ref{introBC} 
\wsp is that every finite group is the geometric monodromy group of a cover of $\prP^1_z$. If the following were
truths for {\sl you}, then you might not suspect the need for RET. 
\begin{itemize} \item  It is easy to construct genus 0 covers of $\prP^1_z$ with desired properties. 
\item All groups appear as monodromy groups of genus 0 covers of $\prP^1_z$. 
\end{itemize} Both, however, are false, whatever you mean by {\sl easy\/}, even if you restrict to genus 0 covers
with a totally ramified place (represented by polynomials; see \S\ref{MullerDPs}). 

The original
Guralnick-Thompson conjecture was that for each $g$, excluding finitely many simple groups, the only composition
factors of monodromy groups of
$\prP^1_z$ covers are alternating groups and cyclic groups. Still, composition factors are one thing, actual  genus
0 primitive monodromy groups another. Also, the attached permutation representations do matter. What arose in
the middle 1800s from elementary production of covers were cyclic, dihedral, alternating and symmetric groups
using genus zero covers. Such examples appear in 1st year graduate algebra books. The list of
\eqref{0-sporadicb} shows these and a small set of
tricky  alternatives to these. 

\begin{defn} We say
$T:G\to S_n$, a faithful permutation representation,  with properties \eqref{0-sporadica} and
\eqref{0-sporadicb} is {\sl
$0$-sporadic\/}. \end{defn} 
Denote  $S_n$ acting  on 
unordered $k$ sets of $\{1,\dots,n\}$ by $T_{n,k}:S_n\to S_{\scriptscriptstyle{{n\choose k}}}$:  
standard action is $T_{n,1}$. Alluding to $S_n$ (or $A_n$) with $T_{n,k}$ nearby
refers to this presentation. In \eqref{0-sporadicb}, 
$V_a=(\bZ/p)^a$ ($p$ a prime). Use \S\ref{nonempNielsen2} for semidirect product in the 
$T_{V_a}$ case on points of $V_a$; $C$ can be $S_3$. In the 2nd $(A_n,T_{n,1})$ case, $T:G\to
S_{n^2}$. 
\begin{triv} \label{0-sporadica} $(G,T)$ is the monodromy group of a primitive (\S\ref{nc1})
compact Riemann surface cover
$\phi:X\to\prP^1_z$ with
$X$ of genus $0$.  \end{triv}  
\begin{triv} \label{0-sporadicb} $(G,T)$ is not in this list of group-permutation types. \end{triv}
\!\begin{itemize} \item  $(A_n,T_{n,1})$: $A_n\le G\le S_n,\text{ or } A_n\times A_n \xs \bZ/2 \le G
\le S_n\times S_n\xs \bZ/2$. 
\item $(A_n,T_{n,2})$: $A_n\le G\le S_n$. 
\item $T_{V_a}$: $G= V\xs C$,\  $a\in \{1,2\}$, $|C|=d\in \{1,2,3,4,6\}$ and $a = 2$
only if
$d$ does not divide $p-1$. 
\end{itemize}
 
Indecomposable rational functions $f\in \bC(x)$ represent 0-sporadic groups by 
$f:\prP^1_x\to \prP^1_z$ if their monodromy is not in the list of \eqref{0-sporadicb}. We say 
$(G,T)$ is {\sl polynomial 0-sporadic}, if some $f\in\bC[x]$ has monodromy outside this list. We know of covers
satisfying
\eqref{0-sporadica} and falling in the series of groups in the list of \eqref{0-sporadicb}.  There are, however,
other 0-sporadics with an $A_n$ component \cite{GSh04}. For example, if there were a genus 0 cover with
monodromy
$A_6$ acting on unordered triples from $\{1,2,3,4,5,6\}$, we would call it 0-sporadic. The point, however, of 
0-sporadics is that you only have a small list of $n\,$s for which the geometric monodromy of the genus 0 cover will
be $A_n$ acting on unordered triples. 
 
Emphasis: Don't toss the 0-sporadics away, because it is they that give a clue for quite different set of
primitive genus 0 covers in positive characteristic. The finite set of (genus 0)-sporadic groups (over
$\bC$; App.~\ref{MullerDPs}) adumbrates a bigger set of genus 0 groups  over finite fields. While we don't have so
precise a Riemann's existence theorem in characteristic $p$, there are tools.  By focusing on the group requirements
for exceptional covers and Davenport pairs we have  applied characteristic 0 thinking
to characteristic $p$ problems. 
An understanding why this works starts from  
\cite{FrSchurII}, and a preliminary version of \cite{FrMez} in 1972. More solid
applications in print encourage extending \cite{FrGCov} and
\cite{Gur-Stev}. The precise structure of exceptional towers 
makes describing their limit groups an apt sub-problem from the unknowns left 
by  Harbater-Raynaud (\cite{Ha}, \cite{Ra}) in their solution of Abhyankar's problem.   

Davenport asked me 
several times to explain why transitivity of a permutation representation (from a
polynomial cover $p:\prP^1_x\to \prP^1_z$) is equivalent to irreducibility of $p(x)-z$
over the field $K(z)$. He didn't like Galois theory, and his reaction to  group theory was
still stronger. It wasn't only Davenport. Genus 0 exceptional
covers force an intellectual problem faced by the whole community. 

\begin{edesc} \label{theParadigm} \item \label{theParadigma} 
RET  guides us to how to find exceptional
covers. 
\item \label{theParadigmb} Using exceptional covers demands an explicit presentation
of equations that \eql{theParadigm}{theParadigma} can't give directly. 
\end{edesc} 

\subsubsection{From Davenport pairs to the genus 0 problem} \label{expexcTowers}
I knew Harold Davenport from graduate school (University
of Michigan), my second year, 1965--1966. He lectured on analytic number theory and
diophantine approximation (my initial interest), though this  included related finite field topics.  
Discussions with Armand Brumer (algebraic number theory, from whom I learned Galois theory), Donald Lewis
(diophantine properties of forms; my PhD advisor) and Andrzej Schinzel (properties of one variable
polynomials) were part of seminars I attended. MacCluer attended these, too; we overlapped two years of
graduate school. Problems formulated by Schinzel
used the topics of these discussions. 

My understanding of the literature on finding variables
separated polynomials
$f(x)-g(y)$ that factor started with
\cite{DLSf-g} and
\cite{DSf-g}. At the writing of these papers, the authors did 
not realize the equivalence between this factorization problem and Davenport's value set
problem \cite{FrRedPol}.   Within two years
from that time, I had finished that project. This used small private
lectures from John McLaughlin on permutation representations. 

Years later, I returned to these topics while writing my lecture at Andrzej
Schinzel's  birthday conference
\cite{Fr-Schconf}. I  record some points here.  

\begin{edesc} \label{davMot} \item  \label{davMota} Davenport wished (Ohio State, Spring
1966) that  confusions among polynomial ranges over finite fields received 
greater attention. 
\item  \label{davMotb}  He insisted many used Weil's theorem on zeta
functions gratuitously. 
\item  \label{davMotc}  Groups and Galois theory frustrated him. 
\end{edesc}  
Small subsections below explain each point. 

\subsection{Arithmetic uniformization and exceptional covers} \label{cryptexc} 
Exceptional covers and cryptology go together 
(\S\ref{crypt} and \S\ref{excPerms}). We would now express Davenport's concern in 
\eql{davMot}{davMota} as this:  How to detect when one isovalent DP is formed
from another by composing with exceptional covers. 
\subsubsection{\eql{davMot}{davMota}: 
Davenport's problem led to studying exceptional covers} \label{DavProb} Davenport asked
whether two polynomials could (consequentially) have the same ranges modulo $p$ for almost all primes
$p$? By consequential we mean,  no linear change of variables, even over $\bar
\bQ$, equates them (an hypothesis that we intend from this point). 
\cite{FrRedPol} restricted to having one polynomial indecomposable 
(primitive as a covering map,  \S\ref{prehist}). A first step then says they have the same  
degree. Over an arbitrary number field, there may be consequential Davenport
pairs. Yet, only for a bounded set of degrees $\{7, 11, 13, 15, 21, 31\}$.
Further (again the indecomposable case) this can't  happen over
$\bQ$. The first
result uses the simple group  classification. The second does not. For it, we need only 
the {\sl Branch-Cycle Lemma\/}  (App.~\ref{WCocycle}). 

M\"uller made a practical contribution to the genus 0 problem by listing primitive
monodromy groups of tame polynomial covers. There are three nontrivial families of
indecomposable polynomial Davenport pairs. 
\S\ref{MullerDPs} explains how these Davenport families are {\sl exactly\/} the nontrivial
families of sporadic polynomial monodromy groups. 
 {\sl Nontrivial\/} in that the pairs have a
significant variation; some {\sl  reduced deformation\/} (\S\ref{nc2}). We recount points from the detailed analysis
of \cite[\S3 and \S5]{thompson}.  \S\ref{WHist}, for example, reminds of the historical relation between the
production of Abelian varieties whose field of moduli is not a field of definition \wsp an unsolved problem at the
time \wsp and these Davenport pairs.

\subsubsection{\eql{davMot}{davMotb}: The name exceptional and eigenvalues of
the Frobenius} \label{nameExc}  
For three of our topics, exceptional covers conjure up zeta functions and
Frobenius eigenvalues that support Davenport's desire in
\eql{davMot}{davMota}. 

First: Still with genus 0 exceptional covers, we use \S\ref{monConna} to tell from whence
came  the  phrase {\sl exceptional polynomial\/}. The start was a paper
 in the long collaboration of Davenport and Lewis. \cite{dlcong1} checked, in a
hyperelliptic curve pencil, if the Weil error accumulates significantly. When it
did not, they called that case exceptional. Later they guessed  an equivalence
between their exceptionality and the conclusion of Schur's conjecture
(Prop.~\ref{basicSchur}). The latter generalizes to what we now call exceptionality.  Katz
\cite{Kz81} used
\cite{dlcong1} to discover for the same pencils that exceptionality is equivalent to
irreducible monodromy action of the base's fundamental group on the pencil fibers
(\S\ref{monConnb}). There is, however, a surprise. Katz drew conclusions on exceptional
covers for values of $t$ where, over $\bF_{q^t}$, the polynomials were as far from
exceptional as possible. This motivates topics that are now haphazard in the
literature: To inspect exceptional polynomials outside their exceptional sets, and to
consider exceptional covers of higher genus.  

Second: If $\phi:Y \to \prP^1_z$ is exceptional, then $Y$  
is {\sl e-median\/}. 
\begin{itemize} \item It is {\sl median value}: $Y(\bF_{q^t})=q^t+1$ for 
$\infty$-ly many $t$. 
\item The median value 
 exceptional set of $t$ 
contains $t=1$ (Prop.~\ref{excFibProp}). \end{itemize} 
Exceptional correspondences with
$\prP^1_z$ are examples of e-median curves (\S\ref{excCor}) that are not a' priori given by
curves from an exceptional cover like $\phi$. We characterize Davenport pairs as having
a special  pr-exceptional correspondence between their curves. A fundamental
question arises: How can we characterize curves that have an exceptional correspondence with $\prP^1_z$?
\cite[\S3.5]{FrGCov} notes the genus 1 curves with this property are supersingular. It also checks  
examples (from \cite[Prop.~14.4]{GF}) of supersingular genus 1 curves and shows they are, indeed, exceptional covers
of
$\prP^1_z$. A next step is the program of Prob.~\ref{superSingExc}. The following remark starts our
continuation in
\cite{exceptTow}:  e-median is a pure zeta function property and  not all e-median curves will have supersingular
Jacobians. 
    
Third: Suppose we have a Poincar\'e series $W_{D,\bF_q}(u)=\sum_{t=1}^\infty N_D(t)u^t$ for
a diophantine problem
$D$ over a finite field $\bF_q$. We call these {\sl Weil vectors}. (Example: One 
from a zeta function of an algebraic variety.) Assume also: $\phi_i:X\to Y$,
$i=1,2$, is an isovalent Davenport pair over $\bF_q$. If $D$ has a map to $Y$, this
Davenport pair produces new Weil vectors 
$W_{D,\bF_q}^{\phi_i}$, $i=1,2$, and a {\sl relation\/}
between $W_{D,\bF_q}^{\phi_1}(u)$ and $W_{D,\bF_q}^{\phi_2}(u)$: an infinite
set of $t$, where the coefficients of $u^t$ in 
$W_{D,\bF_q}^{\phi_1}(u)-W_{D,\bF_q}^{\phi_2}(u)$ equal 0. Producing relations 
between Weil vectors  is characteristic of isovalent DPs. \cite{exceptTow} has an effectiveness result: For
any Weil vector,  the support set of $t\in \bZ$ of 0 coefficients  differs by a
finite set from a union of full Frobenius progressions (\S\ref{frobProg}).

\subsection{History of Davenport pairs} \label{BackDPs}
{\sl Davenport pair\/} first referred to pairs $(f,g)$ of polynomials, over a
number field $K$ (with ring of integers $\sO_K$), with the same ranges on almost all residue
class fields.  Now we call that a  {\sl strong\/}  Davenport
pair (of polynomials) over
$K$. An SDP over $(Y,K)$ is a pair of covers $\phi_i: X_i\to Y$, $i=1,2$,  over $K$
satisfying {\sl Range equality}: 
\begin{triv} \label{rangeSt} 
$\phi_1(X_1(\sO/\bp))=\phi_2(X_2(\sO/\bp))$ for almost all prime ideals
$\bp$ of $\sO_K$. 
\end{triv} 

\cite{AFH} reserves the acronym DP over $(Y,K)$ to mean
equality on ranges holds for infinitely many $\bp$. An {\sl iDP\/} is then an isovalent DP
(\S\ref{DavProb} and Prop.~\ref{iDPconsts}), iSDP means isovalent SDP, etc.  

\begin{prop} \label{DPK} If $(\phi_1,\phi_2,K)$ is an iSDP for 
$\infty$-ly many $\bp$, then it is an iSDP for almost all $\bp$.
\end{prop}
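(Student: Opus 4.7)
The plan is to use Prop.~\ref{DP-prExp} to translate iSDP mod $\bp$ into a condition that does not vary with $\bp$, and then to conclude by a trivial dichotomy.

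The first step is the key reformulation. Applied over each residue field $\bF_\bp$, the first statement of Prop.~\ref{DP-prExp} together with Lem.~\ref{LemGind} says that iSDP mod $\bp$ is equivalent to the iDP condition at $t=0$; by characterization \eql{idavProp}{idavPropc} this is the character equality $\chi_{T_1^\bp}=\chi_{T_2^\bp}$ on the coset $\hat G_{(\phi_1,\phi_2),0}^\bp$, which is exactly the geometric monodromy group $G^\bp$ of the reduction. Lem.~\ref{LemGind} then upgrades this $G^\bp$-level equality to the full arithmetic monodromy $\hat G^\bp$, which is the iSDP statement. The crucial feature is that, in this packaging, the condition lives purely on the geometric monodromy.

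The second step is $\bp$-independence. For $\bp$ in the cofinite set of primes of good reduction for $\phi_1,\phi_2$ and for their common Galois closure, the geometric monodromy $G^\bp$ and the restricted permutation representations $T_i^\bp|_{G^\bp}$ identify canonically with the $K$-geometric data $G$ and $T_i|_G$; only the extension-of-constants quotient $\hat G^\bp/G^\bp$ and its Frobenius coset vary with $\bp$. Thus the criterion $\chi_{T_1^\bp}|_{G^\bp}=\chi_{T_2^\bp}|_{G^\bp}$ reduces to the single $\bp$-independent statement $\chi_{T_1}|_G=\chi_{T_2}|_G$. This either holds or fails as a statement about the pair $(\phi_1,\phi_2)$ over $K$; if it fails, iSDP mod $\bp$ fails at every good-reduction $\bp$, contradicting the hypothesis of infinitely many iSDP primes. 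Hence it holds, yielding iSDP at every good-reduction $\bp$, which is almost all $\bp$.

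The main obstacle is not a difficult estimate but rather the care needed in the first step: one must verify that the specific induced-representation setup underlying $T_1,T_2$ fits the hypotheses of Lem.~\ref{LemGind}, so that character equality on $G$ really upgrades to equality on all of $\hat G$. For this one uses absolute irreducibility of $X_1,X_2$, which forces $\hat H_i\cdot G=\hat G$ for the sheet-stabilizer $\hat H_i$ of $T_i$, and thereby lets one realize $T_i|_G=\ind_{H_i}^G(1)$ with $H_i=\hat H_i\cap G$ --- precisely the induced-from-a-subgroup-of-$G$ form Lem.~\ref{LemGind} requires. Once this is set up, the $\bp$-uniformity observation closes the argument.
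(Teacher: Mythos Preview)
Your proposal is correct and follows essentially the same route as the paper's proof: both reduce the iSDP condition modulo $\bp$ to the character equality $\chi_{T_1}=\chi_{T_2}$ on the geometric monodromy $G$ via Lem.~\ref{LemGind}, then observe that for primes of good reduction this geometric-monodromy condition is independent of $\bp$ (the paper cites \cite[Lem.~19.27]{FrJ} for the identification of the reduction monodromy pair with the $\hat K^\sigma$ monodromy pair, where you speak of good reduction more informally). Your extra care in verifying that $T_i$ is genuinely induced from a subgroup of $G$ --- using absolute irreducibility of the $X_i$ to get $\hat H_i\cdot G=\hat G$ --- is a worthwhile clarification the paper leaves implicit.
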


\begin{proof} Use notation of \S\ref{davPairs}, with extra
decoration indicating the  base field. For $|\bp|$ large, let  
$\sigma\in G(\hat K/K)$  be a choice of Frobenius for the prime $\bp$. Then, we can 
identify two  geometric-arithmetic  monodromy group pairs \cite[Lem.~19.27]{FrJ}: 
$$(G_{(\phi_1,\phi_2),\sO/\eu{p}},\hat G_{(\phi_1,\phi_2),\sO/\eu{p}})\text{ and }
(G_{(\phi_1,\phi_2),\hat K^\sigma},\hat G_{(\phi_1,\phi_2),\hat K^\sigma}).$$ Restrict
to such $\bp$. Then,
$E_{(\phi_1,\phi_2),\sO/\bp}=\bN^+$ if and only if
$(\phi_1,\phi_2,\sO/\bp)$ is an SDP. 

Lem.~\ref{LemGind} shows this is equivalent to the representation pair  $(T_1,T_2)$   
giving equivalent representations on $G_{(\phi_1,\phi_2),\hat K}$, a condition independent
of
$\bp$. So, excluding finitely many $\bp$, this holds either for all or none of the $\bp$.
\end{proof}

\renewcommand{\labelenumi}{{{\rm (\teql \alph{enumi})}}} 

\begin{appendix}

\section{Review of Nielsen classes} \label{nc1} When $Y=\prP^1_z$, a Nielsen class is
a combinatorial invariant attached to the cover. Suppose $\bz$ is the branch point
set of
$\phi$,
$U_\bz=\prP^1_z\setminus
\{\bz\}$ and $z_0\in U_\bz$. Consider analytic continuation of the points over $z_0$ along
paths based at $z_0$, of the form $\gamma\cdot \delta_i\cdot \gamma^{-1}$, $\gamma, \delta$
on
$U_\bz$ and $\delta_i$ a small clockwise circle around $z_i$. This gives  a
collection of conjugacy classes $\bfC=(\row {\text{C}} r\}$, one for each $z_i\in \bz$, in 
$G_\phi$. The associated  {\sl  Nielsen  
class\/}: \begin{equation} \label{NCDef} \ni=\ni(G,\bfC)=\{\bg= (\row g r) \mid g_1\cdots
g_r=1, 
\lrang{\bg}=G \text{\ 
and\ } \bg\in \bfC\}.\end{equation}  Writing $\bg\in\bfC$ means  the 
$g_i\,$s, in some order, define the same conjugacy classes in $G$  (with multiplicity) as 
those in 
$\bfC$. We call the respective conditions $g_1\cdots g_r=1$ and $\lrang{\bg}=G$, the {\sl
product-one\/} and {\sl generation\/} conditions. Each cover
$\phi: X\to\prP^1_z$ has a uniquely attached Nielsen class: 
$\phi$ is in the Nielsen class $\ni(G,\bfC)$. We give examples in \S\ref{DickMon}. The examples of the degree 7, 13
and 15 degree Davenport pairs in \cite[\S5]{thompson} can give a reader a full taste of why even
polynomial covers require RET. The point is that these three examples are the most significant of the
0-sporadic polynomial covers. The reduced spaces parametrizing these covers are each genus 0 curves defined over
$\bQ$. Each is a  (non-modular curve)  $j$-line cover \cite[Prop.~4.1]{thompson}. These
facts come directly from using Nielsen classes. 

\subsection{Inner and absolute Nielsen classes } \label{nc1}
Suppose we have  $r$ (branch) points $\bz$, and a corresponding choice $\bar \bg$ of {\sl classical 
generators\/} for 
$\pi_1(U_\bz,z_0)$ \cite[\S1.2]{BFr}. Then, $\ni(G,\bfC)$ lists all 
surjective homomorphisms $\pi_1(U_\bz,z_0)\to
G$ with local monodromy in $\bfC$ given by 
$\bar g_i\mapsto g_i$, $i=1,\dots,r$. Each gives a cover with branch points $\bz$ 
associated to
$(G,\bfC)$. The 
$\bg\in \ni(G,\bfC)$ are {\sl branch cycle descriptions\/} for these covers relative to
$\bar\bg$.   Equivalence classes of 
covers with fixed branch points $\bz$ correspond one-one to equivalence classes on 
$\ni(G,\bfC)$. Caution: Attaching a Nielsen class
representative to a cover requires picking one from many possible
$r$-tuples $\bar \bg$. It is not an algebraic process.

\cite[\S3.1]{BFr}  reviews
common  equivalences with examples and relevant definitions, such as the group
$\sQ''$ below. Let
$N_{S_n}(G,\bfC)$ be those $g\in S_n$ normalizing $G$ and permuting the
collection of conjugacy classes in $\bfC$. Absolute (resp.~inner) equivalence classes of
covers (with branch points at
$\bz$) correspond to the elements of
$\ni(G,\bfC)/N_{S_n}(G,\bfC))$ (resp.~$\ni(G,\bfC)/G$). 
\cite[\S5]{thompson} uses {\sl absolute} and {\sl inner\/} (and for each of these {\sl reduced\/})
equivalence. These show how to compute specific properties of manifolds
$\sH(G,\bfC)^\abs$, $\sH(G,\bfC)^\inn$ and their reduced versions, parametrizing the
equivalences classes of covers as $\bz$ varies. Orbits of the Hurwitz monodromy group $H_r$ 
on the respective absolute and inner Nielsen classes determine components of these spaces. Here is the $H_r$ action
using generators $\row q {r-1}$ on $\bg\in \ni(G,\bfC)$:  
\begin{equation} \label{HurMon} q_i: \bg=(\row g r) \mapsto (\row g {i-1},
g_ig_{i+1}g_i^{-1},g_i,g_{i+2},\dots,g_r).\end{equation} 

\subsection{Reduced Nielsen classes when $r=4$} \label{nc2}  {\sl Reduced equivalence\/}
of covers equivalences a cover of $\prP^1_z$, 
$\phi: X\to
\prP^1_z$, with any cover 
$\alpha\circ \phi: X\to \prP^1_z$ from composing $\phi$ with $\alpha\in \PGL_2(\bC)$. This makes sense for
covers with any number $r$ of branch points, though the case $r=4$ has classical motivation. Then, the
$\PGL_2$ action associates to the branch point set $\bz$ a $j$-invariant. You can think of it as the $j$-invariant
of the genus 1 curve mapping 2-to-1 to $\prP^1_z$ and branched at $\bz$. The branch point set
$\bz$ of a cover is {\sl elliptic\/} if it  equals that of an
elliptic curve with automorphism group of order larger than 2. 

We now review from \cite[\S2.6 and \S3.7.2]{BFr} how Nielsen classes describe the collection of reduced classes of
covers up to  inner or absolute equivalence that have a particular non-elliptic value of $j$ as their invariant.
Indeed, this set is just the inner or absolute Nielsen classes modulo an action of a quaternion group $\sQ\le H_4$
on the respective Nielsen classes. The action of $\sQ=\lrang{(q_1q_2q_3)^2,q_1q_3^{-1}}$ (using
\eqref{HurMon}) factors through a Klein group action
$\sQ''$. This arises from there always being a Klein 4-group ($\cong \bZ/2\times\bZ/2$) in
$\PGL_2(\bC)$ leaving the branch point set $\bz$ fixed. (An
even larger group leaves elliptic $\bz$ fixed.) Then, absolute reduced and
inner reduced equivalence have respective representatives  
$$\ni(G,\bfC)/\lrang{N_{S_n}(G,\bfC),\sQ''}\text{ and }
\ni(G,\bfC)/\lrang{N_{S_n}(G,\bfC),\sQ''}.$$   
When $r=4$,
these give formulas for branch cycles presenting
$\sH(G,\bfC)^{\abs,\rd}$ and $\sH(G,\bfC)^{\inn,\rd}$ as quotients of the upper half plane
by a finite index subgroup of $\PSL_2(\bZ)$ as a ramified cover of the classical $j$-line.
These  branch over the traditional places (normalized in \cite[Prop.~4.4]{BFr} to 
$j=0,1,\infty$) with the points over $\infty$ {\sl meaningfully\/}  called cusps.  

\cite[\S4]{thompson} has many examples of this. 
For example:  \cite[Prop.~4.1]{thompson}  uses these tools to 
produce a genus 0 $j$-line cover (dessins d'enfant) defined over $\bQ$ that parametrizes the pairs $(f,g)$ 
of  reduced classes of degree 7 Davenport polynomial pairs. As a parameter space for the 1st (resp.~2nd) coordinate
$f$ (resp.~$g$) the two families are defined and conjugate over 
$\bQ(\sqrt{-7})$. 

A cover (over $K$) in the Nielsen class 
$\ni(G,\bfC)$ with arithmetic monodromy group $\hat G$ is 
a $(G,\hat G,\bfC)$ {\sl realization\/} (over $K$). 

\subsection{Algebraist's branch cycles} \label{algBrCyc} Grothendieck's Theorem
\cite{Gr} gives us branch cycles for any tame cover, even in positive
characteristic. We state its meaning (\cite[Chap.~4, Prop.~2.11]{FB} has 
details). Consider a perfect algebraically closed field $\bar F$. For $z'\in
\prP^1_z(\bar F)$ and $e$ a positive integer prime to $\text{char}(\bar K)$, denote the
field of Laurent formal series 
$\bar F(((z-z')^{1/e}))$ by 
$\sP_{z,e}$. We choose a
compatible set $\{\zeta_e\}_{\{e | (e,\text{char}(\bar K)=1)\}}$ of roots of 1. Let
$\sigma_{z',e}:
\sP_{z,e}\to \sP_{z,e}$  be the automorphism (fixed on $\bar K((z-z'))$) that acts by 
$(z-z')^{1/e}\mapsto \zeta_e(z-z')^{1/e}$. Let $\bz=\{\row z r\}$ be $r$ distinct points of $\prP^1_z$. 

\begin{prop}[Algebraist branch cycles] \label{algRET} Assume  
$\hat L$ is the Galois closure of a tamely ramified extension $L/\bar
F(z)$ having branch points
$\bz$. Then there are embeddings $\psi_i: \hat L\to \sP_{z_i,e_i}$ with
$e_i$ the ramification index of $\hat L$ over $z_i$ satisfying this. The restrictions
$g_{z_i,\psi_i}\in G_f$ of $\sigma_{z_i,e_i}$ to $\hat L$, $i=1,\dots,r$, have the
generation and product-one  properties  \eqref{NCDef} 
\cite[Chap.2~\S7.5]{FB}. 
\end{prop}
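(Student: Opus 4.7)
The plan is to reduce to Grothendieck's comparison theorem between the tame fundamental group of $\prP^1_{\bar F} \setminus \bz$ in characteristic $p$ and the prime-to-$p$ completion of the topological fundamental group in characteristic $0$. Set $G = G(\hat L/\bar F(z))$, a finite group whose order is prime to $\text{char}(\bar F)$ since $\hat L/\bar F(z)$ is tame. The Galois closure corresponds to a continuous surjection $\rho\colon \pi_1^{t}(\prP^1_{\bar F}\setminus \bz) \twoheadrightarrow G$. By Grothendieck's specialization theorem (SGA 1, Exp.~XIII), this tame fundamental group is isomorphic to the prime-to-$p$ completion of $\lrang{\bar\gamma_1,\dots,\bar\gamma_r \mid \bar\gamma_1\cdots\bar\gamma_r = 1}$, where $\bar\gamma_i$ is the image of a classical loop around $z_i$.

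Next, for each $z_i$, pick a place $v_i$ of $\hat L$ above $z_i$; completion at $v_i$ yields an embedding $\psi_i\colon \hat L \hookrightarrow \hat L_{v_i}$. Since $\bar F$ is algebraically closed, the decomposition group at $v_i$ coincides with the inertia group $I_{v_i}$, which by tameness is cyclic of order $e_i$. Kummer theory, together with the compatible choice of roots of unity $\{\zeta_e\}$, identifies $\hat L_{v_i}$ with a subfield of $\sP_{z_i,e_i} = \bar F(((z-z_i)^{1/e_i}))$ on which $\sigma_{z_i,e_i}$ acts as a distinguished generator of $I_{v_i}$. Restricting $\sigma_{z_i,e_i}$ to $\psi_i(\hat L)$ then defines $g_{z_i,\psi_i} \in G$.

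The main work is to choose the places $v_i$ so that the tuple $(g_{z_1,\psi_1},\dots,g_{z_r,\psi_r})$ satisfies the product-one relation. This is what Grothendieck's comparison supplies: under the isomorphism above, each inertia generator $\bar\gamma_i$ maps to a representative of the conjugacy class of $g_{z_i,\psi_i}$ for an appropriate choice of $v_i$, and the relation $\bar\gamma_1\cdots\bar\gamma_r=1$ is preserved by $\rho$. Concretely, one lifts the tame cover to characteristic $0$ over a Henselian base (e.g.\ $W(\bar F)[[T]]$), applies the classical Riemann Existence Theorem on the geometric generic fiber to obtain classical generators with product one, and then specializes the resulting elements of the Galois group back to characteristic $p$, using that the prime-to-$p$ part of the fundamental group is invariant under this specialization.

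The hard part is to ensure coherent compatibility of the embeddings $\psi_i$: arbitrary choices of the $v_i$ only produce product-one up to simultaneous conjugation and reordering, so a coherent system of classical generators must be transported from characteristic $0$ to characteristic $p$. Once that is arranged, the generation condition is automatic: the image $\rho(\pi_1^t)$ is all of $G$ by definition of the Galois closure, and this image is generated by inertia above the branch points --- hence by $g_{z_1,\psi_1},\dots,g_{z_r,\psi_r}$ after replacing each by a conjugate if necessary (which does not affect the Nielsen class $\ni(G,\bfC)$).
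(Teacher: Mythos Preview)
The paper does not prove this proposition; it only cites Grothendieck \cite{Gr} and \cite[Chap.~2, \S7.5; Chap.~4, Prop.~2.11]{FB} for the argument. Your outline---lift the tame cover to characteristic $0$ via Grothendieck's specialization, apply the classical Riemann Existence Theorem there, and transport the resulting branch cycles back---is exactly the strategy those references carry out, so in spirit you match the intended proof.

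There is, however, a genuine error in your reduction. You assert that $|G|$ is prime to $\text{char}(\bar F)$ \emph{because} $\hat L/\bar F(z)$ is tame. That is false: tameness only forces the inertia subgroups to have order prime to $p$, not the whole Galois group. Correspondingly, your claim that $\pi_1^{t}(\prP^1_{\bar F}\setminus\bz)$ \emph{is isomorphic to} the prime-to-$p$ completion of $\lrang{\bar\gamma_1,\dots,\bar\gamma_r\mid \bar\gamma_1\cdots\bar\gamma_r=1}$ is also false; SGA~1, Exp.~XIII gives only a surjection from the characteristic-$0$ tame fundamental group onto the characteristic-$p$ one (with isomorphism on prime-to-$p$ quotients). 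The paper itself flags this distinction: the sentence after the proposition separates the existence of branch cycles (tameness alone) from the full Nielsen-class correspondence, which \emph{does} require $(|G|,\text{char}(\bar F))=1$.

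The fix is already implicit in your ``concretely'' sentence: you do not need $|G|$ prime to $p$ at all. Surjectivity of the specialization map $\pi_1^{t}(U_0)\twoheadrightarrow\pi_1^{t}(U_p)$ is enough to lift the given tame $G$-cover to characteristic $0$; there the classical generators $\bar\gamma_i$ of $\pi_1(U_0)$ map to inertia generators $g_i\in G$ satisfying product-one and generation, and these specialize to the $g_{z_i,\psi_i}$ for suitable choices of the places $v_i$. So drop the two false assertions and run your lifting argument directly. Also, your closing remark about ``replacing each by a conjugate if necessary'' for generation is unnecessary and risks breaking product-one: once the $g_i$ come from images of a fixed system of classical generators, they already generate $G$ because $\rho$ is surjective.
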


Suppose given $r$ distinct points on $\prP^1_z$. Then, any set of classical generators
(as in \S\ref{nc1}) of $\pi_1(U_\bz,z_0)$  produces the collection $\bg=(\dots, g_{z_i,\psi_i}, \dots)$
for all covers in Prop.~\ref{algRET}. These are also compatible, in the following sense.
Given branch cycles for
$\phi:X\to \prP^1_z$ appearing in a chain $\psi: X\mapright{\phi'}X'\to \prP^1_z$, this uniquely
gives branch cycles of $\phi'$ (dependent on $\psi$). 

Also, we explain how fiber products
alone give a notion of compatibility without any appeal to paths.  Let $\phi_i: X_i\to \prP^1_z$ and assume $\phi: X
\to \prP^1_z$ is a cover defined by a $\bar F$ component $X$ of
$X_1\times_{\prP^1_z}X_2$. Suppose $\bg_i$ is a branch cycle description for $\phi_i$,
$i=1,2$. We say $\bg_1$ and $\bg_2$ are compatible if there are branch cycles 
$\bg$ for $\phi$ that restrict to $\bg_i$ on $\phi_i$, $i=1,2$, as in Prop.~\ref{algRET}.
Note: Referencing branch cycles gives meaning to the Nielsen class (any type) of
a tame cover in any characteristic. If we want to compare branch cycle descriptions of
a finite set of tamely ramified covers over $\prP^1_z$, we may take their fiber products
and a branch cycle description of a cover that dominates them all.

Suppose $\ni(G,\bfC)$ defines some Nielsen class (say absolute or inner; $r$ conjugacy classes). The
rest of Grothendieck's theorem requires 
$(|G|,\text{char}(\bar K))=1$. Then we interpret it as follows.  Given
$\bz$,
$r$ distinct points on
$\prP^1_z(\bar F)$, equivalence classes of covers in the Nielsen class with branch
points $\bz$ have a compatible set of branch cycle descriptions that correspond one-one 
with the Nielsen class representatives.

\section{Weil's cocycle condition and the Branch Cycle Lemma} \label{WCocycle} 
Often we apply Nielsen classes
to problems asking about the realization of covers over $\bQ$ or some variant like
 $(G,\hat G, \bfC)$ realization problems (\S\ref{nc2}). 

\subsection{The Branch Cycle Lemma story} \label{BCL} Realization problems, according to the  \BCL, 
require  $\bfC$, conjugacy classes in $G\le N_{S_n}(G,\bfC)\le S_n$, to be {\sl rational}. It
is now a staple of the theory of covers. 

\begin{defn} \label{ratUnion} Let $G^*$ be a group between $G$ and $N_{S_n}(G,\bfC)$.
Suppose for each integer
$k$ prime to the orders of elements in
$\bfC$, there is $h=h_k\in G^*$ and $\pi\in S_r$ so that we have the identity
$h\C_{(i)\pi}h^{-1}=\C_{i}^k$, $i=1,\dots,r$, in conjugacy classes. Then, $\bfC$ is a rational
union of conjugacy classes $\mod G^*$. \end{defn}

For this special case of \cite[Thm.~5.1]{FrHFGG}, the {\sl Branch Cycle Lemma\/} (BCL)
says $\bfC$ is a rational union of conjugacy classes $\mod G'$ is a necessary condition for
a $(G,G'',\bfC)$ realization with $G\le G''\le G'$. 

Some version of the BCL and Weil's cocycle
condition is now standard to determine when 
equivalence classes of covers have equations over the smallest possible field
one could expect for that. Though standard, getting it there required
getting researchers to master the notion of Nielsen class. For
example, in the special case mentioned above of Davenport pairs,  the BCL was the main tool in
\cite[\S3]{FrRedPol}. 
\cite{FrHFGG} proved converses of the conclusion of the BCL, by formulating
{\sl Braid rigidity\/} (though not calling it that). In 
 \cite{thompson} examples \wsp giving complete details on  the parameter spaces of Davenport
pairs of indecomposable polynomials over number fields \wsp the Braid Rigidity hypothesis holds and we apply
the converse. 

\subsection{Weil's cocycle condition and its place in the literature}  \S\ref{WWorks} explains how Weil's cocycle
condition works for families of covers, then \S\ref{WHist} tells some history behind it. 

\subsubsection{How the co-cycle condition works} \label{WWorks} 
Suppose
$\phi: X\to Y$ is a cover with  $Y$ embedded in some ambient projective space over a perfect
field $F$ and $X$ similarly embedded in a projective space over $\bar F$. Then, consider
$G_\phi$: $$\sigma\in G(\bar F/F)\text{ for which there exists }\psi_\sigma:
X^\sigma\to X\text{ so  }\phi\circ \psi_\sigma
=\phi^\sigma.$$ Denote the fixed field of
$G_\phi$ in $\bar F$ by $L_\phi$.

\begin{prop} Assume also, there is no isomorphism $\psi: X\to X$ that commutes with
$\phi$. Then, there is a cover
$\phi': X'\to Y$ with $L_\phi$ a field of definition of $X'$ and $\phi'$, and an
isomorphism  $\psi': X'\to X$ with $\phi\circ \psi'=\phi'$. \end{prop}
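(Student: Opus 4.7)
The plan is to verify Weil's cocycle condition for the family $\{\psi_\sigma\}_{\sigma\in G_\phi}$, apply Weil's descent theorem to produce an $L_\phi$-form $X'$ of $X$, and then observe that $\phi$ descends along with $X$ to a morphism $\phi'\colon X'\to Y$ over $L_\phi$.

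First I verify uniqueness of $\psi_\sigma$. If $\psi_\sigma,\psi_\sigma'\colon X^\sigma\to X$ both satisfy $\phi\circ\psi_\sigma=\phi^\sigma=\phi\circ\psi_\sigma'$, then $\psi_\sigma\circ(\psi_\sigma')^{-1}$ is an automorphism of $X$ with $\phi\circ\bigl(\psi_\sigma\circ(\psi_\sigma')^{-1}\bigr)=\phi$, so by the no-automorphism hypothesis it is the identity. With uniqueness in hand, the cocycle relation $\psi_{\sigma\tau}=\psi_\sigma\circ\psi_\tau^\sigma$ falls out of a direct check:
$$\phi\circ\bigl(\psi_\sigma\circ\psi_\tau^\sigma\bigr)=\phi^\sigma\circ\psi_\tau^\sigma=(\phi\circ\psi_\tau)^\sigma=\phi^{\sigma\tau},$$
so $\psi_\sigma\circ\psi_\tau^\sigma=\psi_{\sigma\tau}$ by uniqueness applied to $\sigma\tau$.

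Next I invoke Weil's descent theorem. Because $X$ is quasi-projective (the standing hypothesis of \S\ref{introGp}) and $\{\psi_\sigma\}_{\sigma\in G_\phi}$ is a Galois cocycle, descent produces a quasi-projective variety $X'$ over $L_\phi$ together with an $\bar F$-isomorphism $\psi'\colon X'\to X$ satisfying $\psi'=\psi_\sigma\circ(\psi')^\sigma$ for every $\sigma\in G_\phi$. Setting $\phi'=\phi\circ\psi'\colon X'\to Y$, I compute
$$(\phi')^\sigma=\phi^\sigma\circ(\psi')^\sigma=\phi\circ\psi_\sigma\circ(\psi')^\sigma=\phi\circ\psi'=\phi'$$
for every $\sigma\in G_\phi$, so $\phi'$ is $G_\phi$-invariant. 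Since $Y$ is already defined over $F\subset L_\phi$, this invariance forces $\phi'$ itself to be defined over $L_\phi$, completing the construction.

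The main technical obstacle is ensuring that $G_\phi$ is closed in $G(\bar F/F)$ so that $L_\phi$ is a genuine intermediate field and Weil's descent applies over it. This is handled by choosing a finite Galois extension $F'/F$ over which $X$, $\phi$, and each of the (finitely many, up to conjugation) $\psi_\sigma$ are already defined; then $G_\phi$ is a union of cosets of $G(\bar F/F')$, hence open and closed, and $L_\phi/F$ is finite. Once this topological point is cleared, the rest reduces to standard effective descent in the quasi-projective setting.
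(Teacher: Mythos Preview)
Your argument is correct and follows essentially the same route as the paper: use the no-automorphism hypothesis to force uniqueness of the $\psi_\sigma$, deduce the Weil cocycle condition, and invoke Weil's descent theorem. The only cosmetic difference is that the paper packages $(X,\phi)$ together as a single object in ambient projective space and applies \cite{weil} to the pair at once, whereas you descend $X$ first and then separately verify that $\phi'=\phi\circ\psi'$ is $G_\phi$-invariant; your extra care about the closedness of $G_\phi$ is a detail the paper leaves implicit.
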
 

\begin{proof} Regard the pairs $\{(X^\sigma, \phi^\sigma)\}_{\sigma\in G(\bar F/F)}$
as a subvariety of some ambient projective space. 
Then, $\psi_\sigma$ induces an isomorphism $(X^\sigma, \phi^\sigma)\to (X,\phi)$, and this
gives an isomorphism $\psi_\tau\circ \psi_\sigma^{-1}=\psi_{\sigma,\tau}: (X^\sigma,
\phi^\sigma)\to (X^\tau,\phi^\tau)$. That there is  no automorphism $\psi:
X\to X$ that commutes with
$\phi$ implies that for $\sigma, \tau, \gamma\in G_\phi$, $$\psi_{\tau,\gamma}\circ
\psi_{\sigma,\tau}=\psi_{\sigma,\gamma}.$$ This is the co-cycle condition attached to our
situation. 

The conclusion is the existence of an actual pair $(X',\phi')$ over $L_\phi$ by applying \cite{weil}. Examples with
the covers represented by polynomials appear in \cite[\S4 and \S5]{thompson} with, typical of its use, 
a much stronger conclusion: The whole family of covers in a Nielsen class has definition field 
$\bQ$. 
\end{proof}

\subsubsection{Some history of applying the co-cycle condition to families of covers} \label{WHist} 
I learned the Weil cocycle condition from the 1961 version of \cite[p.~27]{shimura}
when  I learned complex multiplication
studying with Shimura during my years '67--'69 at IAS. I showed Shimura the
BCL, and the effect of applying the Weil cocycle condition to the arithmetic
of covers. In particular, I showed its application to Davenport pairs. This produced
curves with field of moduli $\bQ$ not equal to their field of definition. Those first
curves   were the Galois closures of Davenport pairs $(f,g)$, such as those
of degree 7  over $\bQ(\sqrt{-7})$. 

As in \cite[Prop.~3]{FrRedPol}, the  arithmetic Galois
closures $\hat X$  of the covers from $f$ and $g$ 
are the same, and the BCL showed 
$f$ and $g$ are conjugate. So, the field of moduli of $\hat X$ as a Galois extension of
$\prP^1_z$ is $\bQ$ (an inner equivalence class as in \S\ref{nc1}): The field of moduli of
the cover together with its automorphisms. If, however, $\bQ$ were 
its field of definition, then the subgroups corresponding to the covers given by $f$ and
$g$ would also be over $\bQ$. So, the field of definition for this equivalence of covers is
not $\bQ$. It is easy to show the full automorphism group of $\hat X$ in
this case is $\PGL_3(\bZ/2)$ together with its diagram automorphism, and from that to
conclude the field of moduli of
$\hat X$ is not a field of definition for it. 

Shih's paper \cite{shih}, with
some version of the BCL, was in print before
\cite{FrHFGG} (though not before \cite{FrRedPol}). Some authors have revised the
situation of its priority, saying the results were done independently. 

\cite{FrHFGG} was half of an original paper that was in Shimura's hands by Fall of 1971. It was  broken apart in
Spring of 1972 when I was again at IAS.  Shimura  sent Shih to visit me when I was
at MIT, fall 1971, on a Sloan. This resulted from Shimura
asking me to give an elementary approach to canonical fields of definition.  My answer
was the Hurwitz space approach, using the BCL, and applying it in
particular to modular curves in \cite{FrGGCM} (the other half of the 1971 preprint). I said I would quote
\cite{shih}, and he could use the BCL if he said from where he got it. I did my part. He
did not. 

%

\section{DPs and the genus 0 problem} \label{MullerDPs}  Davenport phrased his problem starting over $\bQ$ and at
least for indecomposable polynomials, \cite[Thm.~2]{FrRedPol} showed it was true: Two polynomials $f,g\in \bQ[x]$
with the same ranges modulo almost all primes $p$ are linearly related: $f(ax+b)=g(x)$ for some $a,b\in \bar \bQ$. 
Because of indecomposability, we actually may take $a,b\in \bQ$ (Rem.~\ref{QvsbQ}). \S\ref{MuelList}  is a
complement to
\cite[\S4 and \S5]{thompson}. 

We consider indecomposable polynomial DPs over a number
field
$K$. These are essential cases in the genus 0 problem. The polynomials that arise in serious arithmetic problems
aren't generic. So, in continuing \S\ref{nonSporadics} we show how Davenport's Problem relates to  0-sporadic
polynomials. M\"uller's Theorem in this direction is a gem from my view for two reasons. It shows how truly
significant Davenport Pairs were to this direction, and it is easy to understand.

\subsection{M\"uller's list of primitive polynomial monodromy and DPs}
\label{MuelList} Suppose $(f,g)$ is a DP over a number field  $K$
($f,g\in K[x])$.    We always assume $(f,g)$ are not affine equivalent.  Lem.~\ref{basicSchur} says
that $f$ indecomposable translates to $f:\prP^1_x\to\prP^1_z$ having doubly transitive
geometric monodromy. In particular it says $f$ is not exceptional. 
\cite[Cor.~7.30]{AFH} showed $g=g_1(g_2(x))$ is a decomposition (over $K$) with $(f,g_1)$ an
iSDP.

\subsubsection{The three 1-dimensional reduced spaces of 0-sporadic polynomial covers} \label{3-1dim}  You don't have
to be a group theorist to read the list from \cite{primPol} of  primitive
polonomial groups that are not cyclic, dihedral,
$A_n$ or
$S_n$.  

Our version of M\"uller's list shows how pertinent was Davenport's  problem.   
All appearing groups are almost simple (\S\ref{enthCrypt}). Exclude  those (finitely many)
that normalize  
$\PSL_2(\bF_q)$ (for very small $q$) and the degree 11 and 23 Matthieu groups. Then,
all  remaining  $G$ are from 
\cite{FrRedPol} and they have these objects.       
\begin{edesc} \item Two inequivalent doubly transitive representations, 
equivalent as  (degree $n$) group representations; and    
\item  an 
$n$-cycle (for these  representations). \end{edesc} We know such groups. There is one   
of degree 11. The others  are Chevalley groups that normalize 
$\PSL_{u+1}(\bF_q)$ (acting on points and hyperplanes of $\prP^u$).
\cite[\S9]{Fr-Schconf} reviews and completes this. All six  (with corresponding
Nielsen classes) give  Davenport pairs. 
We concentrate on those three with one extra
property:   
\begin{trivl} \label{dim1mod} Modulo
$\PGL_2(\bC)$ (reduced equivalence as in \S\ref{nc2})  action, the space of these
polynomials has dimension at least (in all cases, equal) 1.  \end{trivl}

These properties hold for sporadic 
polynomial maps with $r\ge 4$ branch points. 
\begin{itemize} \item They have degrees  
from
$\{7,13,15\}$ and $r=4$.
\item All  $r\ge 4$ branch point indecomposable polynomial maps in an iDP pair are in one
of the respectively, 2, 4 or 2 Nielsen classes corresponding to the respective degrees 7,
13 and  15.
\end{itemize} 
\cite{FrRedPol} outlines this. 

\cite[\S8]{Fr-Schconf}, 
\cite{MeuDavI} and
\cite[\S2.7]{primPol} say much on the group theory of the indecomposable
polynomial SDPs over number fields. Yet, we now say something new on the definition
field of these families, a subtlety on dessins d'enfant, presented as genus 0
$j$-line covers.   Let 
$\sH_7^\D$, $\sH^\D_{13}$ and $\sH^\D_{15}$ denote the spaces of polynomial covers that are
one from a Davenport pair having four branch points (counting
$\infty$). The subscript decoration corresponds to the respective
degrees. We assume absolute, reduced equivalence (as in \S\ref{nc2}). 
Then, all these spaces are irreducible and defined over $\bQ$ as covers of the $j$-line. Each $\sH_n^\D$
is labeled by a difference set modulo $n$, $n=7,13,15$, and there is an action of $G_{\bQ}$ on the the difference
sets (modulo translation) \cite[\S2.3]{thompson}.

In these cases, analytic families of respective degree $n$ polynomials fall into
several components ($\sH_7^\D$ are those of degree 7). Yet, each component corresponds to
a unique Nielsen class and a particular value of $D$.  We understand these Nielsen classes and the definition fields
of these components from the BCL. 

\begin{rem}[Linearly related over $\bQ$ versus over $\bar \bQ$] \label{QvsbQ} The comments on
proof in Prop.~\ref{ordSchurT} note the degree $n$ Chebychev polynomial $T_n$  gives all Dickson polynomials by
composing with linear fractional transformations in the form  $l_u\circ T_n\circ l_{u^{-1}}$. All Dickson
polynomials of degree $n$ over a given finite field have the same exceptional polynomial behavior and branch cycle
descriptions placing them in one family. Whether you see them as significantly different depends on your
perspective. I tend to downplay this, though there are times it is worthy to consider.  

\cite[Thm.~2]{FrRedPol} {\sl does\/} have the  conclusion that indecomposable DPs over $\bQ$ are linearly related
over $\bQ$. Still, there are elementary examples of (composable) Davenport pairs, linearly related over $\bar \bQ$
and not over
$\bQ$. Davenport likely knew those for he used the same examples elsewhere: $(h(x^8),h(16x^8))$ with $h\in \bQ[x]$
are a Davenport pair, linearly related over $\bar \bQ$ \cite[Rem.~21.6.1]{FrJ4}. 
\end{rem}

\subsubsection{Masking} Consider the statement in the paragraph starting \S\ref{MuelList}. 
 One possibility not yet excluded for $(f,g_1)$ from
\cite[Cor.~7.30]{AFH} is that
$g_1$ is affine equivalent to $f$, and yet $g_2$ is not exceptional. 

This  has
an analog over a finite field. Possibly
$g$ and
$g\circ g_1$ have precisely the same range for
$\infty$-ly many residue classes of a number field (or extensions $\bF_{q^t}$) even though
$g_1$ is not exceptional. (\cite{FrRedPol}, for example, shows this can't be if $f$
and $g_1$ have the same ranges on almost all residue class fields, or on all
extensions of
$\bF_q$.) 
 
\cite[Def.~1.3]{AFH} calls this possibility an example of  {\sl masking}. \cite[\S
4]{MeuDavI} found a version of it, motivating our name.  

\subsection{Print version miscues in \cite{thompson}} Here are several typographical difficulties in the
final version of \cite{thompson}, though not in the files I sent the publishers.  

\newcommand{\Prob}[1]{\text{Problem$_{#1}^{\!\scriptscriptstyle{g=0\!}}$}}
\begin{itemize} \item Expressions  $\Prob n$ (for $n=1$ and 2 representing two
distinct problems John Thompson considered) appear  as $\Prob 0 n$. 

\item Throughout the manuscript, whenever a reference is made to an expression in a section or subsection, the
reference came out to be a meaningless number. So \S3.2 titled: Difference sets give properties
(3.1a) and (3.2b), had those last two references appear as (91) and (92). We follow this pattern in the other
cases, labeling the sections and giving the changes in the form  (91) $\mapsto$ (3.1a) and (92) $\mapsto$ (3.2b). 

\S3.3: (92) $\mapsto$ (3.1b).

\S5.2.1: (171) $\mapsto$ (5.3a)

\S 5.2.2 (172)  $\mapsto$ (5.3b)

\S 5.2.3 (172)  $\mapsto$ (5.3b)
\end{itemize} 

\end{appendix}

\providecommand{\bysame}{\leavevmode\hbox to3em{\hrulefill}\thinspace}

\end{document}